\newtheorem{thm}{Theorem}[section]
\newtheorem{lem}[thm]{Lemma}
\newtheorem{prob}{Problem}
\newtheorem{cor}[thm]{Corollary}
\newtheorem*{thma}{Theorem A}
\renewcommand{\comment}[1]{}
\renewenvironment{proof}{\noindent {\it Proof.}}{$\Box$\\}
\newcommand{\dontshow}[1]{}
\newcommand{\mad}{{\rm Mad}}
\newcommand{\bp}{{\rm b}}
\newcommand{\ov}{\overline}
\begin{document}

\begin{center}
{\LARGE  Degrees in oriented hypergraphs and Ramsey\\ $p$-chromatic number}
\mbox{}\\[8ex]

\begin{multicols}{2}

Yair Caro\\[1ex]
{\small Dept. of Mathematics and Physics\\
University of Haifa-Oranim\\
Tivon 36006, Israel\\
yacaro@kvgeva.org.il}

\columnbreak

Adriana Hansberg\\[1ex]
{\small Dep. de Matem\`atica Aplicada III\\
UPC Barcelona\\
08034 Barcelona, Spain\\
adriana.hansberg@upc.edu}\\[2ex]

\end{multicols}

\end{center}

\begin{abstract}
The family $D(k,m)$ of graphs having an orientation such that for every vertex $v \in V(G)$ either (outdegree) $\deg^+(v) \le k$ or (indegree) $\deg^-(v) \le m$ have been investigated recently in several papers because of the role $D(k,m)$ plays in the efforts to estimate the maximum directed cut in digraphs and the minimum cover of digraphs by directed cuts. Results concerning the chromatic number of graphs in the family $D(k,m)$ have been obtained via the notion of $d$-degeneracy of graphs. In this paper we consider a far reaching generalization of the family $D(k,m)$, in a complementary form, into the context of $r$-uniform hypergraphs, using a generalization of Hakimi's theorem to $r$-uniform hypergraphs and by showing some tight connections with the well known Ramsey numbers for hypergraphs.\\

\noindent
{\small \textbf{Keywords:}  oriented hypergraphs, Ramsey $p$-chromatic number, $d$-degenerate hypergraph, Ramsey numbers, chromatic number} \\
{\small \textbf{AMS subject classification: 05C55, 05C65}}
\end{abstract}


\section{Introduction}

The family $D(k,m)$ of graphs having an orientation such that for every vertex $v \in V(G)$ either (outdegree) $\deg^+(v) \le k$ or (indegree) $\deg^-(v) \le m$ have been investigated recently in several papers because of the role $D(k,m)$ plays in the efforts to estimate the maximum directed cut in digraphs and the minimum cover of digraphs by directed cuts. Results concerning the chromatic number of graphs in the family $D(k,m)$ have been obtained via the notion of $d$-degeneracy of graphs (see \cite{ABGLS, BLZ, BBH, LMP, PR}). A main tool in obtaining results on $D(k,m)$ is the following well known theorem of Hakimi. For a graph $G$, the \emph{maximum average degree} of $G$ is defined as $\mad(G) = \max\frac{2|E(F)|}{|V(F)|}$, where the maximum is taken over all non-empty subgraphs $F$ of $G$.

\begin{thma}[Hakimi, \cite{Hak}]
Let $G$ be a graph. Then $G$ has an orientation such that the maximum outdegree of $G$ is at most $k$ if and only if $\mad(G) \le 2k$.
\end{thma}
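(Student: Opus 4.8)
The plan is to prove both implications, with the forward (necessity) direction being a short double-counting argument and the backward (sufficiency) direction requiring a global matching/flow argument. For necessity, suppose $G$ has an orientation in which every outdegree is at most $k$, and let $F$ be any non-empty subgraph. Restricting the given orientation to the edges of $F$ yields an orientation of $F$, and since each edge contributes exactly $1$ to the total outdegree, $\sum_{v \in V(F)} \deg_F^+(v) = |E(F)|$. As $\deg_F^+(v) \le \deg_G^+(v) \le k$ for every $v$, we obtain $|E(F)| \le k\,|V(F)|$, hence $2|E(F)|/|V(F)| \le 2k$. Taking the maximum over all $F$ gives $\mad(G) \le 2k$.

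For sufficiency I would first reformulate the task: orienting $G$ with maximum outdegree at most $k$ is the same as assigning to each edge one of its two endpoints (to serve as the tail) so that no vertex is chosen more than $k$ times. I would also record that $\mad(G) \le 2k$ is equivalent to $|E(G[S])| \le k\,|S|$ for every vertex subset $S$, since among all subgraphs on a fixed vertex set the induced subgraph $G[S]$ maximizes the number of edges.

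The crux is producing such an assignment, and here I would run a max-flow/min-cut (equivalently, a defect-Hall) argument; note that the elementary degeneracy bound is too weak, since $\mad(G) \le 2k$ only yields $2k$-degeneracy and hence an orientation of outdegree at most $2k$, not $k$. I would build a network with a source $s$ joined to one node per edge of $G$ (capacity $1$), each edge-node joined to its two endpoint-vertices (capacity $\infty$), and each vertex joined to a sink $t$ (capacity $k$); an integral $s$–$t$ flow of value $|E(G)|$ then saturates every edge-node and encodes exactly the desired assignment. By max-flow/min-cut it suffices to check that every cut has capacity at least $|E(G)|$. In a finite cut, every edge-node left on the source side must have both endpoints on the source side (otherwise an infinite arc is severed); so writing $S$ for the source-side vertices, the cut capacity equals $|E(G)| - |E_A| + k|S|$, where $E_A \subseteq E(G[S])$ is the set of source-side edge-nodes, and this is at least $\bigl(|E(G)| - |E(G[S])|\bigr) + k|S| \ge |E(G)|$ precisely because $|E(G[S])| \le k|S|$. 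Hence the flow exists, its integrality produces the assignment, and orienting each edge away from its assigned tail gives the required orientation. The main obstacle is exactly this sufficiency step: obtaining the sharp bound $k$ forces a global (flow or Hall) argument rather than a greedy one.
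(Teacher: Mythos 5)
Your proof is correct, and it takes a genuinely different route from the paper's. The paper never proves Theorem A from scratch: it quotes it from Hakimi, and what it actually proves is the hypergraph generalization in Section 2, whose sufficiency direction is obtained by reduction to the degree-prescription orientation theorem of Frank, Kir\'aly and Kir\'aly (Lemma~\ref{lem_fkk}): take $f \equiv k$, shrink it to a pointwise-minimal $g \le f$ still satisfying $\sum_{v \in F} g(v) \ge e(F)$ for every $F$, show by an exchange argument on a maximal tight set that $\sum_{v} g(v) = e(H)$, and then invoke that theorem to produce an orientation with $\deg_1(v) = g(v) \le k$ (Lemma~\ref{lemma2} and Theorem~\ref{Hakimi_hyper}). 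You instead construct the orientation directly by max-flow/min-cut on the source--edge--vertex--sink network; your cut analysis is exactly right (a finite cut forces every source-side edge-node to keep both endpoints on the source side, so its capacity is $|E(G)| - |E_A| + k|S| \ge |E(G)|$ whenever $|E(G[S])| \le k|S|$), and integrality of the maximum flow yields the tail assignment. Your necessity argument coincides with the paper's double count. As for what each approach buys: yours is self-contained, relying only on the max-flow/min-cut and integrality theorems, and it generalizes verbatim to $r$-uniform hypergraphs by joining each edge-node to its $r$ vertices, so it would prove the paper's Theorem~\ref{Hakimi_hyper} as well without any external orientation theorem; the paper's route, at the cost of citing Frank--Kir\'aly--Kir\'aly as a black box, yields the more flexible intermediate statement that arbitrary prescribed upper bounds $\deg_1(v) \le f(v)$ can be realized, not just a uniform bound $k$. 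Your side remark that degeneracy arguments are too weak (giving only outdegree $2k$) is also a fair justification for why a global argument is unavoidable here.
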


In this paper we consider a far reaching generalization of the family $D(k,m)$, in a complementary form, into the context of $r$-uniform hypergraphs. To present a sample of our results we need the following definitions.

\noindent
1) Let $H$ be an $r$-uniform hypergraph. An orientation of $H$ associates with each edge an ordering of its vertices; an edge of size $r$ can be ordered in $r!$ ways. Let $D(H)$ denote an orientation of $H$. Let $P_1, P_2, \ldots, P_{{r \choose p}}$ be the $p$-sets of $\{1, . . . , r\}$, representing the possible sets of positions that gets a $p$-set contained in an edge $E$ under orientation $D(H)$. For a subset $A \subseteq V(H)$ with $|A| = p$, let $d_i(A)$ denote the number of edges in $D(H)$ in which the set of positions occupied by $A$ on the given orientation of $H$ is precisely $P_i$. When $A = \{v\}$, we simply write $d_i(v)$ for $d_i(\{v\})$, $1 \le i \le r$.  We define the {\it degree vector} of length ${r}\choose{p}$ as the vector with coordinates $d_i(A)$, $1 \le i \le {r \choose p}$, representing the number of occurrences of $A$ in each of the ${r \choose p}$ positions in which $A$ appears in the oriented edges of $D(H)$. Further, $\Delta_i(D) = \max_{v \in V} \deg_i(v)$ is the maximum among the $i$-th coordinates of the degree vectors of the vertices of $H$ under orientation $D$.

\noindent
2) Define $f(D(H),p,k)$ as the number of $p$-sets $A \subseteq V(H)$ with $d_i(A) \ge k$ for all $p$-sets $P_i \subseteq \{1,\ldots,r\}$, $1 \le i \le {r \choose p}$, and denote $f(H,p,k)$ the minimum of $f(D(H),p,k)$ over all orientations $D(H)$ of $H$. We also use $f(n,r,p,k)$ in case that $H = H(n,r) = (K_n)^r$ is the complete $r$-uniform hypergraph. For graphs, as $p =1$, we use the shorter notation $f(G,k)$ for $f(G,1,k)$. Define further $f(r,p,k)$ as the minimum $n$ such that in every orientation of the complete $r$-uniform hypergraph $H(n,r)$ there is a $p$-set with all coordinates at least $k$. Thus $f(r,p,k) = \min \{n \, : \, f(n,r,p,k) > 0 \}$.

\noindent
3) Let $H$ be an $r$-uniform hypergraph. Suppose we color the $p$-sets of $V(H)$ by some colors. An edge $E$ is $p$-monochromatic if all its $p$-sets receive the same color. The \emph{Ramsey $p$-chromatic number} $\chi_R(H,p)$ is the minimum number of colors used in coloring the $p$-sets of $V(H)$ such that no edge is $p$-monochromatic. Note that $\chi_R(H,1)$ is the traditional chromatic number $\chi(H)$ of $H$. Denote by $\chi_R(n,r,p)$ the Ramsey $p$-chromatic number of $H(n,r)$, that is, $\chi_R(n,r,p)$ is the minimum integer $t$ such that the $p$-sets of $H(n,r)$ can be colored by $t$ colors without a $p$-monochromatic edge (a monochromatic copy of $H(r,p)$). Hence, in a sense, $\chi_R(n,r,p)$ is the inverse of the Ramsey numbers. For example, $\chi_R(n,3,2) = 2$ for $3 \le n \le 5$, but $\chi_R(6,3,2) = 3$ since $R(K_3,K_3) = 6$, then $\chi_R(n, 3,2) = 3$ for $6 \le n \le 16$ but $\chi_R(17,3,2) = 4$ since $R(K_3,K_3,K_3) = 17$. Since 
\[
(*)\;\;\; c_1(321)^{k/5} \le R(K_3 : k \mbox{ colors}) \le 3k!  \;\;\; (\dagger)
\] 
we get $c_2 \log n \ge \chi_R(n,3,2) \ge c_3 \log n/ \log \log n$. The left bound (*) was given by Exoo in \cite{Ex}, while the right bound ($\dagger$) is from Chung and Grinstead \cite{CG}.

\noindent
4) Let $B(H,p)$ be a largest family of $p$-sets of $V(H)$ that can be colored using at most $r \choose p$ colors such that no edge of $H$ with all its $p$-sets in $B(H,p)$ is $p$-monochromatic.
Let $|B(H,p)| = b(H,p)$. Thus if $\chi_R(H,p) \le {r \choose p}$ then $b(H,p) = {n \choose p}$.

\noindent
5) We say that an $r$-uniform hypergraph $H$ is \emph{$r$-partite} if $V(H)$ can be partitioned into at most $r$ independent sets. Note that for an $r$-uniform hypergraph $H$, $b(H,1)$ is the cardinality of the largest induced $r$-partite subhypergraph of $H$.

\noindent
6) For an $r$-uniform hypergraph $H$, define $\mad(H) = \max \{re(F)/|F| \, : \, \emptyset \neq F \subseteq V(H)\}$.

\noindent
7) For an $r$-uniform hypergraph $H$, define $M(H,k) = \max \{ |A_1 \cup A_2 \cup \ldots \cup A_r| \}$, where the maximum is taken among all mutually vertex disjoint subsets $A_i$ such that $\mad(A_i) \le rk$, for $i = 1, 2, \ldots, r$.

\noindent
8) Recall that a hypergraph $H$ is \emph{$d$-degenerate} if in every induced subhypergraph $F$ of $H$ (including $H$ itself) there is a vertex whose degree in $F$ is at most $d$. A classical result of Szekeres and Wilf \cite{SzWi} states that, if a graph $G$ is $d$-degenerate, then $\chi(G) \le d+1$. This theorem extends easily to hypergraphs, namely if $H$ is a $d$-degenerate hypergraph then $\chi(H) \le d+1$.


\noindent
9) We shall now complete the notation used in this paper. Given a hypergraph $H$ with vertex set $V(H)$ and edge set $E(H)$, the number  edges of $H$ is denoted by $e(H)$. With $\deg(v)$ we denote the {\it degree} of $v \in V$, i.e. the number of edges containing $v$. We denote by $\delta(H)$ the minimum among all degrees of the vertices of $H$. Let $\overline{d}(H) = \frac{1}{n}\sum_{v \in V} \deg(v)$ be the \emph{average degree} of $H$. We will deal with \emph{$r$-uniform} hypergraphs, i.e. hypergraphs having every edge of size $r$. We denote with $H(n,r)$ the complete $r$-uniform hypergraph on $n$ vertices. For a subset $A \subseteq V$ of the vertex set of $H$, the {\it induced subhpergraph} $H[A]$ of $H$ by $A$ is the hyperpgraph with vertex set $A$ and all edges $E\in E(H)$ such that $E \subseteq A$. Moreover, for $v \in A$, $\deg(v \,:\, A)$ stands for the degree of $v$ in $H[A]$.
We call a subset $S \subseteq V$ {\it independent} if $|S \cap E| \le r-1$ for every edge $E \in E(H)$ and with $\alpha(H)$ we denote the maximum cardinality of an independent set of $H$. \\

\indent
All these concepts are valid also for graphs ($r = 2$), where we write $G$ in stead of $H$. Finally, we denote by $K_n$ the complete graph on $n$ vertices and by $K_{a,b}$ is the complete bipartite graph with partite sets of cardinality $a$ and $b$. \\

The following is a sample of our main results:

\noindent
1) Let $H$ be an $r$-uniform hypergraph. Then there is an orientation of $H$ such that for every vertex $v \in V(H)$, the (outdegree) $d_1(v) \le k$ if and only if $\mad(H) \le rk$.

\noindent
2) Let $H$ be an $r$-uniform hypergraph on $n$ vertices. Then $f(H,1,k) = n - M(H,k-1)$. 

\noindent
3) Let $H$ be an $r$-uniform hypergraph. Then $f(H,1,k) \ge \chi(H) - r(r(k-1) +1)$. 

\noindent
4) $f(n,r,1,k) = \max \{n- rt, 0\}$, where $t$ is the maximum integer such that ${t-1 \choose r-1} \le (k - 1)r$. 

\noindent
5) $f(H,p,1) \ge {n \choose p} - b(H,p)$ where equality holds for $p = 1$ and $p = r-1$.

\noindent
6) $f(r,r-1,1) = R(H(r,r-1),r)$, the Ramsey number of $H(r,r-1)$ using $r$ colors. In particular $f(3,2,1) = 17$, $f(4,3,1) \le 15202$ (see \cite{Ra})

\noindent
7) Let $H(n.r )$ be the complete $r$-uniform hypergraph on $n$ vertices. Suppose $r > p \ge 1$ , $t \ge p$, $k \ge 1$, and $n \ge N(r,p,t,k)$. Then in every orientation $D(H)$ there is a $t$-set $B$ of $V(H)$ all its $t \choose p$ $p$-sets having degree-vector with all coordinates at least $k$.

\noindent
8) We determine $f(G,k)$ for several families of graphs including complete $t$-partite graphs, maximal outerplanar graphs and maximal planar graphs. 

The rest of this paper is organized as follows.

\noindent
Section 2 - Generalization of Hakimi theorem to $r$-uniform hypergraphs, and complementary facts.\\ Section 3 - Bounds on $f(H,1,k)$ and $f(H,p,1)$ that will be developed in four subsections. \\[-4ex]
\begin{itemize}
\item[3.1 -] Bounds using the generalization of Hakimi theorem to $r$-uniform hypergraphs.
\item[3.2 -] Bounds using the chromatic number.
\item[3.3 -] Concrete results for families of graphs. 
\end{itemize}
Section 4 - Ramsey type theorem for $f(H,p,k)$, $p \ge 2$.\\[-4ex]
\begin{itemize}
\item[4.1 -] Bounds using the notion of $\bp(H,p)$.
\item[4.2 -] Bounds using Ramsey numbers.
\end{itemize}
Section 5 - NP-Completeness of $f(G,k)$.\\
Section 6 - Open problems.\\
Section 7 - References.

\comment{
We first need to define the following notation. For an $r$-uniform hypergraph $H$, the \emph{maximum average degree} $\mad(H)$ of $H$ is the maximum number $r\cdot e(F)/|F|$ over all non empty subsets $F$ of $V(H)$. Further, for a non-negative integer $k$, define $M(H,k) = \max \{ | A_1 \cup A_2 \cup \ldots \cup A_r |\}$, where the maximum is taken over all mutually vertex disjoint subsets $A_i$ such that $\mad_H(A_i) \le rk$, for $i=1,\ldots ,r$.

Let $D$ be an orientation of $H$, namely every edge $E$ of $H$ is assigned one of the $r!$ possible orders of its elements. For a $p$-set of vertices $A$, $1 \le p \le r -1$, we define the degree vector $v(A,D)$ of length $r \choose p$ in which every coordinate represents the number of occurrences of $A$ in one of the $r \choose p$ positions in which $A$ appears in the oriented edges of the oriented hypergraph $D$. For integers $p \ge 1$ and $k \ge 0$, define $f(D,p,k)$ as the number of $p$-sets $A$, whose vector $v(A,D)$ has all coordinates at least $k$ and denote $f(H,p,k)$ the minimum of $f(D,p,k)$ over all orientations $D$ of $H$. Evidently, $f(D,p,0) = |V(H)|$. Write $f(D,p,1) = f(D,p)$ and $f(H,p,1) = f(H,p)$ for short. We also use $f(n,r,p,k)$ in case that $H = H(n,r) = (K_n)^r$ is the complete $r$-uniform hypergraph.  Note that if $H$ is a graph, i.e. $r=2$, then $p=1$ and $f(D,k)$ is the number of vertices of $H$ whose indegree and outdegree in $D$ is at least $k$.
}


 \section{Generalization of Hakimi's Theorem to hypergraphs}

To prove the generalization of Hakimi's theorem to $r$-uniform hypergraphs, we need the following result:

\begin{lem}[Frank, Kir\'aly, Kir\'aly \cite{FKK}]\label{lem_fkk}
Let $H(V,E)$ be a hypergraph and let $f: V \rightarrow \mathbb{Z}^+$ be a mapping of the vertex set $V$ of $H$ into the set of non-negative integers. Then there is an orientation $D(H)$ of $H$ such that $\deg_1(v) = f(v)$ for every $v \in V$ if and only if
\begin{enumerate}
\item[{\rm (i)}] $\sum_{v \in V} f(v) = e(H)$ and
\item[{\rm (ii)}] $\sum_{v \in F} f(v) \ge e(F)$ for every $F \subseteq V$.
\end{enumerate}
\end{lem}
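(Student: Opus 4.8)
The plan is to reinterpret the desired orientation purely in terms of which vertex occupies position~$1$ in each edge, since $\deg_1(v)$ depends only on that choice. Thus an orientation with the prescribed first-position degrees is the same as a \emph{head assignment} $h \colon E \to V$ with $h(E) \in E$ for every edge $E$ and $|h^{-1}(v)| = f(v)$ for every vertex $v$; the remaining orderings of each edge are irrelevant to $\deg_1$ and may be chosen arbitrarily. This reduction turns the lemma into a degree-constrained assignment problem.

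For necessity, suppose such an $h$ exists. Counting the incidences (edge, its head) in two ways gives $\sum_{v \in V} f(v) = \sum_{v \in V} |h^{-1}(v)| = |E| = e(H)$, which is~(i). For~(ii), fix $F \subseteq V$ and observe that every edge $E$ with $E \subseteq F$ has $h(E) \in E \subseteq F$, so the edges counted by $e(F)$ inject into $\bigcup_{v \in F} h^{-1}(v)$; hence $e(F) \le \sum_{v \in F} |h^{-1}(v)| = \sum_{v \in F} f(v)$.

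For sufficiency I would set up a bipartite assignment problem, equivalently an integral network flow. Consider the bipartite incidence structure between the edge set $E$ (each edge to be matched to one vertex) and the vertex set $V$ carrying capacity $f(v)$ on vertex $v$, joining $E$ to $v$ whenever $v \in E$. Existence of a head assignment saturating all of $E$ while respecting the capacities is guaranteed, by the deficiency form of Hall's theorem (equivalently, by integral max-flow--min-cut), precisely when for every family $S \subseteq E$ one has $|S| \le \sum_{v \in N(S)} f(v)$, where $N(S) = \bigcup_{E \in S} E$.

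The key step is to deduce this Hall-type inequality from~(ii). Given $S \subseteq E$, put $F = \bigcup_{E \in S} E = N(S)$; every edge of $S$ is then contained in $F$, so $|S| \le e(F)$, and~(ii) yields $e(F) \le \sum_{v \in F} f(v) = \sum_{v \in N(S)} f(v)$, as required. This produces an assignment using at most $f(v)$ edges at each $v$, and condition~(i), $\sum_{v \in V} f(v) = e(H) = |E|$, forces every capacity to be met with equality, so $|h^{-1}(v)| = f(v)$ for all $v$. I expect the only real obstacle to be the bookkeeping in the sufficiency direction: pinning down exactly which deficiency/min-cut condition is needed, checking that the union-of-edges set $F = N(S)$ converts~(ii) into precisely Hall's inequality, and noting that~(i) is what upgrades the capacity inequalities to the desired equalities.
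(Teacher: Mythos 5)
Your proposal is correct, but there is nothing in the paper to compare it against: the paper does not prove this lemma at all --- it imports it from Frank--Kir\'aly--Kir\'aly \cite{FKK} as a black box. Your argument is a valid, self-contained derivation. The reduction of an orientation to a head assignment $h\colon E \to V$ is exactly right, since $\deg_1$ is blind to how the remaining positions of each edge are ordered; the double count gives necessity; and in the sufficiency direction the translation of condition (ii) into the capacitated Hall/max-flow condition via $F = N(S) = \bigcup_{E \in S} E$ is sound, because every edge of $S$ is contained in $F$, so $|S| \le e(F) \le \sum_{v \in F} f(v)$, and condition (i) then upgrades the capacity inequalities $|h^{-1}(v)| \le f(v)$ to the required equalities. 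One further point worth making explicit: the portion of your sufficiency argument that uses only (ii) --- existence of a head assignment with $|h^{-1}(v)| \le f(v)$ for all $v$ --- is precisely the paper's Lemma \ref{lemma2}, which the authors derive from Lemma \ref{lem_fkk} by a separate extremal argument (choosing $g \le f$ feasible with $\sum_{v \in V} g(v)$ minimum and showing this sum equals $e(H)$). So your flow argument buys both statements at once and makes Section 2 of the paper fully self-contained, whereas the paper's route buys brevity (a citation) at the cost of leaving the core existence result unproved in the text.
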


\begin{lem}\label{lemma2}
Let $H = (V,E)$ be a hypergraph and let $f: V \rightarrow \mathbb{Z}^+$ be a mapping of the vertex set $V$ of $H$ into the set of non-negative integers. Suppose that, for every $F \subseteq V$, $\sum_{v \in F}f(v) \ge e(F)$. Then there is an orientation $D(H)$ of $H$ such that $\deg_1(v) \le f(v)$ for every $v \in V$.
\end{lem}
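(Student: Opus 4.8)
The plan is to deduce the statement from Lemma~\ref{lem_fkk} by trimming $f$ down to a function $f' \le f$ that satisfies \emph{both} hypotheses of that lemma; an orientation realizing $\deg_1(v) = f'(v)$ then automatically gives $\deg_1(v) \le f(v)$. Setting $F = V$ in the hypothesis yields $\sum_{v \in V} f(v) \ge e(H)$, so there is a nonnegative ``excess'' $\sum_{v \in V} f(v) - e(H)$ that must be removed while keeping all the inequalities $\sum_{v \in F} f(v) \ge e(F)$ intact. I would do this greedily: as long as $\sum_{v \in V} f(v) > e(H)$, I want to locate a vertex $u$ with $f(u) \ge 1$ whose value can be lowered by one without violating any constraint, decrease it, and repeat until equality is reached, at which point Lemma~\ref{lem_fkk} applies directly.

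The engine of the argument is a submodularity observation. Writing $g(F) = \sum_{v \in F} f(v) - e(F)$, the hypothesis is exactly $g(F) \ge 0$ for all $F \subseteq V$. A short edge-by-edge check shows that $e$ is \emph{supermodular}, i.e. $e(F \cup G) + e(F \cap G) \ge e(F) + e(G)$: the only edge that can produce a discrepancy is one contained in $F \cup G$ but in neither $F$ nor $G$, and it always pushes the inequality in this direction. Since $F \mapsto \sum_{v \in F} f(v)$ is modular, $g$ is submodular. Consequently the family of \emph{tight} sets $\{F : g(F) = 0\}$ is closed under union and intersection: if $g(F) = g(G) = 0$, then $0 \le g(F \cup G) + g(F \cap G) \le g(F) + g(G) = 0$ forces both terms to vanish. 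Hence there is a unique maximal tight set $T^\ast = \bigcup \{F : g(F) = 0\}$, and it is itself tight.

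Now suppose $g(V) > 0$. Then $V$ is not tight, so $T^\ast \neq V$ and $S := V \setminus T^\ast \neq \emptyset$. Because every tight set is contained in $T^\ast$, any set $F$ that meets $S$ fails to be tight, so $g(F) \ge 1$ whenever $F$ contains a chosen $u \in S$; lowering $f$ by one at $u$ therefore keeps $g(F) \ge 0$ for every $F$ (the sets avoiding $u$ are unaffected). It remains to find $u \in S$ with $f(u) \ge 1$. If instead $f \equiv 0$ on $S$, then $\sum_{v \in V} f(v) = \sum_{v \in T^\ast} f(v) = e(T^\ast) \le e(H)$, whence $g(V) \le 0$, a contradiction. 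Thus a legal reduction always exists while the excess is positive; iterating drives $\sum_{v \in V} f(v)$ down to $e(H)$ with all inequalities preserved, and Lemma~\ref{lem_fkk} then yields an orientation with $\deg_1(v) = f'(v) \le f(v)$. The main obstacle is precisely this structural step in the middle — proving that the tight sets form a lattice and extracting from it a reducible vertex that still carries positive $f$-value; once the supermodularity of $e$ is in hand, the remainder is bookkeeping.
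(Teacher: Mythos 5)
Your proposal is correct and follows essentially the same strategy as the paper: reduce to Lemma \ref{lem_fkk} by lowering $f$ to a feasible function with total sum $e(H)$, finding each decrementable vertex outside a maximal tight set. The organizational differences are minor --- the paper fixes a sum-minimal feasible $g \le f$ and contradicts minimality using a maximum-cardinality tight set $X$, while you decrement greedily, guided by the union $T^\ast$ of all tight sets. One point worth recording: your explicit uncrossing (supermodularity of $e$, hence tight sets closed under union and intersection) is the rigorous version of the paper's key step; the paper's own chain invokes $e(F \cup X) \ge e(F) + e(X)$, which can fail when $F \cap X$ contains edges, and the clean repair is precisely your argument, namely combining $e(F) + e(X) \le e(F \cup X) + e(F \cap X)$ with the constraint $\sum_{v \in F \cap X} g(v) \ge e(F \cap X)$ to conclude that $F \cup X$ is itself tight, contradicting maximality.
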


\begin{proof}
Let $g : V \rightarrow \mathbb{Z}^+$ be a mapping such that
\begin{enumerate}
\item[(i)] for every $F \subseteq V$, $\sum_{v \in F} g(v) \ge e(F)$; 
\item[(ii)] $g(v) \le f(v)$ for every $v \in V$;
\item[(iii)] $\sum_{v \in V} g(v)$ is the minimum among all functions that satisfy (i) and (ii).
\end{enumerate}
We will show that $\sum_{v \in V} g(v) = e(H)$ and then, by Lemma \ref{lem_fkk}, there is an orientation $D(H)$ of $H$ for which $\deg_1(v) = g(v) \le f(v)$ for every $v \in V$ and we are done. Let $X \subseteq V$ be a set with maximum cardinality for which $\sum_{v \in X} g(v) = e(X)$. Possibly $X$ is the empty set.
If $X = V$ we are done, so we assume $|X| < |V|$. If $w$ is a vertex in $V \setminus X$ for which $g(w) = 0$, then by maximality of $|X|$ we obtain 
\[
\sum_{v \in X \cup \{w\}} g(v)  > e(X \cup \{w\})  \ge e(X)  = \sum_{v \in X} g(v) +0 = \sum_{v \in X} g(v) +g(w) = \sum_{v \in X \cup \{w\}} g(v),
\]
a contradiction. Hence $V\setminus X$ contains no vertex $w$ with $g(w)= 0$.
Let $z$ be in $V\setminus X$ such that $g(z)>0$ and define $h: V \rightarrow \mathbb{Z}^+$ such that $h(v) = g(v)$ for $v \in V \setminus \{z\}$ and $h(z) = g(z) -1$. Suppose now that there is a subset $F \subseteq V$ such that $\sum_{v \in F} h(v) < e(F)$. Then $z \in F$ and $e(F) \le \sum_{v \in F} g(v) =  1+ \sum_{v \in F} h(v) < 1+ e(F)$, which implies that $\sum_{v \in F} g(v) = e(F)$. But, by the maximality of $|X|$ and since $F$ is not contained in $X$ as $z \in F$ and $z \in V \setminus X$, we have that
\[
\sum_{v \in F \cup X} g(v) >  e(F \cup X) \ge e(F) + e(X)  =  \sum_{v \in F} g(v) + \sum_{v \in X} g(v)  \ge \sum_{v \in F\cup X} g(v),
\]
again a contradiction. It follows that $\sum_{v \in F} h(v) \ge e(F)$ for all $F \subseteq V$ and thus $h$ is a function satisfying (i). Since evidently $h(v) \le g(v) \le f(v)$ for all $v \in V$, $h$ also satisfies (ii). However $\sum_{v \in V} h(v) < \sum_{v \in V} g(v)$, which contradicts the minimality of $g$. Hence $|X| = |V|$ and we are done.
 \end{proof}
 
 \begin{thm}[Generalization of Hakimi's Theorem to hypergraphs]\label{Hakimi_hyper}
Let $H$ be an $r$-uniform hypergraph and $k \ge 0$ an integer. Then there is an orientation of $H$ such that, for all $v \in V$, $\deg_1(v) \le k$ if and only if $\mad(H) \le rk$. 
 \end{thm}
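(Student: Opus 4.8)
The plan is to deduce both implications from Lemma \ref{lemma2} together with an elementary counting identity. Throughout, I would exploit that $\mad(H) \le rk$ is, by its very definition, equivalent to the family of inequalities $e(F) \le k|F|$ holding for every non-empty $F \subseteq V$.

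For the sufficiency (assuming $\mad(H) \le rk$), I would simply apply Lemma \ref{lemma2} to the constant function $f \equiv k$. The hypothesis $\sum_{v \in F} f(v) = k|F| \ge e(F)$ required by the lemma is exactly the reformulation of $\mad(H) \le rk$ for non-empty $F$, while for $F = \emptyset$ it holds trivially since $e(\emptyset) = 0$. Lemma \ref{lemma2} then yields an orientation $D(H)$ with $\deg_1(v) \le f(v) = k$ for all $v$, which is the desired direction.

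For the necessity, suppose $D(H)$ is an orientation with $\deg_1(v) \le k$ for every $v$. Fix a non-empty $F \subseteq V$ and consider the edges of the induced subhypergraph $H[F]$, namely those $E \in E(H)$ with $E \subseteq F$. Under the orientation $D(H)$ each such edge has a unique vertex occupying position $1$, and that vertex lies in $E \subseteq F$. Hence assigning to each edge $E \subseteq F$ its position-$1$ vertex gives $e(F) = \sum_{v \in F} d_1^F(v)$, where $d_1^F(v)$ counts the edges contained in $F$ that place $v$ in position $1$. Since restricting to edges inside $F$ can only decrease the count, $d_1^F(v) \le \deg_1(v) \le k$, so $e(F) \le k|F|$, i.e. $re(F)/|F| \le rk$. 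Taking the maximum over all non-empty $F$ then gives $\mad(H) \le rk$.

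The argument is short precisely because Lemma \ref{lemma2} carries the real weight; the only points needing care are the bookkeeping in the counting step — verifying that the position-$1$ vertex of an edge contained in $F$ necessarily stays inside $F$, so that the sum $\sum_{v \in F} d_1^F(v)$ recovers $e(F)$ exactly — and the clean translation between $\mad(H) \le rk$ and the inequalities $e(F) \le k|F|$. I do not anticipate any genuine obstacle beyond managing this translation and the empty-set edge case cleanly.
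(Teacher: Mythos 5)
Your proposal is correct and follows essentially the same route as the paper: the sufficiency is the identical application of Lemma \ref{lemma2} with the constant function $f \equiv k$, and your necessity argument is the same counting identity $e(F) = \sum_{v \in F} d_1^F(v) \le k|F|$ that the paper uses, merely stated directly for every non-empty $F$ rather than by contradiction on an extremal set $X$. No gaps.
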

 
 \begin{proof}
We proceed proving first the necessity. If $\mad(H) > rk$,  then let $X$ be a subset of $V$ such that $\mad(H) = re(X)/|X| > rk$. Let $D^*$ be the induced orientation from $V$ on $X$. If $\deg_1(v) \le k$ for every $v \in V$, then in particular $\deg_1(v:X) \le k$ for every $v \in X$.
But then $e(X) = \sum_{v \in X} \deg_1(v : X) \le \sum_{v \in X} \deg_1(v) \le k |X|$. Hence multiplying by $r$ we get $re(X) \le rk |X|$ and hence $r e(X)/|X| \le rk$, a contradiction.

For the sufficiency, let $f(v) = k$ for every $v \in V$. Since $\mad(H) \le r k$, we get $r e(F)/|F| \le rk$ for every $F \subseteq V$ and $e(F) \le k |F| = \sum_{v \in F} f(v)$. Hence by Lemma \ref{lemma2} there is an orientation of $H$ such that $\deg_1(v) \le k$ for every $v \in V$. 
 \end{proof}

 
 \section{Bounds on $f(H,1,k)$ and $f(H,p,1)$}
 
  \subsection{Bounds using the generalization of Hakimi's Theorem to hypergraphs}
 
 The next theorem reveals a basic relation between $f(H,1,k)$ and $M(H,k-1)$.
 
\begin{thm}\label{f in M}
Let $H$ be an $r$-uniform hypergraph on $n$ vertices. Then $f(H,1,k) = n - M(H,k-1)$.
\end{thm}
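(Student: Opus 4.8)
The plan is to reduce the claimed identity to an extremal statement about a single quantity. Call a vertex $v$ \emph{good} for an orientation $D$ if its degree vector has some coordinate below $k$, i.e.\ $d_i(v) \le k-1$ for at least one position $i$; otherwise $v$ is \emph{bad}. By definition $f(D(H),1,k)$ counts exactly the bad vertices, so $n - f(D(H),1,k)$ is the number of good vertices. Minimizing $f(D(H),1,k)$ over orientations is the same as maximizing the number of good vertices, and hence it suffices to prove
\[
\max_{D(H)} \bigl|\{v \in V : \exists\, i,\ d_i(v) \le k-1\}\bigr| = M(H,k-1).
\]
I would prove the two inequalities separately, in each case exploiting the fact (a consequence of Theorem \ref{Hakimi_hyper} together with the symmetry that permuting position labels turns position $1$ into any position $i$) that $H[A]$ admits an orientation with $d_i(v) \le k-1$ for all $v \in A$ precisely when $\mad(H[A]) \le r(k-1)$.

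For the inequality $f(H,1,k) \le n - M(H,k-1)$, I would start from disjoint sets $A_1, \dots, A_r$ realizing $M(H,k-1)$, so that $\mad(H[A_i]) \le r(k-1)$ for each $i$, and build a single orientation making every vertex of $A_1 \cup \dots \cup A_r$ good. Since the $A_i$ are disjoint, each edge lies in at most one $H[A_i]$. For an edge contained in $H[A_i]$ I would copy the ordering supplied by the Hakimi orientation of $H[A_i]$, which keeps $d_i(v) \le k-1$ for its vertices; as all these vertices lie in $A_i$, only position $i$ matters to them and the remaining slots are free. For an edge $E$ contained in no $A_i$, I would assign a vertex outside $A_i$ to position $i$ whenever $E$ meets $A_i$, so that no vertex of $A_i$ ever occupies position $i$ through such edges. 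Each vertex of $E$ lies in at most one $A_i$ and so forbids at most one position, and no single position is forbidden by all of $E$ (that would force $E \subseteq A_i$); a short Hall's-theorem check then produces the required position-respecting bijection. Consequently every $v \in A_i$ has $d_i(v) \le k-1$ and is good, giving an orientation with at most $n - |A_1 \cup \dots \cup A_r|$ bad vertices.

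For the reverse inequality $f(H,1,k) \ge n - M(H,k-1)$, I would fix an arbitrary orientation $D$, let $G$ be its set of good vertices, and assign each $v \in G$ to one position $i(v)$ witnessing $d_{i(v)}(v) \le k-1$. Setting $A_i = \{v \in G : i(v) = i\}$ gives disjoint sets with $\bigcup_i A_i = G$, and the task is to verify $\mad(H[A_i]) \le r(k-1)$. Here I would use that in any orientation each edge of an induced subhypergraph $H[F]$ places exactly one of its vertices in position $i$, so for $F \subseteq A_i$ one has $e(F) = \sum_{v \in F} c_i(v,F)$, where $c_i(v,F)$ denotes the number of such edges assigning $v$ to position $i$ and satisfies $c_i(v,F) \le d_i(v) \le k-1$; as the sum ranges over the $|F|$ vertices of $F$, this gives $e(F) \le (k-1)|F|$, that is $re(F)/|F| \le r(k-1)$, and taking the maximum over $F$ yields $\mad(H[A_i]) \le r(k-1)$. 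Thus $A_1, \dots, A_r$ are admissible in the definition of $M(H,k-1)$ and $|G| \le M(H,k-1)$, so $D$ has at least $n - M(H,k-1)$ bad vertices.

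I expect the main obstacle to be the first (constructive) direction, specifically the simultaneous decoupling of the $r$ positions within one global orientation: controlling position $i$ on the edges internal to $A_i$ via Theorem \ref{Hakimi_hyper} is routine, but I must also guarantee that the crossing edges never reintroduce an $A_i$-vertex into position $i$, and that all $r$ of these per-edge demands are met at once. The Hall/SDR argument, resting on the fact that the $A_i$ are pairwise disjoint so each vertex carries at most one forbidden position, is what makes this consistency possible; combining the two inequalities then gives $f(H,1,k) = n - M(H,k-1)$.
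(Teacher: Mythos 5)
Your proposal is correct and follows essentially the same route as the paper's proof: the upper bound via Hakimi orientations on the sets $A_i$ combined with a Hall's-theorem matching to keep crossing edges from placing $A_i$-vertices in position $i$, and the lower bound via partitioning the good vertices by a witnessing position and showing $\mad(H[A_i]) \le r(k-1)$. The only cosmetic difference is that where the paper cites the necessity direction of Theorem \ref{Hakimi_hyper} for the lower bound, you inline the same double-counting argument directly.
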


\begin{proof}
For the upper bound, let $A_1, \ldots, A_r$ be $r$ mutually vertex disjoint sets realizing $M(H,k-1)$ and let $B = V \setminus \bigcup_{i=1}^r A_i$. Since $\mad(A_i) \le r(k-1)$, then by Theorem \ref{Hakimi_hyper}, we can orient the edges in $A_i$ such that the $\deg_i(v) \le k-1$ for all $v \in A_i$. Further, orient the edges in $B$ arbitrarily. Now we have to take care for edges which are not contained in some $A_i$ or $B$ without violating $\deg_i \le k - 1$ in $A_i$,  $i = 1, \ldots, r$. Let $E$ be an edge such that $E \setminus A_i \neq \emptyset$,  and such that $E \setminus B \neq \emptyset$ (i.e. such that it is neither contained in any $A_i$ nor in $B$). Set $a_i = |E \cap A_i|$ and $b = |E \cap B|$. Clearly, $b+ \sum_{1 \le i \le r} a_i =r$ and there are at least two positive summands and all $a_i \le r - 1$. Define a bipartite graph $T$ with one side the vertex set $X = \{1,\ldots ,r\}$ the other side the vertex set $E$ and the edges such that, for $v \in E$ and $i \in X$, $vi$ is an edge if and only if $v \notin A_i$. If we show that a perfect matching exists in this bipartite graph $T$, then this perfect matching supplies an order on $E$ such that the vertices in $A_i$ do never get position $i$ and so $\deg_i(v) \le k-1$ for all $v \in A_i$. For a subset $Q$ of $E$ consider the following cases. If $|Q| = r$, then $Q=E$ and $Q$ contains vertices either from some $A_i$ and $A_ j$ or from some $A_i$ and $B$ and in both cases $|N(Q)| = r = |Q|$. If otherwise $|Q| \le r-1$, then, as every vertex in $E$ has at least $r-1$ neighbors in $X$, it follows clearly that $|N(Q)| \ge r - 1 \ge |Q|$ and we are done. \\
Hence there is an order on $E$ that does not violate $\deg_i(v) \le k - 1$ for each $v \in A_i$, $i= 1, \ldots, r$ and we are done. Hence there are at least $M(H,k-1)$ vertices in which, for some $i = 1, \ldots ,r$, $\deg_i(v) \le k-1$, proving $n - M(H,k-1) \ge f(H,1,k)$.

For the lower bound, let $D$ be an orientation of $H$ that realizes $f(H,1,k)$. Let $A_1$ be the set of all vertices $v$ with $\deg_1(v) \le k-1$ and, for $i= 2, \ldots, r$, let $A_i$ be the set of vertices $v$ not in $\bigcup_{j=1}^{i-1} A_j$ with $\deg_i(v) \le k-1$. Consider the induced orientation $D_i$ on $H[A_i]$. Since $\Delta_i(D_i) \le k - 1$, then by Theorem \ref{Hakimi_hyper}, it follows that $\mad(A_i) \le r(k-1)$. Since the sets $A_i$, $1 \le i \le r$, are pairwise vertex disjoint, it follows that $|\bigcup_{i= 1}^{r} A_i | \le M(H,k-1)$. Hence $f(H,1,k) = n - |\bigcup_{i= 1}^{r} A_i | \ge n - M(G,k-1)$.

Combining the upper and the lower bound we obtain $f(H,1,k) = n - M(H,k-1)$.
\end{proof}

\comment{
\begin{cor}\label{K_n}
For the complete graph on $n \ge 4k-2$ vertices, $f(K_n,k) = n - 4k +2$.
\end{cor}

\begin{proof}
Observe that in $K_n$, the maximum size $|A|$ among all subsets $A$ such that $\mad(A) \le 2k-2$ is exactly $2k-1$ and so if $n \ge 4k-2$ we have $M(G,k-1)=4k-2$ (take two such classes) and with Theorem \ref{f in M} we get $f(K_n,k) = n - 4k +2$.
\end{proof}
}

\begin{thm}\label{f(n,r,1,k)}
Let $t$ be the maximum integer such that ${t-1 \choose r-1} \le (k-1) r$. Then 
$$f(n, r,1,k) = \max\{n-rt, 0\}.$$ 
In particular, $f(K_n,k) = f(n,2,1,k) = n - 4k + 2$, $f(n,3,1,k) = n - 3 \lfloor \frac{\sqrt{24k-23}+3}{2} \rfloor$ and $f(n,r,1,k) < n - e^{-1}(r-1) r^{\frac{r}{r-1}} (k-1)^{\frac{1}{r-1}}$.
\end{thm}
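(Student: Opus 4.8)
The plan is to reduce everything to the computation of $M(H(n,r),k-1)$ via Theorem \ref{f in M}, which already gives $f(n,r,1,k)= n - M(H(n,r),k-1)$. The key preliminary step is to understand $\mad$ of induced subhypergraphs of the complete hypergraph. For a set $A$ of size $s$, the induced subhypergraph $H(n,r)[A]$ is the complete $r$-uniform hypergraph on $s$ vertices, so every $F\subseteq A$ with $|F|=m$ satisfies $r\,e(F)/|F| = r\binom{m}{r}/m = \binom{m-1}{r-1}$. Since $\binom{m-1}{r-1}$ is nondecreasing in $m$, the maximum over $F\subseteq A$ is attained at $F=A$, whence $\mad(A) = \binom{s-1}{r-1}$. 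Consequently $\mad(A)\le r(k-1)$ holds if and only if $\binom{s-1}{r-1}\le r(k-1)$, that is, if and only if $|A|\le t$, with $t$ exactly as defined in the statement.

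Next I would compute $M(H(n,r),k-1)$ directly. Because in the complete hypergraph the admissibility condition $\mad(A_i)\le r(k-1)$ is equivalent to the size bound $|A_i|\le t$, any family of $r$ mutually disjoint admissible sets $A_1,\dots,A_r$ satisfies $|A_1\cup\cdots\cup A_r|\le rt$. This bound is attained whenever $n\ge rt$ (take $r$ disjoint $t$-sets), and when $n<rt$ one can instead distribute all $n$ vertices into $r$ parts of size at most $\lceil n/r\rceil\le t$, covering all of $V$. Hence $M(H(n,r),k-1)=\min\{rt,n\}$, and therefore $f(n,r,1,k) = n - \min\{rt,n\} = \max\{n-rt,0\}$, which is the main formula.

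The special cases $r=2,3$ then follow by solving $\binom{t-1}{r-1}\le r(k-1)$ explicitly (for $n\ge rt$, so that the maximum is $n-rt$): for $r=2$ one gets $t-1\le 2(k-1)$, so $t=2k-1$ and $rt=4k-2$; for $r=3$ the quadratic $(t-1)(t-2)\le 6(k-1)$ yields $t=\lfloor(\sqrt{24k-23}+3)/2\rfloor$. For the general upper bound I would exploit the maximality of $t$, which gives $\binom{t}{r-1} > r(k-1)$; combining this with $\binom{t}{r-1}\le t^{r-1}/(r-1)!$ produces $t^{r-1}>r!(k-1)$, i.e. $t > (r!\,(k-1))^{1/(r-1)}$, whence $rt > r(r!)^{1/(r-1)}(k-1)^{1/(r-1)}$. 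Applying the elementary estimate $(r-1)! > ((r-1)/e)^{r-1}$ then shows $r(r!)^{1/(r-1)} > e^{-1}(r-1)r^{r/(r-1)}$, giving the claimed strict inequality.

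I expect the only genuinely delicate points to be the monotonicity-based evaluation of $\mad$ on the complete hypergraph, which is precisely what converts the mad-condition into the clean size threshold $|A|\le t$ and makes the maximization transparent, and the final Stirling-type step, where the strictness of $(r-1)!>((r-1)/e)^{r-1}$ is exactly what is needed to turn the bound into a strict one. Everything else is bookkeeping, and in particular no new orientation argument is required beyond Theorem \ref{f in M} and Theorem \ref{Hakimi_hyper} already established above.
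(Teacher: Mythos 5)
Your proof is correct and follows essentially the same route as the paper: reduce via Theorem \ref{f in M}, observe that in $H(n,r)$ the condition $\mad(A)\le r(k-1)$ is equivalent to $|A|\le t$, conclude $M(H(n,r),k-1)=\min\{rt,n\}$, and then handle the special cases and the asymptotic bound with the same binomial estimate (your Stirling step $(r-1)!>((r-1)/e)^{r-1}$ is just an explicit derivation of the bound ${t \choose r-1}\le (te/(r-1))^{r-1}$ that the paper quotes directly). The only difference is that you spell out the monotonicity of ${m-1 \choose r-1}$ in $m$, which the paper leaves implicit.
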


\begin{proof}
Let $H = H(n,r)$. Let $t$ be the maximum integer such that ${t-1 \choose r-1} \le (k-1) r$. If $A \subseteq V(H)$ is a set such that $\mad(A) \le r(k-1)$, then $\frac{r}{|A|} {|A| \choose r} \le r (k-1)$, which is equivalent to ${|A|-1 \choose r-1} \le r(k-1)$. Hence $|A| \le t$ and the maximum cardinality of such a set $A$ with $\mad(A) \le r(k-1)$ is precisely $t$. Thus $M(H(n,r),k-1) = \min \{r t, n\}$. Now Theorem \ref{f in M} yields $f(n,r,1,k) = \max \{n - rt, 0\}$.

When $r=2$, $t$ is easily computed to $2k-1$, while when $r=3$, it is not difficult to check that $t = \lfloor \frac{\sqrt{24k-23}+3}{2} \rfloor$. Finally, using $r(k-1) < {t \choose r-1} \le (\frac{t e}{r-1})^{r-1}$, we obtain that $t >  (r-1) e^{-1} (r (k-1))^{\frac{1}{r-1}}$, implying that $f(n,r,1,k) = n - rt < n - e^{-1}(r-1) r^{\frac{r}{r-1}} (k-1)^{\frac{1}{r-1}}$.
\end{proof}

\comment{
A more concrete construction of the lower bound is as follows. Suppose that $n > rt$, otherwise it is trivial. Let $V(H) = V_1 \cup V_2 \cup \ldots \cup V_r$ be a partition of the vertex set of $H = H(n,r)$ into $r$ disjoint sets of cardinality at least $t$. First, consider these sets as the classes of an $r$-partite $r$-uniform hypergraph $F$. As in the proof of Theorem \ref{b(H,p)}, we can orient the edges of $F$ such that the vertices from $V_i$ have $0$ on the $i$-th coordinate of the degree vector. Now, for $ \le i \le r$, let $A_i \subseteq V_i$ be a set with $|A_i| = t$. In case that $t \ge r$, consider each of the $A_i$'s as a complete $r$-uniform hypergraph $H(t,r)$, otherwise there are no edges to consider. By a result given in \cite{CWY}, we can orient the edges contained in $A_i$ such that every vertex has all coordinates of the degree vector equal to ${t-1 \choose r-1} \frac{1}{r}$, which by the definition of $t$ is at most $k-1$. Hence both orientations together induce an orientation on the subhypergraph $H(rt,r)$ defined on the vertex set $A_1 \cup A_2 \cup \ldots A_r$ such that any vertex $v \in A_i$ has the $i$-th coordinate of the degree vector less than $k$. The only edges of $H$ still to be oriented are those in each $V_i$ which contain vertices from $A_i$ but are not fully contained in $A_i$. For such an edge $E$ place one of the vertices in $E \setminus A_i$ on position $i$, while the other vertices can be placed in any other position. In this manner, the $i$-th coordinate of the degree vector of the vertices in $A_i$ remains unchanged. Altogether, we obtain an orientation of the edges of $H(n,r)$ such that there are at least $rt$ vertices with at least one coordinate of the degree vector at most $k-1$. Hence $f(n,r,1,k) \le n - rt$ and we are done.
}
 
 \begin{cor}
Let $G$ be a graph on $n$ vertices with minimum degree $\delta(G) \ge (t-1)\frac{n}{t}$. Then $f(G,k) \ge (t-4k +2) \lfloor\frac{n}{t}\rfloor$.
\end{cor}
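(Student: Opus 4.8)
The plan is to reduce the statement to the complete-graph case already settled in Theorem \ref{f(n,r,1,k)} by first locating many disjoint cliques inside $G$ and then exploiting two monotonicity properties of $f$. First I would dispose of the trivial regime: if $t < 4k-2$ then $t-4k+2 < 0$, so the right-hand side is non-positive while $f(G,k)\ge 0$, and there is nothing to prove. So assume $t \ge 4k-2$; note also that $\delta(G)\le n-1$ forces $t \le n$, so the hypothesis is non-vacuous.

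The key structural input is a clique-tiling result, and this is where the minimum-degree hypothesis is consumed. I would invoke the Hajnal--Szemer\'edi theorem in its tiling form: a graph on $n$ vertices with $\delta(G) \ge (1-\tfrac1t)n = (t-1)\tfrac{n}{t}$ contains $\lfloor n/t\rfloor$ pairwise vertex-disjoint copies of $K_t$ (for $t=2$ this is a near-perfect matching via Dirac's condition, and for $t=3$ it is Corr\'adi--Hajnal). I expect this to be the main obstacle: not because the remaining argument is hard, but because the tight count $\lfloor n/t\rfloor$ cannot be reached by a naive greedy extraction of cliques and genuinely needs the Hajnal--Szemer\'edi machinery; a little care is also required with the floor when $t\nmid n$.

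Write $q=\lfloor n/t\rfloor$ and let $C_1,\dots,C_q$ be the disjoint copies of $K_t$. Let $G^*$ be the spanning subgraph of $G$ whose edges are exactly those lying inside the $C_j$, so that $G^*$ is the disjoint union of $q$ copies of $K_t$ together with $n-qt$ isolated vertices. I would then use two facts about $f(\cdot,k)=n-M(\cdot,k-1)$ (Theorem \ref{f in M}). First, \emph{monotonicity under edge deletion}: since $E(G^*)\subseteq E(G)$, the maximum average degree of any set $A$ computed in $G^*$ is at most the one computed in $G$; hence every pair $A_1,A_2$ admissible for $M(G,k-1)$ is admissible for $M(G^*,k-1)$, giving $M(G^*,k-1)\ge M(G,k-1)$ and therefore $f(G,k)\ge f(G^*,k)$. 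Second, the \emph{behaviour on a disjoint union of cliques}: a set $A$ with $\mad(A)\le 2(k-1)$ meets each $C_j$ in at most $2k-1$ vertices, because a subset of $K_t$ of size $s$ has $\mad=s-1$, while it may contain every isolated vertex; two disjoint such sets therefore together occupy at most $\min(t,4k-2)=4k-2$ vertices of each $C_j$ (here $t\ge 4k-2$ is used) plus all isolated vertices. Consequently $M(G^*,k-1)=q(4k-2)+(n-qt)$.

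Combining these, $f(G,k)\ge f(G^*,k)=n-M(G^*,k-1)=n-q(4k-2)-(n-qt)=q(t-4k+2)=(t-4k+2)\lfloor n/t\rfloor$, as required. Equivalently, one may observe that $f$ is additive over disjoint unions and apply $f(K_t,k)=t-4k+2$ together with $f(K_1,k)=0$ from Theorem \ref{f(n,r,1,k)} directly to the components of $G^*$; both routes yield the same count. The only genuinely nontrivial ingredient is the Hajnal--Szemer\'edi clique tiling; everything after it is a direct computation with $\mad$ and Theorem \ref{f in M}.
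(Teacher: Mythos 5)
Your proof is correct and takes essentially the same approach as the paper: both invoke the Hajnal--Szemer\'edi theorem to extract $\lfloor n/t\rfloor$ vertex-disjoint copies of $K_t$ and then feed in the complete-graph value $f(K_t,k)=t-4k+2$ from Theorem~\ref{f(n,r,1,k)}. The paper simply restricts an arbitrary orientation of $G$ to each clique (so each copy supplies $t-4k+2$ vertices with indegree and outdegree at least $k$), while you route the same count through $M(\cdot,k-1)$ and edge-deletion monotonicity via Theorem~\ref{f in M}; your explicit treatment of the trivial regime $t<4k-2$ is a minor point of rigor the paper leaves implicit.
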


\begin{proof}
Since $\delta(G) \ge ( t-1) \frac{n}{t}$ it follows from the Hajnal-Szemeredi Theorem \cite{HajSm} that $G$ has $\lfloor\frac{n}{t}\rfloor$ vertex-disjoint copies of $K_t$. Each copy of $K_t$ supplies, by Theorem \ref{f(n,r,1,k)}, at least $t - 4k +2$ vertices with indegree and outdegree at least $k$. Hence we have at least $(t-4k +2) \lfloor \frac{n}{t} \rfloor$ vertices with indegree and outdegree at least $k$.
\end{proof}

 Theorem \ref{f in M} allows us to deduce a Turan's type result for the maximum number of edges in an $r$-uniform hypergraph $H$ with $f(H,1,k) = 0$.

\begin{thm}
Let $H$ be an $r$-uniform hypergraph with $f(H,1,k) = 0$. Then $$e(H) \le {n \choose r} - r {n/r \choose r} + (k-1) n$$
and this bound is sharp for $n > (k-1)r^{2r-1}$ when $r^2$ divides $n$.
\end{thm}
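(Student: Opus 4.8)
The plan is to first translate the hypothesis $f(H,1,k)=0$ into a structural statement via Theorem \ref{f in M}. Since that theorem gives $f(H,1,k) = n - M(H,k-1)$, the assumption $f(H,1,k)=0$ is equivalent to $M(H,k-1) = n$; that is, $V$ admits a partition into $r$ (possibly empty) mutually disjoint sets $A_1, \ldots, A_r$ with $\mad(H[A_i]) \le r(k-1)$ for each $i$. Applying the definition of $\mad$ to $F = A_i$ gives $r\,e(H[A_i])/|A_i| \le r(k-1)$, hence $e(H[A_i]) \le (k-1)|A_i|$.

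For the upper bound I would classify the edges of $H$ with respect to this fixed partition. Every edge is an $r$-subset of $V$ and, since the $A_i$ are disjoint, it is either contained in exactly one part (an \emph{internal} edge) or meets at least two parts (a \emph{crossing} edge). The internal edges number at most $\sum_{i=1}^r e(H[A_i]) \le (k-1)\sum_i |A_i| = (k-1)n$, while the crossing edges form a subset of all $r$-subsets of $V$ that lie in no single $A_i$, of which there are $\binom{n}{r} - \sum_{i=1}^r \binom{|A_i|}{r}$. Summing, $e(H) \le \binom{n}{r} - \sum_i \binom{|A_i|}{r} + (k-1)n$. It then remains to minimise the subtracted term: since $x \mapsto \binom{x}{r}$ is convex and $\sum_i |A_i| = n$, a smoothing argument (move one vertex from a largest part to a smallest whenever two part-sizes differ by at least two decreases $\sum_i\binom{|A_i|}{r}$) shows the minimum is attained at the balanced partition, giving $\sum_i \binom{|A_i|}{r} \ge r\binom{n/r}{r}$ and hence the claimed bound.

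For sharpness, assuming $r^2 \mid n$ and $n > (k-1)r^{2r-1}$, I would build an extremal $H$ on the balanced partition $V = A_1 \cup \cdots \cup A_r$ with $|A_i| = n/r$. I put \emph{all} crossing edges into $H$, contributing exactly $\binom{n}{r} - r\binom{n/r}{r}$ edges. Inside each part, where $r \mid |A_i|$ by the hypothesis $r^2\mid n$, I take the union of $r(k-1)$ edge-disjoint perfect matchings of $H(n/r, r)$; these exist by Baranyai's theorem, which resolves $H(n/r,r)$ into $\binom{n/r-1}{r-1}$ perfect matchings, provided $\binom{n/r-1}{r-1} \ge r(k-1)$, and this is exactly what the size hypothesis $n > (k-1)r^{2r-1}$ guarantees. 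Each such union has $r(k-1)\cdot \tfrac{n/r}{r} = (k-1)\tfrac{n}{r}$ edges, so the internal edges total $(k-1)n$ and $e(H)$ equals the stated value. Because every vertex of $H[A_i]$ has degree exactly $r(k-1)$, for any $F \subseteq A_i$ one has $r\,e(H[F]) = \sum_{v\in F}\deg(v:F) \le r(k-1)|F|$, so $\mad(H[A_i]) \le r(k-1)$; thus $M(H,k-1) = n$ and, by Theorem \ref{f in M}, $f(H,1,k)=0$, confirming $H$ is admissible.

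The edge-classification count and the convexity step are routine. The main obstacle is the sharpness construction: one must realise exactly the prescribed number of internal edges while keeping $\mad(H[A_i]) \le r(k-1)$ on \emph{every} subset, not merely globally. The clean way is through edge-disjoint perfect matchings, and this is precisely where the two hypotheses enter — $r^2 \mid n$ makes $r \mid |A_i|$ so that perfect matchings exist, and $n > (k-1)r^{2r-1}$ makes enough of them pairwise edge-disjoint via Baranyai. A secondary point to verify carefully is that a union of perfect matchings genuinely has $\mad \le r(k-1)$ on all subsets, which, as indicated, follows from its uniform degree $r(k-1)$.
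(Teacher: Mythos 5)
Your proposal is correct and follows essentially the same route as the paper's own proof: translating $f(H,1,k)=0$ into $M(H,k-1)=n$ via Theorem \ref{f in M}, bounding internal edges by $(k-1)n$ and crossing edges via convexity of $\binom{x}{r}$, and realizing sharpness with all crossing edges plus $r(k-1)$ edge-disjoint perfect matchings inside each balanced part using Baranyai's theorem, with the size hypothesis ensuring enough matchings exist. Your explicit verification that the uniform degree $r(k-1)$ forces $\mad(H[A_i])\le r(k-1)$ on every subset is a detail the paper leaves implicit, but it is the same argument.
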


\begin{proof}
Since $f(H,1,k) = 0$, Theorem \ref{f in M} implies that $M(H,k-1) = n$. Let now $A_1 \cup A_2 \cup \ldots \cup A_r$ be $r$ vertex disjoint sets with $\mad(A_i) \le r(k-1)$, $1 \le i \le r$, realizing $M(H,k-1)$. Then $|A_1 \cup A_2 \cup \ldots \cup A_r| = n$. Then, by convexity, we have that $\sum_{i=1}^r {|A_i| \choose r} \ge r {n/r \choose r}$. Moreover, as $\frac{r e(A_i)}{|A_i|}\le \mad(A_i) \le r (k-1)$, it follows that $e(A_i) \le (k-1) |A_i|$ for all $i =1, 2, \ldots, r$. Hence the number of edges of $H$ is at most
\[
e(H) \le {n \choose r} - \sum_{i=1}^r {|A_i| \choose r} + \sum_{i=1}^r e(A_i) 
\le {n \choose r} - r {n/r \choose r} + (k-1) n.
\]
To see the sharpness, let $r^2$ divide $n$ and take $|A_i| = n/r$ for , $i = 1,2, \ldots, r$. Then $r$ divides $|A_i|$. By the well-known Theorem of Baranyai \cite{Ba}, the $r$-uniform hypergraph induced on $A_i$ has a $1$-factorization. In particular, take precisely $r(k-1)$ $1$-factors. Each $1$-factor contributes with $|A_i|/r$ edges and so we obtain exactly $r(k-1)|A_i|/r = (k-1)|A_i|$ edges in each $A_i$. Altogether, we obtain $(k-1)n$ edges and thus we have equality in the inequality given above. Taking $r(k-1)$ $1$-factors is possible if $r(k-1) \le  \frac{e(A_i)}{|A_i|/r} = {n/r \choose r} \frac{r^2}{n}$ (which gives the total number of $1$-factors). Hence $r(k-1) \le \frac{r^2}{n} {n/r \choose r}$, which gives $n(k-1) < r {n/r \choose r} < r{(\frac{e n}{r^2})}^r$. Taking logarithm we obtain $\log(n) + \log(k-1) <   \log r + r \log(\frac{e n}{r^2})  = \log r + r \log (en) - 2r \log r = r + r \log n  - (2r-1) \log r$. Hence $\log(k-1) < (r-1) \log n + r - (2r-1) \log r$. Rearranging we get  $\frac{\log(k-1)  - r + (2r-1)\log r}{r-1} <  \log n$. This is indeed fulfilled when $n > (k-1)r^{2r-1}$, since then $\log n > \log(k-1)r^{2r-1} = \log(k-1) + \log r^{2r-1} > \frac{\log(k-1)  - r + (2r-1)\log r}{r-1}$.
\end{proof}

In order to get more information from Theorem \ref{f in M} we need the following technical lemmas.
 
\begin{lem}\label{lemma3}
Let $H$ be an $r$-uniform hypergraph on $n$ vertices with $\mad(H) \le k$, where $k$ is a non-negative integer. Then:\\[-4ex]
\begin{enumerate}
\item[\rm{(1)}] For every subhypergraph $F$ of $H$, $\mad(F) \le k$. 
\item[\rm{(2)}] $H$ is $k$-degenerate.
\item[\rm{(3)}] $H$ is $(k +1)$-colorable. 
\item[\rm{(4)}] $\alpha(H) \ge n/(k+1)$.
\end{enumerate}
 \end{lem}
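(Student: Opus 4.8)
The plan is to prove the four parts in the stated order, since each feeds into the next, and to note that the whole lemma rests on just two simple facts: the monotonicity of $\mad$ under passing to subhypergraphs, and the handshake identity $\sum_v \deg(v) = r\,e$ for $r$-uniform hypergraphs. For part (1) I would argue directly from the definition of $\mad$. If $F$ is any subhypergraph of $H$ with vertex set $A \subseteq V(H)$, then for every nonempty $B \subseteq A$ the edges of $F$ contained in $B$ form a subset of the edges of $H$ contained in $B$, so the induced edge count of $F$ on $B$ is at most that of $H$ on $B$. Hence the corresponding ratio $r\,e(\cdot)/|B|$ for $F$ is bounded by the one for $H$, which in turn is at most $\mad(H) \le k$. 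Taking the maximum over all such $B$ gives $\mad(F) \le k$.

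For part (2), I would convert the edge-density bound into an average-degree bound. Fix a nonempty $A \subseteq V(H)$ and look at the induced subhypergraph $H[A]$. Since $H$ is $r$-uniform, each edge of $H[A]$ contributes $r$ to the degree sum, so $\sum_{v \in A} \deg(v:A) = r\, e(H[A])$. By part (1) (taking the whole set $A$ in the maximum defining $\mad(H[A])$) we have $r\, e(H[A])/|A| \le \mad(H[A]) \le k$, and therefore $\sum_{v \in A} \deg(v:A) \le k|A|$. Thus the average degree in $H[A]$ is at most $k$, which forces some vertex of $H[A]$ to have degree at most $k$. As this holds for every nonempty induced subhypergraph, $H$ is $k$-degenerate by definition.

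Parts (3) and (4) are then short consequences. Part (3) follows immediately from part (2) together with the hypergraph analogue of the Szekeres--Wilf theorem recalled in the introduction: a $k$-degenerate hypergraph satisfies $\chi(H) \le k+1$, so $H$ is $(k+1)$-colorable. For part (4), a proper $(k+1)$-coloring partitions $V(H)$ into at most $k+1$ color classes, and since no edge is monochromatic, no edge lies entirely inside a single class; hence each color class is an independent set in the sense defined (it meets every edge in at most $r-1$ vertices). By the pigeonhole principle the largest class has at least $n/(k+1)$ vertices, so $\alpha(H) \ge n/(k+1)$.

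None of the four steps presents a genuine obstacle. The only place requiring care is the translation in part (2) from the edge-density statement $\mad(H[A]) \le k$ to an average-degree statement, which is precisely where the $r$-uniform handshake identity $\sum_{v \in A}\deg(v:A) = r\,e(H[A])$ is needed; everything else is bookkeeping with the definitions of $\mad$, degeneracy, proper coloring, and independence.
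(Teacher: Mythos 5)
Your proof is correct and follows essentially the same route as the paper: monotonicity of $\mad$ for (1), the chain $\delta(F)\le \ov{d}(F)\le \mad(F)\le k$ (which is exactly your handshake argument) for (2), the hypergraph Szekeres--Wilf theorem for (3), and the pigeonhole bound $\alpha(H)\ge n/\chi(H)$ for (4). You simply spell out details the paper leaves implicit; no gaps.
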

 
\begin{proof}
(1) This is evident from the definition of $\mad(H)$. \\
(2) $H$ is $k$-degenerate since in every subhypergraph $F$ of $H$ (including H) $\delta(F) \le \ov{d}(F) \le \mad(F) \le \mad(H) \le k$.\\
(3) By the Szekeres-Wilf Theorem for hypergraphs (see \cite{SzWi}, the same proof as for graphs) if $H$ is $d$-degenerate, then $\chi(H)\le d +1$. Hence in our case $\chi(H) \le k +1$.\\
(4) Observe that $\alpha(H) \ge \frac{n}{\chi(H)} \ge \frac{n}{k +1}$.
\end{proof}

\begin{lem}\label{lemma4} Let $k \ge 0$ be an integer. The following assertions hold. \\[-4ex]
\begin{enumerate}
\item[\rm{(1)}] If $H$ is a $k$-degenerate $r$-uniform hypergraph, then $\mad(H) \le rk$.
\item[\rm{(2)}] Suppose $H$ is an $(r(k+1) - 1)$-degenerate hypergraph. Then $V(H)$ can be partitioned into $r$ vertex disjoint subsets $V(H) = \bigcup_{i=1}^r A_i$ such that the induced subhypergraph on $A_i$ is $k$-degenerate for each $1 \le i \le r$.
\end{enumerate}
\end{lem}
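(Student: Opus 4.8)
The plan is to treat the two parts separately, with part (1) serving as a short warm-up and part (2) carrying the real content.

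For part (1) I would first establish the auxiliary edge bound that every $k$-degenerate $r$-uniform hypergraph $K$ satisfies $e(K) \le k|V(K)|$. This follows by induction on $|V(K)|$: degeneracy supplies a vertex $v$ with $\deg(v) \le k$, and deleting $v$ removes exactly $\deg(v) \le k$ edges while leaving a $k$-degenerate hypergraph on one fewer vertex, so $e(K) = e(K-v) + \deg(v) \le k(|V(K)|-1) + k = k|V(K)|$. Since any induced subhypergraph $H[F]$ of a $k$-degenerate $H$ is itself $k$-degenerate, we get $e(F) \le k|F|$ for every nonempty $F \subseteq V(H)$, hence $re(F)/|F| \le rk$; taking the maximum over $F$ yields $\mad(H) \le rk$. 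This part is routine.

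For part (2), set $d = r(k+1) - 1$. First I would fix a degeneracy ordering $w_1, \dots, w_n$ of $V(H)$, obtained by repeatedly deleting a minimum-degree vertex, so that each $w_i$ lies in at most $d$ edges contained in $\{w_1, \dots, w_i\}$ (its \emph{back-edges}). The observation I will use repeatedly is that a back-degree ordering certifies degeneracy: in any hypergraph admitting an ordering in which every vertex has at most $k$ back-edges, the highest-indexed vertex of any induced subhypergraph has degree at most $k$ there (all its incident edges lie among lower-indexed vertices), so the hypergraph is $k$-degenerate. Thus it suffices to partition $V(H)$ into $A_1, \dots, A_r$ so that the ordering restricted to each $A_j$ has all back-degrees at most $k$.

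I would then assign vertices to classes greedily in the order $w_1, w_2, \dots, w_n$. When $w_i$ is processed, all other vertices of each of its back-edges have already been assigned; for each class $j$ let $c_j$ be the number of back-edges $E$ of $w_i$ with $E \setminus \{w_i\} \subseteq A_j$. A single back-edge $E$ has its $r-1$ remaining vertices fixed, so $E \setminus \{w_i\}$ lies in at most one class, whence $\sum_{j=1}^r c_j \le d = r(k+1)-1$. By pigeonhole some $c_j \le k$, and I place $w_i$ into that class. After the assignment, $c_j$ is exactly the number of back-edges of $w_i$ within $A_j$, so the restricted ordering has back-degree at most $k$ at $w_i$; applying the certification observation to each $A_j$ gives that $H[A_j]$ is $k$-degenerate. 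The main obstacle, such as it is, lies in setting up the right invariant: one must count \emph{back-edges} rather than full degree, and verify that each back-edge contributes to $\sum_j c_j$ at most once, which is precisely what makes the pigeonhole bound $d < r(k+1)$ bite. The tight choice $d = r(k+1)-1$ is exactly what forces $\min_j c_j \le k$, so the hypothesis is best possible for this argument.
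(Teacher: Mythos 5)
Your proof is correct and is essentially the paper's argument: part (1) is the same edge-counting induction giving $e(F) \le k|F|$, and in part (2) your greedy assignment along a degeneracy ordering is exactly the paper's induction on $|V(H)|$ unrolled — the paper deletes a vertex $v$ of degree at most $r(k+1)-1$, partitions $H-v$ recursively, and re-inserts $v$ into a class sharing at most $k$ edges with it via the same pigeonhole (each edge at $v$ counts toward at most one class). The only cosmetic difference is that you make explicit the observation that a bounded-back-degree ordering certifies degeneracy, which the paper leaves implicit when asserting that $A_i \cup \{v\}$ is $k$-degenerate.
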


\begin{proof}
(1) Since $H$ is $k$-degenerate, so does every subhypergraph $F$ of $H$. Hence for every subhypergraph $F$ we have by induction $e(F) \le k |F|$ and hence $r \; e(F)/|F| \le rk$ and $\mad(H) \le rk$.\\
(2) Suppose first that $|V(H)| \le r$. Clearly $H$ has at most one edge and is $1$-degenerate and as $1 \le r(0+1)-1 = r-1$, $H$ is $(r-1)$-degenerate and we can partition $V(H)$ into singletons which are $0$-degenerate. Suppose we have proven the result for $(r(k+1)-1)$-degenerate hypergraphs of order $n$. Let now $H$ be an at most $(r(k+1)-1)$-degenerate $r$-uniform hypergraph of order $n+1$. By $(r(k+1)-1)$-degeneracy and Lemma \ref{lemma3}(2), there is a vertex $v$ with $\deg(v) \le r(k+1) -1$ and such that $H^* = H - v$ is also $(r(k+1) - 1)$-degenerate. By induction, $V(H^*)$ can be partitioned into $r$ vertex disjoint subsets $A_i$, $1 \le i \le r$, all of them inducing $k$-degenerate subhypergraphs. Since $\deg(v) \le r(k+1)-1$, there is at least one $A_i$ sharing at most $k$ edges with $v$. But then $A_i \cup \{v\}$ is again a $k$-degenerate subhypergraph.
 \end{proof}

For the following, define $\beta_d(H)$ as the maximum cardinality $|F|$ over all subsets $F \subseteq V(H)$ such that the induced subhypergraph on $F$ is $d$-degenerate. Note that $\beta_0(H) = \alpha(H)$.
 
\begin{thm}\label{Thm3}
Let $H$ be an $r$-uniform hypergraph on $n$ vertices and $G$ a graph. Then the following assertions hold.
\begin{enumerate}
\item[\rm{(1)}] $f(H,1,k) = n - M(H,k-1) \ge n - \alpha(H)r(rk -r+1)$
\item[\rm{(2)}] $n - \beta_{rk - 1}(H) \ge f(H,1,k) \ge n- r \beta_{r(k-1)}(H)$. 
\item[\rm{(3)}] If G has average degree $\ov{d} \ge 4k - 2$, then $f(G,k) \le \frac{\ov{d} - 2k +1}{\ov{d} +1} n$.
\end{enumerate}
\end{thm}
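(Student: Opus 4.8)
The three parts share a common backbone: by Theorem \ref{f in M} we have $f(H,1,k) = n - M(H,k-1)$, so in each case the real work is to estimate $M(H,k-1)$, the largest union of $r$ pairwise vertex-disjoint sets $A_1,\dots,A_r$ with $\mad(H[A_i]) \le r(k-1)$. For part (1) the plan is to bound each $A_i$ through its independence number. Since $\mad(H[A_i]) \le r(k-1)$, Lemma \ref{lemma3}(4) gives $\alpha(H[A_i]) \ge |A_i|/(r(k-1)+1)$, and an independent set of $H[A_i]$ is still independent in $H$ (an edge of $H$ not contained in $A_i$ meets $A_i$ in at most $r-1$ vertices), so $\alpha(H[A_i]) \le \alpha(H)$. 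Combining, $|A_i| \le (r(k-1)+1)\,\alpha(H)$, and summing over the $r$ sets yields $M(H,k-1) \le r(rk-r+1)\,\alpha(H)$, which is exactly the stated bound.

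For part (2) I would play the two degeneracy lemmas against each other. The lower bound on $f$ (upper bound on $M$) uses Lemma \ref{lemma3}(2): $\mad(H[A_i]) \le r(k-1)$ forces $H[A_i]$ to be $r(k-1)$-degenerate, hence $|A_i| \le \beta_{r(k-1)}(H)$, and summing gives $M(H,k-1) \le r\,\beta_{r(k-1)}(H)$. The upper bound on $f$ (lower bound on $M$) runs the other way: take $F$ with $|F| = \beta_{rk-1}(H)$ and $H[F]$ being $(rk-1)$-degenerate. Writing $rk-1 = r((k-1)+1)-1$, Lemma \ref{lemma4}(2) partitions $F$ into $r$ disjoint parts each inducing a $(k-1)$-degenerate subhypergraph, and Lemma \ref{lemma4}(1) converts $(k-1)$-degeneracy back into $\mad \le r(k-1)$. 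These parts are admissible for $M$, so $M(H,k-1) \ge |F| = \beta_{rk-1}(H)$, giving $f(H,1,k) \le n - \beta_{rk-1}(H)$.

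Part (3) is the graph case $r=2$. I would start from part (2), which already gives $f(G,k) \le n - \beta_{2k-1}(G)$, so it suffices to prove $\beta_{2k-1}(G) \ge \tfrac{2kn}{\ov{d}+1}$. The induced $(2k-1)$-degenerate subgraph comes from a random permutation $\pi$ of $V(G)$: keep a vertex $v$ if at most $2k-1$ of its neighbours precede it in $\pi$. Processing any retained subset from its $\pi$-latest vertex shows the kept set $S$ induces a $(2k-1)$-degenerate subgraph, and $\mathbb{E}|S| = \sum_{v}\min\!\big(1,\tfrac{2k}{\deg(v)+1}\big)$ (the Alon--Kahn--Seymour bound). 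Once the average-degree conversion $\sum_{v}\min\!\big(1,\tfrac{2k}{\deg(v)+1}\big) \ge \tfrac{2kn}{\ov{d}+1}$ is in hand, the conclusion is immediate:
\[
f(G,k) \le n - \frac{2kn}{\ov{d}+1} = \frac{\ov{d}-2k+1}{\ov{d}+1}\,n.
\]

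The hard part is exactly that last inequality. The function $x \mapsto \min(1,\tfrac{2k}{x+1})$ has a concave corner at $x = 2k-1$ and is therefore not convex, so Jensen does not apply directly; in fact the inequality is \emph{false} for small average degree (for $k=1$ it already fails near $\ov{d}=1.9$). The hypothesis $\ov{d} \ge 4k-2 = 2(2k-1)$ is precisely the threshold at which it becomes true, with equality for the all-degree-$(4k-2)$ graph, so any valid argument must genuinely exploit this bound. I expect the cleanest route is to reduce to a one-variable extremal problem: minimize $\sum_v \min(1,\tfrac{2k}{\deg(v)+1})$ over integer degree sequences of fixed mean $\ov{d} \ge 4k-2$, show the minimizer is supported on two integer degree values, and verify the inequality there using integrality of the degrees (which is what closes the gap between $4k-2$ and the larger threshold that a naive tangent-line bound would demand).
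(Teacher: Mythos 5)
Your parts (1) and (2) are, step for step, the paper's own proof: in (1) you combine Lemma~\ref{lemma3}(4) with the observation that an independent set of $H[A_i]$ remains independent in $H$, giving $|A_i| \le (r(k-1)+1)\,\alpha(H)$ for each class realizing $M(H,k-1)$, and sum over the $r$ classes; in (2) you prove the sandwich $\beta_{rk-1}(H) \le M(H,k-1) \le r\,\beta_{r(k-1)}(H)$ exactly as the paper does, using Lemma~\ref{lemma4}(2) (applied with $k-1$ in place of $k$, via $rk-1 = r((k-1)+1)-1$) together with Lemma~\ref{lemma4}(1) for one side and Lemma~\ref{lemma3}(2) for the other, then converting through Theorem~\ref{f in M}.

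Part (3) follows the paper's route as well --- apply part (2) with $d = 2k-1$ and lower-bound $\beta_{2k-1}(G)$ --- but here the paper does essentially no work: it quotes the theorem of Alon, Kahn and Seymour \cite{AKS} asserting that $\ov{d}(G) \ge 2d$ implies $\beta_d(G) \ge \frac{(d+1)n}{\ov{d}+1}$, and substitutes $d = 2k-1$. You instead set out to prove that theorem. Your random-permutation argument does correctly give $\beta_{2k-1}(G) \ge \sum_{v}\min\bigl(1,\tfrac{2k}{\deg(v)+1}\bigr)$ (this is the main theorem of \cite{AKS}), and your diagnosis of the remaining step is accurate: the inequality $\sum_{v}\min\bigl(1,\tfrac{2k}{\deg(v)+1}\bigr) \ge \tfrac{2kn}{\ov{d}+1}$ is not a convexity statement --- for $k=1$, $\ov{d}=2$, a mix of isolated vertices and vertices of fractional degree $1+\sqrt{2}$ yields $(4\sqrt{2}-5)n \approx 0.657n < \tfrac{2}{3}n$, so integrality of the degrees is essential. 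But you never actually prove it; ``I expect the cleanest route is\dots'' is a program, not an argument, and this degree-sequence extremal problem is precisely the nontrivial content of the corollary in \cite{AKS}. So, judged as a self-contained proof, your part (3) has a genuine gap at its final step; judged at the paper's own level of rigor, your structure is exactly right, and the gap closes immediately by citing \cite{AKS} (as the paper does) rather than re-deriving it.
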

 
\begin{proof}
(1) Let $A_i$, $i = 1, \ldots, r$ be $r$ vertex disjoint classes such that $\mad(A_i) \le r(k - 1)$ and $|\bigcup_{i=1}^r A_i| = M(H,k-1)$. By Lemma \ref{lemma3}(4), $A_i$ contains an independent set $X_i$ of cardinality $|X_i| \ge \frac{|A_ i|}{r(k-1) +1} = \frac{|A_i|}{rk - r +1}$. Clearly,
\[
r \alpha(H) \ge \sum_{i=1}^r |X_i| \ge \sum_{i=1}^r \frac{|A_i|}{rk - r +1} = \frac{M(H,k-1)}{rk - r +1}.
\]
Hence, $\alpha(H)r(rk- r +1) \ge M(H,k-1)$. From this we get 
\[
f(H,1,k) = n - M(H,k-1) \ge n - \alpha(H)r(rk- r+1)\]
(2) We have to show that $\beta_{rk-1}(H) \le M(H,k-1) \le r \beta_{r(k-1)}(H)$. \\
(i) Let $F \subseteq V(H)$ be a set such that its induced subhypergraph of $H$ is $(rk - 1)$-degenerate  and such that $|F| = \beta_{rk-1}(H)$. By Lemma \ref{lemma4}(2), $V(H)$ can be partitioned into $r$ vertex disjoint subsets $A_i$ each of them inducing a $(k-1)$-degenerate subhypergraph. By Lemma \ref{lemma4}(1), $\mad(A_i) \le r(k-1)$ and hence $|F| = |\bigcup_{i-1}^r A_i | \le M(H,k-1)$. \\
(ii) Let $A_i $, $i = 1, \ldots, r$ be $r$ vertex disjoint classes such that $\mad(A_i) \le r(k-1)$ and $\bigcup_{i=1}^r A_i| = M(H,k-1)$. By Lemma \ref{lemma3}(2), $A_i$ is $r(k-1)$-degenerate and hence $|A_i| \le \beta_{r(k-1)}(H)$ and $M(G,k-1) = |\cup A_i| \le r \beta_{r(k-1)}(H)$.
Hence with (i) and (ii) and Theorem \ref{f in M}, we have
\[
n - r \beta_{r(k-1)}(H) \le f(H,1,k) = n - M(H, k-1) \le n - \beta_{rk-1}(H)
\]
and we are done.\\
(3) By a result given in \cite{AKS}, if $\ov{d}(G) \ge 2d$, then $\beta_d(G) \ge \frac{d+1}{\ov{d} +1}n$. 
In our case set $d = 2k-1$, so if $\ov{d} \ge 2(2k-1) = 4k - 2$, then $\beta_{2k-1}(G) \ge 2kn/(\ov{d} +1)$.
Hence, by Theorem \ref{Thm3}(2), \[
f(G,k) \le n - \beta_{2k-1}(G) \le n-\frac{2kn}{\ov{d}+1} = \frac{\ov{d} - 2k +1}{\ov{d} +1} n.
\]
\end{proof}

\subsection{Lower bound using the chromatic number}

Theorem \ref{low_chi} is a generalization for $r$-uniform hypergraphs of the case $r=2$ proved in \cite{ABGLS}.

 \begin{thm}\label{low_chi}
Let $H$ be an $r$-uniform hypergraph on $n$ vertices. Then $f(H,1,k) \ge \chi(H) - r (r(k-1)+1)$. This bound is sharp in case that $r=2$.
\end{thm}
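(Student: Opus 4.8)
The plan is to reformulate the claim in terms of $M(H,k-1)$ and then build an explicit proper coloring of $H$. By Theorem \ref{f in M} we have $f(H,1,k) = n - M(H,k-1)$, so the asserted inequality $f(H,1,k) \ge \chi(H) - r(r(k-1)+1)$ is equivalent to the upper bound $\chi(H) \le r(r(k-1)+1) + \bigl(n - M(H,k-1)\bigr)$. Thus it suffices to exhibit a proper coloring of $H$ using at most $r(r(k-1)+1) + (n-M(H,k-1))$ colors.

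First I would fix vertex-disjoint sets $A_1, \dots, A_r$ with $\mad(A_i) \le r(k-1)$ realizing $M(H,k-1)$, and write $A = \bigcup_{i=1}^r A_i$, so that $|A| = M(H,k-1)$. Since $\mad(A_i) \le r(k-1)$, Lemma \ref{lemma3}(3) gives that each induced subhypergraph $H[A_i]$ is $(r(k-1)+1)$-colorable. I would then color the $r$ classes with $r$ mutually disjoint palettes, each of size $r(k-1)+1$, using a proper coloring of $H[A_i]$ on palette $i$; in total this uses $r(r(k-1)+1)$ colors on $A$.

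The key verification is that this is a proper coloring of $H[A]$: an edge $E \subseteq A$ either lies inside a single class $A_i$, in which case it is non-monochromatic because the coloring of $H[A_i]$ is proper, or (since $A$ is a disjoint union of the $A_i$) it meets at least two classes, in which case it receives colors from two disjoint palettes and is automatically non-monochromatic. To finish the coloring of all of $H$, I would assign each of the $n - M(H,k-1)$ vertices of $V \setminus A$ its own fresh color. Any edge meeting $V \setminus A$ then contains a vertex whose color appears on no other vertex, so (using $r \ge 2$) such an edge cannot be monochromatic either. Hence $\chi(H) \le r(r(k-1)+1) + (n-M(H,k-1))$, and rearranging together with Theorem \ref{f in M} yields the stated bound.

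The only genuinely delicate point — and the step I would guard most carefully — is the properness check for cross-class edges and for edges touching $V \setminus A$; everything else is bookkeeping on the number of colors. For the sharpness claim when $r=2$, I would take $G = K_n$, where the bound reads $f(G,k) \ge \chi(G) - 4k+2$; since $\chi(K_n)=n$ and Theorem \ref{f(n,r,1,k)} gives $f(K_n,k) = n - 4k + 2$, equality holds, so the inequality is tight.
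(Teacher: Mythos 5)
Your proof is correct, but it takes a genuinely different route from the paper. The paper never invokes Theorem \ref{f in M} here: it works directly with orientations, first showing that if $\chi(H) > r(r(k-1)+1)$ then \emph{every} orientation contains a vertex with all coordinates of its degree vector at least $k$ (partition the vertices by the first coordinate that is $< k$, apply the hypergraph Hakimi theorem to get $\mad(A_i)\le r(k-1)$ on each class, and color with $r$ disjoint palettes of size $r(k-1)+1$ to reach a contradiction), and then runs an induction on $\chi(H)$, deleting one such vertex at a time to accumulate $\chi(H)-r(r(k-1)+1)$ good vertices. You instead treat Theorem \ref{f in M} as a black box, observe that the claim is equivalent to the chromatic upper bound $\chi(H)\le r(r(k-1)+1)+\bigl(n-M(H,k-1)\bigr)$, and prove that bound by an explicit coloring: disjoint palettes of size $r(k-1)+1$ on the classes realizing $M(H,k-1)$, plus a fresh singleton color for each vertex outside their union. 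The core trick --- $r$ disjoint palettes on classes of small maximum average degree, justified by Lemma \ref{lemma3}(3) --- is the same in both arguments, and your properness checks (cross-class edges see two palettes; edges meeting $V\setminus A$ contain a uniquely colored vertex, so $r\ge 2$ suffices) are exactly the points that need care and are handled correctly. What your version buys is the elimination of the induction and of any direct manipulation of orientations, since all the orientation theory is already packaged in Theorem \ref{f in M}; what the paper's version buys is independence from Theorem \ref{f in M} and a slightly stronger-feeling conclusion, namely an iterative procedure extracting the required vertices inside any fixed orientation. Your sharpness argument for $r=2$ via $K_n$ coincides with the paper's.
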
 

\begin{proof}
Let $H$ be a $r$-uniform hypergraph with chromatic number $\chi(H) \ge r(r(k-1)+1)$, otherwise there is nothing to prove. We will show first that in any orientation $D(H)$ of $H$ there is at least one vertex $v$ with $\deg_i(v) \ge k$ for $i = 1, 2, \ldots, r$. Suppose there is an orientation $D = D(H)$ of $H$ such that for every vertex $x \in V$ there is an index $j$ such that $\deg_j(x) < k$. Let $A_1$ be the set of vertices $u$ with $\deg_1(u) <k$. Define recursively $A_j$ as the set of vertices in $u \in V \setminus \cup_{i=1}^{j-1} A_i$ with $\deg_j(u) <k$, where some $A_i$ are possibly empty. If $A_i$ is not empty, then for all vertices $v \in A_i$ we have that $\deg_i(v) <k$, in the induced orientation of the $r$-uniform subhypergraph induced by $A_i$ still all vertices $v \in A_i$ have $\deg_i(v) < k$. By Theorem \ref{Hakimi_hyper}, this is possible if and only if $\mad(A_i) \le r(k-1)$. By Lemma \ref{lemma3} (3), it follows that $H[A_i]$ is $r(k-1)+1$-colorable. Hence we can take for each nonempty $A_i$ $r(k-1)+1$ different and new colors and we obtain $\chi(H) \le r (r(k-1)+1)$, which is a contradiction. 

So, every orientation $D(H)$ of $H$ contains a vertex $v$ with $\deg_i(v) \ge k$ for all $i = 1, 2, \ldots, r$. Now let us prove the theorem by induction on $\chi(H)$. The case $k = 1$ is easy, so we assume that $k \ge 2$. For $\chi(H) = r (r(k-1)+1) +1$, as we already showed, there is indeed one vertex, say $u$, with $\deg_i(u) \ge k$ for all $i = 1, 2, \ldots, r$. Now assume we have proved the theorem for $\chi(H) = t \ge r (r(k-1)+1) +1$. Let now $H$ have chromatic number $\chi(H) = t+1$ and let $D(H)$ be any orientation of $H$. Consider $H^* = H - u$, where evidently $\chi(H^*) \ge t$. If $\chi(H^*) = t$, then by the induction hypothesis in every orientation of $H^*$, including the orientation induced by $D(H)$, there are at least $t - r (r(k-1)+1)$ vertices with all coordinates of their degree vector at least $k$. Then, together with $u$, there are at least $t -r (r(k-1)+1) +1 = (t+1) - r (r(k-1)+1)$ vertices with all coordinates at least $k$ and we are done. Hence suppose that $\chi(H^*) = t+1$. Then again there is a vertex $u' \in V(H^*)$ with all its coordinates at least $k$ and we may consider $H^{**} = H^* - u'$ and repeat. Hence we have proved that there are at least $\chi(H) - r (r(k-1)+1)$ vertices with all coordinates at least $k$ and we obtain $f(H,1,k) \ge \chi(H) - r (r(k-1)+1)$.

To prove the sharpness for $r=2$, consider the complete graph on $n$ vertices $K_n$, where clearly $\chi(K_n) = n$ and, by Theorem \ref{f(n,r,1,k)}, $f(K_n,k) = n - 4k + 2$. Hence $f(K_n,k) = n - 4k + 2 = \chi(H) - 2 (2(k-1)+1)$.
\end{proof}

\comment{
We will construct an orientation with exactly $n - 4k+2$ vertices of indegree and outdegree at least $k$. Let $A$ and $B$ be two disjoint sets each of cardinality $2k-1$ and let $C$ be another set of $n - 4k +2$ vertices. These sets form the vertex set of $K_n$. Orient all edges from $A$ to $B$. Consider $A$ and $B$ as $K_{2k-1}$ and apply eulerian orientation (equitable) such that inside $A$ and inside $B$ the outdegree and the indegree are equal to $k-1$. In the resulting orientation of $K_{4k-2}$ vertices of $A$ have degree vector $(3k-2, k-1)$ and vertices of $B$ have degree vector $(k-1, 3k-2)$. Orient all edges from vertices of $A$ to vertices of $C$ and all edges from vertices of $C$ to vertices of $B$ and orient inside $C$ arbitrarily. In this orientation of $K_n$ the vertices of $A$ have indegree $k-1$, the vertices of $B$ have outdegree $k-1$ and hence at most $n - 4k +2$ have both indegree and outdegree at least $k$. Hence $f(K_n,k) =n-4k+2$.
}

\subsection{Concrete results for families of graphs}

Define $\alpha_2(G) = \max { |A \cup B|}$, where the maximum is taken over all vertex disjoint independent sets $A$ and $B$. Clearly, $\alpha_2(G) = M(G,0)$. Observe that trivially $2 \alpha(G) \ge \alpha_2(G) \ge \alpha(G)$ with the lower bound attained if and only if $G = \ov{K_n}$. With this definition we have the following corollary to Theorem \ref{Thm3}.

\begin{cor}\label{coro_graphs}
Let $G$ be a graph on $n$ vertices. Then the following holds.\\[-4ex]
\begin{enumerate}
\item[{\rm(1)}] $n - \alpha(G) \ge f(G,1) \ge n - 2 \alpha(G)$ and both bounds are sharp.
\item[{\rm(2)}] If there are two vertex-disjoint independent sets of cardinality $\alpha(G)$, then $f(G,1) = n - 2 \alpha(G)$. 
\item[{\rm(3)}] If $\chi(G) = t$, then $f(G,1) \le \lfloor \frac{(t-2)}{t}n \rfloor$ and this bound is sharp.
 \end{enumerate}
\end{cor}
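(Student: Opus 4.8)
The entire corollary rests on a single identity. Applying Theorem~\ref{f in M} with $k=1$ gives $f(G,1)=f(G,1,1)=n-M(G,0)$, and since $M(G,0)=\alpha_2(G)$ by the definition preceding the statement, we obtain
\[
f(G,1)=n-\alpha_2(G).
\]
Combined with the chain $\alpha(G)\le \alpha_2(G)\le 2\alpha(G)$ recorded just before the corollary, every part reduces to a short deduction, so the plan is to establish this identity first and then read off each item.

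For part (1) I would substitute the two outer bounds of the chain into the identity, which yields $n-2\alpha(G)\le n-\alpha_2(G)=f(G,1)\le n-\alpha(G)$ immediately. For sharpness of the lower bound take $G=K_n$, where $\alpha=1$ and $\alpha_2=2$, so $f(K_n,1)=n-2=n-2\alpha(K_n)$; for the upper bound take $G=\ov{K_n}$, which is exactly the equality case $\alpha_2=\alpha$ from the note, giving $f=0=n-\alpha$. For part (2), the hypothesis supplies two vertex-disjoint independent sets each of size $\alpha(G)$, so $\alpha_2(G)\ge 2\alpha(G)$; together with the always-valid $\alpha_2(G)\le 2\alpha(G)$ this forces $\alpha_2(G)=2\alpha(G)$, whence $f(G,1)=n-2\alpha(G)$.

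Part (3) is where the only genuine work lies (and here one assumes $t\ge 2$, the case $t=1$ being the edgeless graph). Fix a proper $t$-coloring of $G$ and let $c_1\ge c_2\ge\cdots\ge c_t$ be the sizes of its color classes, so each class is independent and $\sum_i c_i=n$. The two largest classes are disjoint independent sets, hence $\alpha_2(G)\ge c_1+c_2$, and it suffices to prove $c_1+c_2\ge 2n/t$. I would argue by averaging: since $c_2$ is at least the average of the $t-1$ smallest classes, $c_2\ge (n-c_1)/(t-1)$, and since $c_1\ge n/t$, the increasing-in-$c_1$ expression $\frac{(t-2)c_1+n}{t-1}$ gives $c_1+c_2\ge \frac{(t-2)c_1+n}{t-1}\ge \frac{2n}{t}$. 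Therefore $f(G,1)=n-\alpha_2(G)\le n-2n/t=(t-2)n/t$, and as $f(G,1)$ is an integer we may take the floor. For sharpness, with $t\mid n$ take the balanced complete $t$-partite graph on $n$ vertices: its chromatic number is $t$, each independent set sits inside a single part of size $n/t$, and two disjoint independent sets in distinct parts total at most $2n/t$, so $\alpha_2=2n/t$ and $f=(t-2)n/t$, matching the bound.

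The only step demanding care is the inequality $c_1+c_2\ge 2n/t$ in part (3); everything else is a direct substitution into the identity $f(G,1)=n-\alpha_2(G)$ and an appeal to the sandwich $\alpha(G)\le\alpha_2(G)\le 2\alpha(G)$.
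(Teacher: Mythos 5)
Your proof is correct and follows essentially the same route as the paper: both rest on the identity $f(G,1)=n-M(G,0)=n-\alpha_2(G)$ from Theorem~\ref{f in M}, the sandwich $\alpha(G)\le\alpha_2(G)\le 2\alpha(G)$ for parts (1) and (2), and the identical averaging estimate $c_1+c_2\ge\frac{(t-2)c_1+n}{t-1}\ge\frac{2n}{t}$ for part (3). The only (harmless) divergences are in the sharpness witnesses: you use $K_n$ where the paper uses $tK_m$ in part (1), and in part (3) you compute $\alpha_2=2n/t$ for the balanced complete $t$-partite graph directly from the identity, which is slightly cleaner than the paper's argument via a $K_t$-factor, where each of the $n/t$ disjoint copies of $K_t$ forces $t-2$ vertices of positive in- and outdegree in every orientation.
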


\begin{proof}
(1) Observe that trivially $2 \alpha(G) \ge \alpha_2(G) \ge \alpha(G)$ and since $\alpha_2(G) = M(G,0)$, substituting in Theorem \ref{Thm3}, we get the desired result. Another proof of the lower bound is as follows. The Gallai-Milgram Theorem (see \cite{GaMi}) states that every oriented graph $G$ with independence number $\alpha(G)$ has a vertex partition into $\alpha(G)$ oriented paths. So let $P_1, P_2, \ldots ,P_{\alpha(G)}$ be such a partition. Let $P_1, \ldots ,P_t$ be the paths consisting of a unique vertex and $P_{t+1}, \ldots, P_{\alpha(G)}$ the paths consisting each of at least two vertices. Clearly, each of the paths $P_{t+1}, \ldots, P_{\alpha(G)}$ contribute with $|P_i|-2$ vertices whose coordinates are at least $1$ (those vertices except the head and the tail of the path). Hence, in every orientation $D$ of $G$, there are at least $\sum_{i = t+1}^{\alpha(G)} |P_i| - 2$ vertices with coordinates at least $1$. However,
\begin{eqnarray*}
\sum_{i = t+1}^{\alpha(G)} |P_i| - 2 &=& |P_1|+ \ldots +|P_t| - t+ \sum_{i = t+1}^{\alpha(G)} |P_i|-2\\
                                                              &=&  \sum_{i = 1}^{\alpha(G)} |P_i| -2( \alpha(G)-t)-t\\
                                                              &=& n-2 \alpha(G)+ t \ge n- 2 \alpha(G).
 \end{eqnarray*}
So, in every orientation of $G$ there are at least $n - 2 \alpha(G)$ vertices with indegree and outdegree at least $1$.

Note that the upper bound is attained if and only if $G = \ov{K_n}$. Otherwise, if $G$ has size at least one, then $n - \alpha(G) - 1\ge f(G,1)$. Further, the lower bound is attained for example by the graph $G = tK_m$ where $\alpha(G) = t$ and $f(G,1) = t(m - 2)$, since by Theorem \ref{f(n,r,1,k)} $f(K_m,1) = m-2$. Hence $f(G,1) = tm - 2t = n - 2 \alpha(G)$.

(2) In this case $M(G,0) = 2 \alpha(G)$ and the result follows. 

(3) Suppose that $t \ge 2$, otherwise there is nothing to prove. Let $A_1, \ldots, A_t$ be the color classes  such that $|A_1| \ge \ldots \ge |A_t|$. Then $|A_1| \ge \frac{n}{t}$ and $|A_2| \ge \frac{n - |A_1|}{t-1}$. Hence $\alpha_2(G) \ge |A_1 \cup A_2| \ge |A_1| + \frac{n- |A_1|}{t-1} = \frac{(t-2)|A_1| +n }{t-1}$ and then $n - \frac{(t-2)|A_1| +n }{t-1} \ge n - \alpha_2(G) = f(G,1)$. This implies that $\frac{(n - |A_1|)(t-2)}{t-1} \ge f(G,1)$ and, since$|A_1| \ge \frac{n}{t}$, we obtain 
\[
\frac{(n - n/t)(t-2)}{t-1} = \frac{(t-1)n(t-2)}{(t-1)t} = \frac{n(t-2)}{t}\ge f(G,1)
\]
and hence  $f(G,1) \le \lfloor (t-2)\frac{n}{t} \rfloor$.
Another proof of this upper bound is as follows. Let $A_1, \ldots ,A_t$ be the partition of $V$ into $\chi(G)= t$ independent sets and assume $|A_1| \ge \ldots \ge |A_t|$. Orient all edges from $A_1$ to $V \setminus A_1$ and all the edges into $A_2$ from $V \setminus A_2$. Clearly, the vertices in $A_1$ have indegree exactly $0$ and the vertices in $A_2$ have outdegree exactly $0$. Clearly, $|A_ 1 \cup A_2| \ge 2 \frac{n}{t}$ and hence in the above orientation there are at most $(t-2)n/t$ vertices with indegree and outdegree at least $1$.

That this bound is sharp can be seen by the complete $t$-partite graph with all parts equal to $n/t$. Then clearly the graph has a $K_t$-factor containing $n/t$ vertex-disjoint copies of $K_t$. In every orientation of $G$ every copy of $K_t$ supplies $t - 2$ vertices with indegree and outdegree at least $1$. Altogether we have (by vertex disjointness) at least $\lfloor (t-2)n/t \rfloor$ vertices with indegree and outdegree at least $1$.
\end{proof}


If $G = K_{n_1,n_2,\ldots, n_t}$, i.e. a a complete $t$-partite graph with partition sets $V_i$ of cardinality $|V_i| = n_i$, $1 \le i \le t$, then we are able to compute $f(G,k)$. For this purpose we will first prove the following lemma.

\begin{lem}\label{lem_t-partite}
Let $G$ be a complete $t$-partite graph $K_{n_1,n_2,\ldots, n_t}$ with partition sets $V_i$ of cardinality $|V_i| = n_i$ for $1 \le i \le t$, $t \ge2$, where $n_1 \ge n_2 \ge \ldots \ge n_t \ge 1$. Among all sets $A \subseteq \bigcup_{i=1}^t V_i = V$ of cardinality $a$, where $0 < a < n = \sum_{i=1}^t n_i$, the minimum number of edges in the subgraph induced by $A$ is
\[
\min_{A \subseteq V, |A|=a} e(G[A]) = \sum_{1 \le i < j \le q-1} n_in_j + \left(a - \sum_{i=1}^{q-1} n_i\right)  \sum_{i=1}^{q-1} n_i,
\]
where $q$ is the index for which $ \sum_{i=1}^{q-1} n_i \le a < \sum_{i=1}^q n_i$.
\end{lem}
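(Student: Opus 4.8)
The plan is to reduce this geometric minimization to a purely numerical optimization over the \emph{profile} of $A$, and then solve that optimization by a majorization argument. For a set $A \subseteq V$ write $a_i = |A \cap V_i|$ for its distribution among the parts, so that $\sum_{i=1}^t a_i = a$ and $0 \le a_i \le n_i$. Since $G$ is complete $t$-partite, two vertices of $A$ are adjacent exactly when they lie in different parts, whence
\[
e(G[A]) = \sum_{1 \le i < j \le t} a_i a_j = \tfrac12\Big(a^2 - \sum_{i=1}^t a_i^2\Big).
\]
Thus minimizing $e(G[A])$ over all $A$ with $|A| = a$ is equivalent to \emph{maximizing} $\sum_{i=1}^t a_i^2$ over integer vectors $(a_1,\dots,a_t)$ with $\sum_i a_i = a$ and $0 \le a_i \le n_i$. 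First I would record this equivalence.

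Next I would introduce the greedy profile $g = (n_1, \dots, n_{q-1}, s, 0, \dots, 0)$, where $s = a - \sum_{i=1}^{q-1} n_i$ and $q$ is the index from the statement; this is feasible and, because $0 \le s < n_q \le n_{q-1}$, it is already sorted in nonincreasing order. The heart of the argument is to show that $g$ majorizes every feasible profile $(a_i)$. Writing $A_k$ for the sum of the $k$ largest entries of a profile, one has $A_k \le a$ trivially, and $A_k \le \sum_{i=1}^k n_i$ because each chosen entry is bounded by its own capacity and $n_1,\dots,n_k$ are the $k$ largest capacities; for $g$ these two bounds are met with equality, so its partial sums are exactly $\min\{a,\ \sum_{i=1}^k n_i\}$. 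Hence $g$ dominates every feasible profile in the majorization order, and since $x \mapsto x^2$ is convex, Karamata's inequality yields $\sum_i a_i^2 \le \sum_i g_i^2$. Therefore $g$ maximizes $\sum_i a_i^2$ and so minimizes $e(G[A])$.

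Finally I would compute the value. Plugging $g$ into $e(G[A]) = \sum_{i<j} g_i g_j$ splits into edges among the $q-1$ full parts, contributing $\sum_{1 \le i < j \le q-1} n_i n_j$, and edges between the $s$ chosen vertices of $V_q$ and the full parts, contributing $s \sum_{i=1}^{q-1} n_i = \big(a - \sum_{i=1}^{q-1} n_i\big)\sum_{i=1}^{q-1} n_i$, while the $s$ vertices inside $V_q$ span no edges. This is precisely the claimed formula.

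The step I expect to be the main obstacle is the optimality of the greedy profile. A naive ``move one vertex into a larger part'' exchange can get stuck at suboptimal configurations (for instance, in $K_{3,2}$ with $a=3$ the profile $(1,2)$ admits no strictly improving single swap yet is not optimal), so I would deliberately avoid a bare local-search argument and route everything through the global majorization inequality, whose crux is the capacity bound $A_k \le \sum_{i=1}^k n_i$ on the sum of the $k$ largest entries.
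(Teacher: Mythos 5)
Your proof is correct, but it takes a genuinely different route from the paper's. The paper argues directly on a minimizing set: writing $a_i = |A^* \cap V_i|$, it first rearranges the profile so that $a_1 \ge a_2 \ge \cdots \ge a_t$ (feasible because the capacities $n_i$ are sorted the same way), and then applies exactly the single-vertex exchange you were wary of --- moving one vertex from the last nonempty part $V_s$ into the first non-full part $V_q$ --- showing the edge count changes by $a_s - a_q - 1 \le -1$ whenever $q < s$, so any minimizer must already have the greedy shape (full parts, then one partial part, then empty parts), after which the formula is read off. Your $K_{3,2}$ example with profile $(1,2)$ does not defeat this, because the paper's exchange is only ever applied to a \emph{sorted} profile: $(1,2)$ is first rearranged to $(2,1)$, where the exchange is strictly improving; the sorting step is precisely what rescues the local argument. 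Your route is instead global: reduce to maximizing $\sum_i a_i^2$, check that the greedy profile's partial sums attain the universal bound $\min\{a,\ \sum_{i=1}^k n_i\}$ on the sum of the $k$ largest entries of any feasible profile, and invoke Karamata's inequality. This buys a cleaner, case-free proof that generalizes immediately (the greedy profile simultaneously maximizes $\sum_i \phi(a_i)$ for every convex $\phi$, not just $\phi(x)=x^2$), at the cost of citing majorization; the paper's exchange is more elementary and self-contained and characterizes all minimizers up to relabeling. Incidentally, the paper's displayed computation has a sign typo ($e(G[A']) = a_s + a_q - 1 + e(G[A])$ should read $e(G[A']) = a_s - a_q - 1 + e(G[A])$), though the conclusion it then draws is the corrected one.
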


\begin{proof}
Let $A^* \subseteq \bigcup_{i=1}^t V_i$ be a set with minimum number of edges $e(G[A^*])$ among all subsets of cardinality $a$ and let $a_i = |V_i \cap A^*|$ and let $a_i = |V_i \cap A^*|$. Since $e(G[A^*]) = \sum_{1 \le i < j \le t} a_ia_j$, we can choose a set $A^*$ such that $a_1 \ge a_2 \ge \ldots \ge a_t$. Now let $q$ be the minimum index $i$ for which $a_i \neq n_i$ and let $s$ be the maximum index $i$ for which $a_i \neq 0$. Evidently, $s-1 \le q \le s+1$. If $q=s$ or if $s+1 = q$, we are done. So suppose that $q < s$. Since $a_q \neq n_q$ and $a_s \neq 0$, we can take vertices $x \in V_q \setminus A_q$ and $y \in A_s$. Define now $A'_q = A_q \cup \{x\}$, and $A'_s = A_s \setminus \{y\}$ and $A'_i = A_i$ for all $i \neq q, s$. Let $A' = \cup_{i=1}^t A'_i$ and $a'_i = |A_i|$. The we have
 \begin{eqnarray*}
 e(G[A']) & = & \sum_{1 \le i < j \le t} a'_i a'_j \\
                & = & a'_q \sum_{j \neq q, s} a'_j + a'_q a'_s + a'_s \sum_{j \neq q, s} a'_j + \sum_{i < j, i,j \neq q,s} a'_i a'_j\\
                & = & (a_q + 1) \sum_{j \neq q, s} a_j + (a_q+1) (a_s -1) + (a_s - 1) \sum_{j \neq q, s} a_j + \sum_{i < j, i,j \neq q,s} a_i a_j\\
                & = & a_s + a_q - 1 + \sum_{1 \le i < j \le t} a_i a_j = a_s + a_q - 1 + e(G[A]) 
 \end{eqnarray*}
 But, since $q < s$, we have that $a_s - a_q \le 0$ and hence $e(G[A']) < e(G[A])$, which is a contradiction to the minimality of $e(G[A^*])$. Hence, $q=s$ or $s+1 = q$ and thus $a_1 = n_i$ for $i \le q-1$, $a_i < n_i$ for $i = q$, and $a_i = 0$ for $i \ge q+1$. This implies that
\begin{eqnarray*}
e(G[A^*]) =  \sum_{1 \le i < j \le q-1} n_in_j + a_q  \sum_{i=1}^{q-1} n_i.
\end{eqnarray*}
Further, as $a_q = a - \sum_{i=1}^{q-1} n_i$ and $q$ is exactly the index for which $ \sum_{i=1}^{q-1} n_i \le a < \sum_{i=1}^q n_i$, we obtain that
\begin{eqnarray*}
e(G[A^*]) 
      & =& \sum_{1 \le i < j \le q-1} n_in_j + \left(a - \sum_{i=1}^{q-1} n_i\right)  \sum_{i=1}^{q-1} n_i, 
      \end{eqnarray*}
for every set $A$ of cardinality $a$. 
\end{proof}

\begin{cor}\label{cor_t-partite}
Let $G$ be a complete $t$-partite graph $K_{n_1,n_2,\ldots, n_t}$ with partition sets $V_i$ of cardinality $|V_i| = n_i$, where $1 \le i \le t$, $t \ge2$, and $n_1 \ge n_2 \ge \ldots \ge n_t \ge 1$. If $A \subseteq \bigcup_{i=1}^t V_i$ is a set of cardinality $a$ such that $ \sum_{i=1}^{q-1} n_i \le a < \sum_{i=1}^q n_i$ for an integer $q \ge 1$, then, for any $\ell \in \{1,2,\ldots, q-1\}$
\[
e(G[A]) \ge (a - n_{\ell}) n_{\ell}.
\]
\end{cor}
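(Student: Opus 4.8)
The plan is to reduce the statement to the explicit minimum-edge formula of Lemma \ref{lem_t-partite} and then verify a one-line inequality. Since the claimed bound must hold for \emph{every} set $A$ of cardinality $a$, and since $e(G[A]) \ge \min_{|A'|=a} e(G[A'])$, it suffices to establish the inequality for that minimum value. Thus I would set $S = \sum_{i=1}^{q-1} n_i$ and, invoking Lemma \ref{lem_t-partite}, bound $e(G[A])$ from below by $\sum_{1 \le i < j \le q-1} n_i n_j + (a - S) S$. The goal then becomes showing this quantity is at least $(a - n_\ell) n_\ell$.

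The key step is to isolate the contribution of the fixed index $\ell \in \{1,\dots,q-1\}$ inside the double sum. I would split
\[
\sum_{1 \le i < j \le q-1} n_i n_j = n_\ell (S - n_\ell) + \sum_{\substack{i<j \\ i,j \ne \ell}} n_i n_j \ge n_\ell (S - n_\ell),
\]
discarding the nonnegative remainder. Combining this with the term $(a-S)S$ reduces the claim to $n_\ell(S - n_\ell) + (a-S)S \ge (a - n_\ell)n_\ell$.

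The final step is a short algebraic manipulation: the difference between the two sides simplifies to $S(n_\ell + a - S) - a n_\ell$, which factors cleanly as $(a - S)(S - n_\ell)$. Both factors are nonnegative — the first because $a \ge S$, which is exactly the lower bound on $a$ in the hypothesis $\sum_{i=1}^{q-1} n_i \le a$, and the second because $\ell \le q-1$ forces $n_\ell \le S$. Hence the inequality follows. I expect the only point requiring care, rather than a genuine obstacle, is keeping track of these two sign conditions and confirming the factorization; everything else is bookkeeping.

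As a conceptual check (and an alternative, algebra-free derivation) I would note the combinatorial content: in the minimizing configuration the set $A$ contains the \emph{entire} part $V_\ell$ (since $\ell \le q-1$), so the $n_\ell(a - n_\ell)$ edges joining $V_\ell$ to the remaining $a - n_\ell$ vertices of $A$ already account for the bound, while the edges lying inside $A \setminus V_\ell$ supply the nonnegative slack $(a-S)(S-n_\ell)$. This structural viewpoint recovers the same inequality without invoking the explicit formula.
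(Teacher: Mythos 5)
Your proposal is correct and follows essentially the same route as the paper: invoke Lemma \ref{lem_t-partite}, discard the terms of the double sum not involving $\ell$ to keep $n_\ell\bigl(\sum_{i\le q-1,\, i\ne \ell} n_i\bigr)$, and then use the two sign conditions $a \ge \sum_{i=1}^{q-1} n_i$ and $\sum_{i=1}^{q-1} n_i \ge n_\ell$. The only (cosmetic) difference is the last step: the paper relaxes $\left(a - \sum_{i=1}^{q-1} n_i\right)\sum_{i=1}^{q-1} n_i$ to $\left(a - \sum_{i=1}^{q-1} n_i\right) n_\ell$ so the bound collapses termwise to $(a-n_\ell)n_\ell$, whereas you compute the difference of the two sides and factor it as $\left(a - \sum_{i=1}^{q-1} n_i\right)\left(\sum_{i=1}^{q-1} n_i - n_\ell\right) \ge 0$, which is the same estimate in a different order.
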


\begin{proof}
Let $\ell \in \{1,2,\ldots, q-1\}$. Then using Lemma \ref{lem_t-partite}, we obtain
\begin{eqnarray*}
e(G[A]) & \ge &  \sum_{1 \le i < j \le q-1} n_in_j + \left(a - \sum_{i=1}^{q-1} n_i\right)  \sum_{i=1}^{q-1} n_i\\
               & \ge & n_{\ell} \sum_{1 \le i \le q-1, i \neq \ell} n_i + \left(a - \sum_{i=1}^{q-1} n_i\right) n_{\ell} = (a - n_{\ell}) n_{\ell}.
\end{eqnarray*}
\end{proof}

\begin{thm}
Let $n_1 \ge n_2 \ge \ldots \ge n_t$, where $t \ge 3$, $n_1, n_2 \ge k^2 - k + 1$ and either $n_3 \ge 2k-2$ or $n_3 \ge k-1$ and $n_4 \ge k-1$. Then $f(K_{n_1,n_2,\ldots, n_t}, k) = \sum_{i=3}^t n_i - 2k + 2$.
\end{thm}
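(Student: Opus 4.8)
The plan is to invoke Theorem~\ref{f in M}, which for a graph (the case $r=2$) reads $f(G,k)=n-M(G,k-1)$, where $n=\sum_{i=1}^t n_i$ and $M(G,k-1)$ is the largest size of $A_1\cup A_2$ over vertex-disjoint $A_1,A_2\subseteq V$ with $\mad(G[A_j])\le 2(k-1)$. It therefore suffices to show $M(G,k-1)=n_1+n_2+2k-2$, since then $f(G,k)=n-(n_1+n_2+2k-2)=\sum_{i=3}^t n_i-2k+2$. For a set $A$ I write $a_i=|A\cap V_i|$, so that $G[A]$ is again complete multipartite with $e(G[A])=\sum_{i<j}a_ia_j$, and I set $m(A)=\max_i a_i=\alpha(G[A])$.

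For the lower bound I would display the extremal pair. Set $A_1=V_1\cup S_1$ and $A_2=V_2\cup S_2$, where $S_1,S_2$ are disjoint $(k-1)$-element sets lying in $V_3\cup\dots\cup V_t$, each inside a single part: if $n_3\ge 2k-2$ place both in $V_3$, and otherwise (so that $n_3\ge k-1$ and $n_4\ge k-1$) take $S_1\subseteq V_3$ and $S_2\subseteq V_4$. Each $S_j$ is then independent and disjoint from $V_1\cup V_2$, so $G[A_j]=K_{n_j,k-1}$, whose maximum average degree equals $2n_j(k-1)/(n_j+k-1)<2(k-1)$. Hence $M(G,k-1)\ge|A_1|+|A_2|=n_1+n_2+2k-2$.

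For the matching upper bound I would first bound a single sparse set. Let $\mad(G[A])\le 2(k-1)$ and $m=m(A)$, attained on $V_\pi$; every vertex of $A$ outside $V_\pi$ is adjacent to all $m$ vertices of $A\cap V_\pi$, so $e(G[A])\ge(|A|-m)m$, and with $e(G[A])\le(k-1)|A|$ this gives $(|A|-m)(m-(k-1))\le(k-1)m$. If $m\ge k^2-k+1$ then $(k-1)m/(m-(k-1))=(k-1)+(k-1)^2/(m-(k-1))<k$, so $|A|\le m+(k-1)$. In the complementary range $m\le k^2-k$ I would combine the universal bound $|A|\le m+2(k-1)$, obtained from $\sum_i a_i^2\le m|A|$ and $\sum_i a_i^2=|A|^2-2e(G[A])\ge|A|(|A|-2(k-1))$ and already giving $|A|\le k^2$ once $m\le k-1$, with the sharper estimate $|A|\le\phi(m)$ for $m\ge k$, where $\phi(m)=m+(k-1)m/(m-(k-1))$ attains its minimum in the interior of $[k,k^2-k]$ and satisfies $\phi(k)=\phi(k^2-k)=k^2$; hence $|A|\le k^2$ whenever $m\le k^2-k$.

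It remains to combine these for the pair $A_1,A_2$ realizing $M(G,k-1)$, with largest-part indices $\pi_1,\pi_2$ and values $m_1=m(A_1)$, $m_2=m(A_2)$. If $\pi_1\ne\pi_2$ then $n_{\pi_1}+n_{\pi_2}\le n_1+n_2$, while if $\pi_1=\pi_2=\pi$ then disjointness forces $m_1+m_2\le n_\pi\le n_1$. In each situation I split on whether each $m_j$ is large ($\ge k^2-k+1$, so $|A_j|\le m_j+k-1$) or small ($\le k^2-k$, so $|A_j|\le k^2$), and in every resulting case the hypotheses $n_1,n_2\ge k^2-k+1$ close the arithmetic: when both are large one gets $|A_1|+|A_2|\le n_1+n_2+2(k-1)$ directly; when one is large and one small the needed inequality is exactly $k^2\le n_2+(k-1)$; and when both are small it is $2k^2\le n_1+n_2+2(k-1)$, both holding with equality at the threshold. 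Thus $M(G,k-1)\le n_1+n_2+2k-2$, matching the construction. I expect the main obstacle to be exactly the treatment of the ``spread-out'' sets with small largest part, where the universal bound $|A|\le m+2(k-1)$ is too weak and one must extract from the density constraint that such a set has at most about $k^2$ vertices; the threshold $k^2-k+1$ on $n_1,n_2$ is calibrated to make precisely these cases balance.
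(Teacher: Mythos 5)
Your proposal is correct, and for the hard direction it takes a genuinely different route from the paper's. Both arguments reduce via Theorem~\ref{f in M} to showing $M(G,k-1)=n_1+n_2+2k-2$, and your lower-bound construction ($V_1$ and $V_2$ each augmented by a $(k-1)$-set placed inside a single further part) is exactly the paper's. For the upper bound, however, the paper first proves Lemma~\ref{lem_t-partite}, an exact formula (via an exchange argument) for the minimum number of edges induced by an $a$-set of a complete multipartite graph, deduces Corollary~\ref{cor_t-partite}, and then runs a case analysis parametrized by the overflows $|A|=n_1+r$, $|B|=n_2+s$, $b_1=n_2+p$, finishing with a quadratic-function estimate. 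You instead prove a self-contained dichotomy for a single set $A$ with $\mad(G[A])\le 2(k-1)$: with $m=\max_i|A\cap V_i|$, either $m\ge k^2-k+1$ and then $|A|\le m+k-1$, or $m\le k^2-k$ and then $|A|\le k^2$. This uses only the trivial bound $e(G[A])\ge(|A|-m)m$, the bound $\sum_i a_i^2\le m|A|$, and convexity of $\phi(m)=m+(k-1)m/(m-k+1)$ together with $\phi(k)=\phi(k^2-k)=k^2$; I checked these computations and they are right. Your combination step (splitting on whether the two sets have their largest intersections in the same part, and on large/small $m_j$) closes every case using precisely $n_1,n_2\ge k^2-k+1$, so the whole argument is sound. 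What each route buys: yours avoids the exchange-argument lemma entirely, is more modular, and makes transparent where the threshold $k^2-k+1$ comes from (it is where $\phi$ returns to the value $k^2$); the paper's route produces the exact extremal edge-count lemma for complete multipartite graphs, which has independent interest and pins down the structure of extremal sets. One cosmetic slip: for $k=1$ your strict inequality $2n_j(k-1)/(n_j+k-1)<2(k-1)$ reads $0<0$; of course $\mad(G[A_j])=0\le 0$ is all that is needed there.
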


\begin{proof}
Let $V = \cup_{i=1}^t V_i$ be the partition of the vertex set of $K_{n_1,n_2,\ldots, n_t}$ such that $|V_i| = n_i$. Let $A$ and $B$ be two vertex disjoint subsets of $V$ such that $\mad(A) \le 2k-2$ and $\mad(B) \le 2k-2$ and let $a_i = |A \cap V_i|$, $b_i = |B \cap V_i|$ for $1 \le i \le t$ and $|A| \ge |B|$. We will show that $|A \cup B| \le n_1 + n_2 + 2k -2$.

Suppose first that $|A| = n_1 + r$ for an integer $r > 0$. Since $|A| > n_1$, we can set $q \ge 2$ in Corollary \ref{cor_t-partite}  and so $e(G[A]) \ge (n_1 + r - n_1) n_1 = r n_1$. Further, as $\frac{2 e(G[A])}\le \mad(A) \le {n_1+r}\le 2k-2$, we have that
\[
r n_1 \le e(G[A]) \le (k-1)(n_1 + r),
\]
which, together with $n_2 \ge k^2 - k + 1$, implies that 
\[
r \le \frac{(k-1) n_1}{n_1-k+1} = k-1 + \frac{k^2 - 2k+1}{n_2-k+1} \le k-1 + \frac{k^2 - 2k+1}{k^2-2k+2}.
\]
Hence, $|A| \le n_1+ k-1$.

If $|B| \le n_1 + n_2 + 2k-2 - |A|$, it follows that $|A \cup B| \le n_1+n_2 + 2k-2$ and we are done. Hence assume that $|B| = n_2 + s$ for an integer $s \ge n_1 - |A| + 2k - 1 \ge k$. If $b_1 \le n_2$, it follows by Corollary \ref{cor_t-partite}, setting $n'_1 = n_2$, $n'_i = n_i$ for $i \ge 2$ and reordering the indices if necessary, that $e(G[B]) \ge s n_2$ and, analogously as above, we obtain that $s \le k-1$ and thus $|A \cup B| \le n_1 + n_2 + 2k-2$ and we are done. 

Assume now for contradiction that  $b_1 = n_2 + p$ for an integer $p$ with $s \ge p >0$. By  Corollary \ref{cor_t-partite}, setting $n'_1 = n_2 + p$, $n'_i = n_i$ for $i \ge 2$ and reordering the indices if necessary, it follows that $e(G[B]) \ge (n_2 + p) (s-p)$. Since $\frac{2 e(G[B])}{n_2+s} \le \mad(B) \le 2k-2$, we have that
\[
(n_2 + p) (s-p) \le e(G[B]) \le (k-1)(n_2 + s),
\]
which, together with $n_2 \ge k^2 - k + 1$, implies that 
\begin{eqnarray*}
s &\le& \frac{n_2(k-1)+ p (n_2 + p)}{n_2+p-k+1} = \frac{(n_2+p-k+1)(k-1) + p(n_2+p-k+1)+(k-1)^2}{n_2+p-k+1}\\
   &= & p+k-1 + \frac{(k-1)^2}{n_2+p-k+1} \\
   &\le& p+k-1 + \frac{(k-1)^2}{k^2-2k+2+p} =  p+k-1 + \frac{(k-1)^2}{(k-1)^2+p+1}.
\end{eqnarray*}
Hence, $s \le p+k-1$ and thus $p \ge s-k+1 \ge n_1 - |A| + k$. Now consider $e(G[A])$. Since $a_1 \le n_1 - b_1 \le n_1 - n_2 - p$, we have that $\sum_{i=2}^t a_i = |A| - a_1 \ge n_1 +k - p  -a_1 \ge n_2 + k$. Thus by Corollary \ref{cor_t-partite}, setting $n''_1 = n_1 -n_2 -p$ , $n''_i = n_i$ for $i \ge 2$ and reordering the indices if necessary, it follows that $e(G[A]) \ge (|A|-n_2) n_2$. Note that $k^2-k+1 \le n_2 \le n_1-p \le  |A| -k$ and thus $|A| \ge k^2+1$. Define now the function $f(x) = (|A|-x) x = |A| x - x^2$, where $k^2-k+1 \le x \le |A| -k$.  Evidently, $f(x)$ takes its minimum either when $x = k^2-k+1$ or when $x =  |A| -k$. Since $f(k^2-k+1) = |A|(k^2-k+1) - (k^2-k+1)^2$ and $f(|A|-k) = |A|k - k^2$, to prove that $f(k^2-k+1) \ge  f(|A| -k)$ is equivalent to
\[
|A| \ge \frac{(k^2-k+1)^2-k^2}{(k-1)^2} = \frac{((k-1)^2 +k)^2 - k^2}{(k-1)^2} = \frac{(k-1)^4 + 2k(k-1)^2}{(k-1)^2} = k^2+1,
\]
which is certainly true. Hence
 \[
 e(G[A]) \ge f(|A|-k) = (|A| - k) k \ge |A| (k-1) + |A| - k^2 \ge |A| (k-1) + 1,
 \]
 which contradicts the assumption that  $\frac{2 e(G[A])}{|A|} \le \mad(A) \le 2k-2$. Thus $b_1 > n_2$ is not possible.
 
Altogether it follows that $M(G,k-1) \le n_1 + n_2 + 2k -2$. To see that this bound is sharp, take $A = V_1 \cup A'$ and $B = V_2 \cup B'$, where $A'$ and $B'$ are disjoint sets such that $|A'|=|B'| = k-1$ and either $A' \cup B' \subseteq V_3$ in case that $n_3 \ge 2k-2$ or $A' \subseteq V_3$ and $B' \subseteq V_4$ in case that $2k-2 > n_3 \ge k-1$ and $n_4 \ge k-1$. Observe that $e(G[A]) = n_1 (k-1)$ and $e(G[B]) = n_2 (k-1)$ and thus $\mad(A), \mad(B) \le k-1$ and, since $|A \cup B| = n_1 + n_2 + 2k -2$, it realizes the bound.

Hence we have proven that $M(G,k-1) = n_1 + n_2 + 2k -2$ and thus, with Theorem \ref{f in M}, we obtain $f(G,k) = n - n_1 - n_2 - 2k +2 =  \sum_{i=3}^t n_i - 2k + 2$.
\end{proof}

A \emph{perfect} graph $G$ is defined as a graph such that any of its induced subgraphs $G^*$, including $G$ itself, has $\chi(G^*) = \omega(G^*)$, where $\omega(G^*)$ stands for the clique number of $G^*$. Let us denote by $h(G,k_3)$ the cardinality of a minimum set of vertices $T$ which hits all triangles of $G$, i.e. such that every triangle contained in $G$ has at least one vertex in $T$. More information about these ``triangle-hitting sets'' can be found under the concepts of ``clique covering'' or ``transversals in hypergraphs'', see \cite{Al, An, EGT, Tu}. Now we can state the following theorem.

\begin{thm}\label{perfect}
If $G$ is a graph, then $f(G,1) \ge  h(G,K_3)$. Moreover, if $G$ is perfect, then $f(G,1) =  h(G,K_3)$.
\end{thm}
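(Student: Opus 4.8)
The plan is to reformulate both quantities in a complementary (vertex-deletion) form and then reduce everything to the elementary fact that a union of two independent sets induces a bipartite, hence triangle-free, subgraph. Throughout, let $\tau_3(G)$ denote the maximum order of an induced triangle-free subgraph of $G$.

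First I would record the two complementary descriptions. Specializing Theorem \ref{f in M} to graphs ($r=2$) with $k=1$ gives $f(G,1) = n - M(G,0) = n - \alpha_2(G)$, where we use that $M(G,0) = \alpha_2(G)$ is the maximum cardinality of a union of two vertex-disjoint independent sets. On the other side, a vertex set $T$ hits every triangle of $G$ exactly when $V(G) \setminus T$ induces a triangle-free subgraph; taking $T$ of minimum size and $V(G) \setminus T$ of maximum size gives $h(G,K_3) = n - \tau_3(G)$.

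For the inequality $f(G,1) \ge h(G,K_3)$ it then suffices to show $\alpha_2(G) \le \tau_3(G)$, and this is immediate: if $S = A \cup B$ with $A,B$ disjoint independent sets realizing $\alpha_2(G)$, then $G[S]$ is bipartite with parts $A$ and $B$, hence triangle-free, so $|S| \le \tau_3(G)$. This yields $n - \alpha_2(G) \ge n - \tau_3(G)$, i.e.\ $f(G,1) \ge h(G,K_3)$. For the equality when $G$ is perfect I would prove the reverse inequality $\tau_3(G) \le \alpha_2(G)$. Let $S$ realize $\tau_3(G)$, so $G[S]$ is triangle-free and therefore $\omega(G[S]) \le 2$. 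Since $G[S]$ is an induced subgraph of the perfect graph $G$, perfection gives $\chi(G[S]) = \omega(G[S]) \le 2$, so $G[S]$ is bipartite; its two color classes are disjoint independent sets with union $S$, whence $|S| \le \alpha_2(G)$. Combined with the previous paragraph this gives $\alpha_2(G) = \tau_3(G)$ and thus $f(G,1) = n - \alpha_2(G) = n - \tau_3(G) = h(G,K_3)$.

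The argument is short, and the only substantive step is spotting the right reformulation via Theorem \ref{f in M} and the complementary form of a triangle-hitting set. The single point where perfection is genuinely needed — and the step to state carefully — is the passage from ``triangle-free'' ($\omega \le 2$) to ``bipartite'' ($\chi \le 2$) for the extremal induced subgraph; for non-perfect $G$ this fails, and the inequality $f(G,1) \ge h(G,K_3)$ can be strict, as witnessed by $C_5$, where $\tau_3(C_5)=5$ but $\alpha_2(C_5)=4$, giving $f(C_5,1)=1>0=h(C_5,K_3)$.
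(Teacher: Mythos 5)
Your proof is correct. The equality part for perfect graphs is essentially the paper's argument: both pass from a maximum induced triangle-free subgraph (equivalently, the complement of a minimum triangle transversal) to a bipartition via $\chi = \omega \le 2$, and both then invoke Theorem \ref{f in M} in the form $f(G,1) = n - M(G,0) = n - \alpha_2(G)$. Where you genuinely differ is in the general inequality $f(G,1) \ge h(G,K_3)$: you derive it from Theorem \ref{f in M} together with the observation that a union of two disjoint independent sets induces a triangle-free subgraph (i.e.\ $\alpha_2(G) \le \tau_3(G)$ in your notation), whereas the paper proves it directly from an orientation realizing $f(G,1)$ --- in any orientation, every triangle contains a vertex whose indegree and outdegree are both at least $1$, so the set $S$ of such vertices is already a triangle transversal, giving $h(G,K_3) \le |S| = f(G,1)$. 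The paper's route for this direction is more elementary (it does not need the Hakimi-based machinery behind Theorem \ref{f in M}) and yields the slightly stronger local fact that the doubly-positive vertex set of \emph{every} orientation hits all triangles; your route is more uniform, reducing the entire theorem to the single comparison $\alpha_2(G) \le \tau_3(G)$, which makes the precise role of perfection (upgrading $\omega \le 2$ to $\chi \le 2$ for the extremal induced subgraph) and the possibility of strict inequality for imperfect graphs (your $C_5$ example, which is correct) completely transparent.
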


\begin{proof}
Let $D$ be an orientation of $G$ realizing $f(G,1)$ and let $S$ be the set of vertices $v$ having $\deg^+(v) \ge 1$ and $\deg^-(v) \ge 1$ under orientation $D$. As evidently every triangle of $G$ has to have at least one vertex in $S$, the set $S$ hits all triangles of $G$ and thus $h(G,K_3) \le |S| = f(G,1)$.

Let now $G$ be perfect and let $T$ be a minimum set hitting all triangles of $G$. Then the graph $G^*= G - T$ is triangle-free and, as $G$ is perfect, $G^*$ is perfect as well and thus $\chi(G^*) = \omega(G^*) \le 2$. Hence $G^*$ is bipartite and we obtain $M(G,0) \ge |V(G^*)| = |V \setminus T| = n - h(G,K_3)$. With the inequality proven above for general graphs, it follows $f(G,1) =  h(G,K_3)$.
\end{proof}

A planar graph is called {\it maximal planar}, for short \emph{MP}, if the addition of any edge would destroy that property. A {\it maximal outerplanar graph}, abbreviated \emph{MOP}, is a triangulation of the polygon. By the Four Color Theorem, every MP graph is $4$-colorable. Also, it is well-known that every MOP graph is $3$-colorable. Moreover, every MP graph on $n \ge 3$ vertices has exactly $3n-6$ edges, while every triangle-free planar graph on $n \ge 3$ vertices has at most $2n - 4$ edges.

\begin{thm} Let $G$ be a graph.\\[-4ex]
\begin{enumerate}
\item[\rm{(1)}] If $G$ is a MP on $n \ge 4$ vertices, then $2 \le f(G,1) \le \frac{n}{2}$ and both bounds are sharp.
\item[\rm{(2)}] If $G$ is a MOP on $n \ge 3$, then $1 \le f(G,1) \le \frac{n}{3}$ and both bounds are sharp.
\end{enumerate}
\end{thm}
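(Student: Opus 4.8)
The plan is to reduce both statements to the identity $f(G,1) = n - \alpha_2(G) = n - M(G,0)$ coming from Theorem \ref{f in M} (take $k=1$ and recall $\alpha_2(G) = M(G,0)$), and then to attack the \emph{upper} bounds through the chromatic number and the \emph{lower} bounds through triangle-hitting sets. For the upper bounds I would invoke Corollary \ref{coro_graphs}(3): setting $t = \chi(G)$ gives $f(G,1) \le \lfloor \frac{t-2}{t}\,n\rfloor$, and since $\frac{t-2}{t}$ is increasing in $t$, every MP graph (which is $4$-colorable, and $3$-chromatic at least because it contains a triangle) satisfies $f(G,1)\le \lfloor n/2\rfloor\le n/2$, while every MOP graph (which has $\chi=3$) satisfies $f(G,1)\le \lfloor n/3\rfloor\le n/3$.

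For the lower bounds I would use the general inequality $f(G,1)\ge h(G,K_3)$ from Theorem \ref{perfect}. A MOP on $n\ge 3$ vertices has a triangular face, so any triangle-hitting set is nonempty and $f(G,1)\ge h(G,K_3)\ge 1$. For an MP graph on $n\ge 4$ vertices I would show $h(G,K_3)\ge 2$ by an edge count: if a single vertex $v$ hit every triangle, then $G-v$ would be a triangle-free planar graph on $n-1\ge 3$ vertices and hence have at most $2(n-1)-4 = 2n-6$ edges; but $e(G-v) = (3n-6)-\deg(v) \ge (3n-6)-(n-1) = 2n-5$, a contradiction. Thus $f(G,1)\ge 2$.

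It remains to establish sharpness, which is where the real work lies, since it needs explicit extremal families in both directions. For the lower bounds I would exhibit triangulations that become bipartite after deleting a small hitting set: the fan (a hub joined to a path) is a MOP whose hub hits all triangles and whose deletion leaves a path, so $\alpha_2(G)=n-1$ and $f(G,1)=1$; the octahedron $K_{2,2,2}$, and more generally the bipyramid over an even cycle, is an MP graph whose two poles can be removed to leave an even cycle, so $\alpha_2(G)=n-2$ (the value $n-2$ being forced from above by $h(G,K_3)\ge 2$) and $f(G,1)=2$. For the upper bounds I would construct triangulations admitting a clique factor: an MP graph tiled by $n/4$ vertex-disjoint copies of $K_4$ (nest $K_4$'s and triangulate the annuli between their outer triangles) and a MOP tiled by $n/3$ vertex-disjoint triangles (triangulate a $3m$-gon so that each face $\{3i-2,3i-1,3i\}$ is present). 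In any orientation a $K_4$ has at most one source and one sink, hence forces at least $2$ vertices of positive in- and out-degree, while a triangle forces at least $1$; summing over the disjoint cliques yields $f(G,1)\ge n/2$ and $f(G,1)\ge n/3$ respectively, matching the colorability upper bounds.

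The main obstacle is precisely the sharpness of the upper bounds: one must verify that the clique-factored triangulations are genuinely maximal (outer)planar and that the disjoint cliques cover all the vertices, so that the per-clique counts $2$ and $1$ sum \emph{exactly} to $n/2$ and $n/3$. The clean interaction between the forcing count (in the spirit of Corollary \ref{coro_graphs}) and the colorability bound is what pins the extremal value exactly; checking planarity and maximality of the nested-$K_4$ construction for all $n\equiv 0\pmod 4$ and of the triangle-factored MOP for all $n\equiv 0\pmod 3$ is the routine-but-delicate step I would carry out last.
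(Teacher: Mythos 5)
Your proposal is correct and follows essentially the same route as the paper: the upper bounds via Corollary \ref{coro_graphs}(3) and $4$- resp.\ $3$-colorability, the MP lower bound via $f(G,1)\ge h(G,K_3)$ plus the planar triangle-free edge count $e(G-v)\le 2(n-1)-4$, and sharpness via fan-type triangulations for the lower bounds and $K_4$- resp.\ $K_3$-factored triangulations for the upper bounds. The only deviations are cosmetic: you certify the lower-bound extremal examples through $f(G,1)=n-M(G,0)$ (bipyramid over an even cycle, fan) where the paper exhibits explicit orientations of closely related graphs (double fan over a path, wheel-like MOP), and you get the MOP bound $f(G,1)\ge 1$ from triangle-hitting rather than from the chromatic bound of Theorem \ref{low_chi}.
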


\begin{proof}
(1) Since there are MP graphs $G$ with $\chi(G) = 3$, using the chromatic lower bound, we get only $f(G,1) \ge 1$, which is exact for $n = 3$. For $n \ge 4$, assume there is an MP graph $G$ with $f(G,1) = 1$. Then by Theorem \ref{perfect} there is a vertex $v$ hitting al triangles of $G$ and thus $G^* = G-v$ is a triangle-free planar graph on $n-1 \ge 3$ vertices. Therefore, $e(G^*) \le 2(n-1) - 4 = 2n -6$. But now it follows that $n-1 \ge \deg(v) = e(G) - e(G^*) \ge 3n - 6 - (2n - 6) = n$, which is a contradiction. Therefore, $f(G,1) \ge 2$. To see the sharpness, consider the graph consisting of two adjacent vertices $u$ and $v$ and $n-2$ vertices forming a path $x_1x_2\ldots x_{n-2}$ such that both $u$ and $v$ are adjacent to every vertex on the path. Now orient the edges from $x_i$ out, when $i = 0 \,(\rm{mod} 2)$, and in, when $i = 1 \, (\rm{mod} 2)$, and lastly orient $u$ to $v$. Then $u$ and $v$ are the only vertices with indegree and outdegree at least $1$. 

As every planar graph is $4$-colorable, by Corollary \ref{coro_graphs}(3) we have $f(G,1) \le \lfloor \frac{(t-2)n}{t} \rfloor \le \frac{n}{2}$, where $\chi(G) = t \le 4$. To see the sharpness, take $n/4$ vertex disjoint $K_4$'s and join them by edges to a MP graph $G$. For this graph, every $K_4$ contributes with two vertices of indegree and outdegree at least $1$ and hence $f(G,1) \ge 2 n/4 = n/2$.

(2) Let $G$ be a MOP graph. Since $\chi(G) =3$, Theorem \ref{low_chi} implies $f(G,1) \ge \chi(G) - 2 = 1$ and hence the lower bound follows. This can be realized by a graph $G$ consisting of a cycle $C_n$ and such that one of its vertices, say $z$, is adjacent to all other vertices. Orient the edges of the cycle in such a way that every vertex but possibly $z$ (when $n$ is odd) has either indegree or outdegree $0$. The remaining edges are oriented either to or from $z$ in such a way that the indegree or the outdegree of the other vertices of the cycle remains being $0$.  Thus $z$ is the only vertex having $\deg^+(z) > 0$ and $\deg^-(z) >0$ and hence $f(G,1) =1$.

For the upper bound, since $\chi(G) = 3$, we have $f(G,1) \le \frac{n}{3}$ by Corollary \ref{coro_graphs}(3). To see the sharpness, take $n/3$ vertex disjoint $K_3$ and complete it by adding edges to a maximal outerplanar graph $G$. As $G$ contains $n/3$ disjoint $K_3$'s, we must have $f(G,1) \ge n/3$ and thus $f(G,1) = n/3$ must hold. Thus the upper bound is also sharp.
\end{proof}

\begin{thm} The following assertions hold:\\[-4ex]
\begin{enumerate}
\item[\rm{(1)}] There exist positive constants $c_1(k)$ and $c_2(k)$ such that 
\[2n - c_1(k) \log(n) \le \max\{f(G,k) + f(\overline{G},k) : |V(G)| = n\} \le 2n - c_2(k) \log n.\]
\item[\rm{(2)}] $f(G,1) + f(\overline{G},1) \ge n-4$ and this bound is sharp.
\item[\rm{(3)}] $n- 16 k + 12 \le \min\{f(G,k) + f(\overline{G},k) : |V(G)| = n\} \le n - 8k+4$.
\end{enumerate}
\end{thm}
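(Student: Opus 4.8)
The plan is to reduce all three parts to estimating $M(G,k-1)+M(\overline{G},k-1)$. By Theorem~\ref{f in M} (with $r=2$), $f(G,k)=n-M(G,k-1)$, so
\[
f(G,k)+f(\overline{G},k)=2n-\bigl(M(G,k-1)+M(\overline{G},k-1)\bigr).
\]
The key tool is the following complementary-$\mad$ estimate: if $W\subseteq V$ satisfies $\mad_G(W)\le 2(k-1)$ and $\mad_{\overline{G}}(W)\le 2(k-1)$, then taking $F=W$ in the definition of $\mad$ gives $e_G(W),e_{\overline{G}}(W)\le (k-1)|W|$; since $e_G(W)+e_{\overline{G}}(W)=\binom{|W|}{2}$, we get $\binom{|W|}{2}\le 2(k-1)|W|$, i.e.\ $|W|\le 4k-3$. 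For $k=1$ this says $|W|\le 1$: a set independent both in $G$ and in $\overline{G}$ has a single vertex.

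For the lower bounds in (2) and (3), I would fix $A_1,A_2$ realizing $M(G,k-1)$ and $B_1,B_2$ realizing $M(\overline{G},k-1)$. As $A_1\cap A_2=B_1\cap B_2=\emptyset$, the set $S:=(A_1\cup A_2)\cap(B_1\cup B_2)$ is the disjoint union of the four cells $A_i\cap B_j$, and each cell lies in some $A_i$ and some $B_j$, so by Lemma~\ref{lemma3}(1) it has $\mad_G\le 2(k-1)$ and $\mad_{\overline{G}}\le 2(k-1)$; hence $|A_i\cap B_j|\le 4k-3$ and $|S|\le 16k-12$. Since $M(G,k-1)+M(\overline{G},k-1)=|(A_1\cup A_2)\cup(B_1\cup B_2)|+|S|\le n+16k-12$, we obtain $f(G,k)+f(\overline{G},k)\ge n-16k+12$, the lower bound in (3). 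At $k=1$ every cell has at most one vertex, so $|S|\le 4$ and the same computation gives $f(G,1)+f(\overline{G},1)\ge n-4$, the lower bound in (2).

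For the construction witnessing the upper bound in (3), I would partition $V=X\cup Y\cup S_1\cup S_2\cup S_3\cup S_4$ with $|S_i|=2k-1$ and define $G$ by: $X$ independent with no edges to $S_1\cup S_2$; $Y$ a clique; each $S_i$ a clique; no edges in the pairs $(S_1,S_2)$ and $(S_3,S_4)$; all edges present in $(S_1,S_3)$, $(S_2,S_4)$ and between $Y$ and $S_1\cup S_3$; the remaining pairs arbitrary. Then $A_1=X\cup S_1\cup S_2$ and $A_2=S_3\cup S_4$ are vertex-disjoint unions of cliques of order at most $2k-1$, so $\mad_G(A_i)\le 2(k-1)$ and $M(G,k-1)\ge|X|+(8k-4)$; dually $B_1=Y\cup S_1\cup S_3$ and $B_2=S_2\cup S_4$ induce edgeless subgraphs of $\overline{G}$ (each within-$B_\ell$ cross pair was made complete in $G$), so $M(\overline{G},k-1)\ge|Y|+(8k-4)$. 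Adding, $M(G,k-1)+M(\overline{G},k-1)\ge n+8k-4$ and thus $f(G,k)+f(\overline{G},k)\le n-8k+4$. Taking $k=1$ (each $S_i$ a single vertex and $X=Y=\emptyset$, so $G=2K_2$ and $\overline{G}=C_4$) shows the bound in (2) is sharp. The main obstacle here is exactly the adjacency bookkeeping: one must check that the complete cross-pairs introduced to kill the $\overline{G}$-edges inside each $B_\ell$ do not raise $\mad_G(A_i)$, which holds precisely because every such pair is split between $A_1$ and $A_2$.

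Finally, (1) rests on Ramsey phenomena for low-$\mad$ sets, using that an independent set of $G$ is a low-$\mad$ set of $G$ and a clique of $G$ is a low-$\mad$ set of $\overline{G}$. For the bound valid for all $G$, the estimate $R(s,s)\le 4^s$ forces a monochromatic clique on $s\ge\tfrac12\log_2 n$ vertices, contributing $s$ to either $M(G,k-1)$ or $M(\overline{G},k-1)$, whence $f(G,k)+f(\overline{G},k)\le 2n-c_2(k)\log n$. For the matching lower bound I would take $G=G(n,\tfrac12)$: a Chernoff bound gives $\Pr[e_G(W)\le (k-1)|W|]\le e^{-\Omega(|W|^2)}$, and a union bound over the $\binom{n}{m}$ sets of each size $m\ge C(k)\log n$ shows that, whp, every such $W$ has $e_G(W)>(k-1)|W|$ and $e_{\overline{G}}(W)>(k-1)|W|$, hence $\mad_G(W)>2(k-1)$ and $\mad_{\overline{G}}(W)>2(k-1)$. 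Then every low-$\mad$ set has fewer than $C(k)\log n$ vertices, so $M(G,k-1)+M(\overline{G},k-1)<4C(k)\log n$ and $f(G,k)+f(\overline{G},k)\ge 2n-c_1(k)\log n$; the delicate point is making the union bound uniform in $m$.
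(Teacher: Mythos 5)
Your proposal is correct, and its two load-bearing ingredients coincide with the paper's: the identity $f(G,k)+f(\overline{G},k)=2n-\bigl(M(G,k-1)+M(\overline{G},k-1)\bigr)$ from Theorem~\ref{f in M}, combined with the observation that a set $W$ with $\mad_G(W)\le 2(k-1)$ and $\mad_{\overline{G}}(W)\le 2(k-1)$ satisfies $\binom{|W|}{2}\le 2(k-1)|W|$, hence $|W|\le 4k-3$ (this cell-intersection argument is exactly the paper's proof of the lower bound in (3)), and the Ramsey-theoretic upper bound in (1). You diverge in three places, each legitimately. First, you obtain the bound $n-4$ in (2) as the $k=1$ case of the cell argument, whereas the paper argues separately: it takes $A,B$ realizing $M(G,0)$, notes they are disjoint cliques in $\overline{G}$, and uses that any tournament on $a$ vertices has at least $a-2$ vertices of positive in- and outdegree; your route is more unified. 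Second, your extremal constructions differ: the paper simply takes $G=K_a\cup K_b$, $\overline{G}=K_{a,b}$, for which $f(G,k)=n-8k+4$ by Theorem~\ref{f(n,r,1,k)} and $f(\overline{G},k)\le f(K_{a,b},1)=0$ --- considerably lighter than your six-block construction, though I verified your blocks ($A_1,A_2$ disjoint unions of cliques of order at most $2k-1$, $B_1,B_2$ complete in $G$, and no ``arbitrary'' pair falling inside any block), and the bookkeeping does close; one small suggestion: for sharpness in (2) keep $X,Y$ of arbitrary size rather than empty, so that equality is exhibited for every $n\ge 4$ and not only $n=4$. Third, for the lower bound in (1) the paper invokes Ramsey graphs with $\alpha(G)\sim\alpha(\overline{G})\sim c\log n$ and converts via Theorem~\ref{Thm3}(2) and the inequality $\beta_{2(k-1)}(G)\le (2k-1)\alpha(G)$, while you run a direct $G(n,\tfrac12)$ computation (Chernoff plus a union bound over all sets of size at least $C(k)\log n$, which is uniform in $m$ since $\binom{n}{m}e^{-\Omega(m^2)}\le e^{m(\log n-\Omega(m))}$ is summably small in that range) to bound each of $M(G,k-1)$ and $M(\overline{G},k-1)$ by $O_k(\log n)$ directly. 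Both are sound; yours is self-contained and works entirely at the level of $\mad$, while the paper's reuses its degeneracy machinery and the known existence of Ramsey graphs --- the same random object underlying both arguments.
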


\begin{proof}
(1) By Corollary \ref{coro_graphs} (1), for any graph $G$ on $n$ vertices, $n - \alpha(G) \ge f(G,1) \ge n - 2\alpha(G)$ for any $n$-vetex graph $G$. Since by the Ramsey-Theorem $\max\{\alpha(G), \alpha(\overline{G}) \} \ge c \log n$ and thus $\alpha(G) + \alpha(\overline{G}) \ge c \log n$ for a constant $c > 0$, we obtain $f(G,1) + f(\overline{G},1) \le n - \alpha(G) + n - \alpha(\overline{G}) \le 2n - c \log n$. Further, the Ramsey-Theorem guarantees also the existence of  a graph $G$ and a constant $c$ for which $\alpha(G) \sim \alpha(\overline{G}) \sim c \log n$. Hence for such graphs, $f(G,1) + f(\overline{G},1) \ge n - 2\alpha(G) + n - 2\alpha(\overline{G}) \ge 2n - 4 c \log n$. Hence we have that $2n - 4 c \log n \le \max\{f(G,1) + f(\overline{G},1) : |V(G)| = n\} \le 2n - c \log n$, which proves the theorem for $k=1$.

Since $f(G,k) \le f(G, k-1)$, we also obtain for the Ramsey-graphs given above that $f(G,k) + f(\overline{G},1) \le f(G,1) + f(\overline{G},1) \le 2n - c \log n$ and we can take $c_2(k) = c$. By Theorem \ref{Thm3}, we have that $f(G,k) \ge n- 2 \beta_{2(k-1)}(G)$, where $\beta_{2(k-1)}(G)$ is the cardinality of a maximum set of vertices of $G$ whose induced graph is $2(k-1)$-degenerate. Observe that for the above Ramsey-graphs, if $F$ is the largest induced $2(k-1)$-degenerate graph of $G$, then $c \log n \ge \alpha(G) \ge \alpha(F) \ge \frac{|F|}{2k-1} =  \frac{\beta_{2(k-1)}(G)}{2k-1}$ and hence $ \beta_{2(k-1)}(G) \le (2k-1) c \log n$. The same holds for $\overline{G}$, namely  $\beta_{2(k-1)}(\overline{G}) \le (2k-1) c \log n$. Hence the above Ramsey-graphs have $f(G,k) + f(\overline{G},k) \ge 2n - 4 (2k-1) c \log n$ and we may take $c_1(k) = 4c(2k-1)$.

(2) Recall that $f(G,1) = M(G,0)$. Let $A$ and $B$ be two disjoint independent sets realizing $M(G,0)$. Then in $\overline{G}$, $A$ and $B$ are two vertex disjoint cliques of cardinality, say, $a$ and $b$. Hence, $f(G,1) + f(\overline{G},1) \ge n - (a + b) + (a-2) + (b-2) = n-4$ (the inequality sign is because of the possible case that $a=1$ or $b=1$). To see that the bound can be attained, take $G = K_a \cup K_b$ and $\overline{G} = K_{a,b}$. Then $f(G,1) = f(K_a,1) + f(K_b) = a + b - 4 = n-4$, while $f(\overline{G},1) = f(K_{a,b},1) = 0$. Hence $f(G,1) + f(\overline{G},1) = n-4$ and the bound above is attained.

(3) Let $G$ be a graph and let $A$ and $B$ be two vertex disjoint sets realizing $M(G,0)$. Then $\mad_G(A) \le 2(k-1)$ and $\mad_G(B) \le 2 (k-1)$. Let also $X$ and $Y$ be two vertex disjoint sets realizing $M(\overline{G},0)$. Then $\mad_{\overline{G}}(X) \le 2(k-1)$ and $\mad_{\overline{G}}(Y) \le 2 (k-1)$. Now consider any set $T \subseteq V$ such that $\mad_{\overline{G}}(T) \le 2(k-1)$. Then $e(\overline{G}[T]) \le (k-1) |T|$ and hence in $G$ we have $e(G[T]) \ge {|T| \choose 2} - (k-1) |T|$. This implies that $\mad_G(T) \ge \frac{2 e(G[T])}{|T|} = (|T| - 1)-2(k-1)$ which is greater than $2(k-1)$ when $|T| > 4(k-1) + 1$. Hence, as $\mad_{\overline{G}}(A \cap X) \le \mad_{\overline{G}}(X)\le 2(k-1)$, we obtain that $|A \cap X| \le 4(k-1) +1$. The same happens with $B \cap X$, $A \cap Y$ and $B \cap Y$, and thus also $|B \cap X| \le 4(k-1) +1$, $|A \cap Y| \le 4(k-1) +1$ and $|B \cap Y| \le 4(k-1) +1$. Now we obtain, as $A$ and $B$ are vertex disjoint and $X$ and $Y$ as well,
\begin{eqnarray*}
f(G,k) + f(\overline{G},k) &=& n - |A \cup B| + n - |X \cup Y|\\
                                    &=&  = 2n - (|A \cup B \cup X \cup Y| + |A\cap X| + |A \cap Y| + |B \cap X| + |B \cap Y|)\\
                                    &\ge& 2n - (n + 4(4(k-1)+1))  = n - 16(k-1) - 4 = n - 16 k + 12.
\end{eqnarray*}
Let now $G = K_a \cup K_b$, where $a, b \ge 2$ and $\overline{G} = K_{a,b}$. Then by Theorem \ref{f(n,r,1,k)} $f(G,k) = f(K_a,k)+f(K_b,k) = a-4k+2 + b-4k+2 = a+b - 8k + 4 = n - 8k + 4$, while $0 \le f(\overline{G},k) \le f(\overline{G}, 1) = f(K_{a,b}, 1) = 0$. Hence for this graph we obtain $f(G,k) + f(\overline{G},k) = n-8k +4$. Altogether we obtain that $n-16k+12 \le \min \{f(G,k) + f(\overline{G},k)\} \le n - 8k+4$.
\end{proof}

\section{Ramsey type theorem for $f(H,p,k)$}

\subsection{Bounds using the notion of $\bp(H,p)$}

\begin{thm}\label{b(H,p)}
Let $H$ be an $r$-uniform hypergraph and $p$ an integer with $1 \le p \le r-1$. Then $f(H,p,1) \ge {n \choose p} - \bp(H,p)$ and equality holds for $p =1$ and $p = r-1$.
\end{thm}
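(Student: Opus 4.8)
The plan is to prove the inequality for all $p$ by converting an arbitrary orientation into a colouring of the ``bad'' $p$-sets, and then to establish the matching upper bound on $f(H,p,1)$ (hence equality) for $p = 1$ and $p = r-1$ by reversing this process through Hall's theorem. To set up the dictionary, call a $p$-set $A$ \emph{bad} under an orientation $D$ if some coordinate of its degree vector vanishes, i.e.\ $d_i(A) = 0$ for some $i \in \{1,\dots,\binom{r}{p}\}$; the $p$-sets counted by $f(D(H),p,1)$ are exactly the non-bad ones, so $f(D(H),p,1) = \binom{n}{p} - (\text{number of bad } p\text{-sets})$.

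For the inequality it therefore suffices to show that, for every orientation $D$, the family $B$ of bad $p$-sets admits a colouring with the $\binom{r}{p}$ position sets $P_1,\dots,P_{\binom{r}{p}}$ as colours in which no edge with all its $p$-subsets in $B$ is $p$-monochromatic; this forces $|B| \le \bp(H,p)$ and hence $f(H,p,1) \ge \binom{n}{p} - \bp(H,p)$. I would colour each bad $A$ by an index $c(A)$ with $d_{c(A)}(A) = 0$. If some edge $E$ with all $p$-subsets in $B$ were monochromatic of colour $i$, then the ordering that $D$ assigns to $E$ places exactly one $p$-subset $A^{\ast} \subseteq E$ into the position set $P_i$, whence $d_i(A^{\ast}) \ge 1$; but $A^{\ast} \in B$ is coloured $i$, so $d_i(A^{\ast}) = 0$, a contradiction.

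For equality when $p \in \{1, r-1\}$ I would run this construction in reverse. Take a largest family $B = B(H,p)$ realising $\bp(H,p)$ together with a witnessing colouring $c$ into the $\binom{r}{p}$ position sets, and build an orientation making every $A \in B$ bad, giving $f(H,p,1) \le \binom{n}{p} - \bp(H,p)$ and closing the gap. The point is that for $p=1$ a position set $P_i$ is a single position, while for $p=r-1$ it is the complement of a single position, so in both cases the constraint ``$A$ does not occupy $P_{c(A)}$'' becomes, for each edge $E$, a list of \emph{forbidden positions}: each vertex $v \in E$ (playing the role of $A = \{v\}$, resp.\ $A = E \setminus \{v\}$) is barred from at most one position. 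I would orient each edge independently by selecting, on the bipartite graph between the $r$ vertices of $E$ and the $r$ positions with these single exclusions removed, a perfect matching. Each vertex then has at least $r-1$ admissible positions, so Hall's condition can only fail on the full vertex set, and that happens exactly when all $r$ vertices are barred from one common position $i$ --- i.e.\ when every $p$-subset of $E$ lies in $B$ and is coloured $i$, which is precisely the $p$-monochromatic configuration excluded by $c$. Hence a matching, and thus the desired ordering of $E$, always exists; since the per-edge constraint attached to $A$ is exactly ``the complementary part of $A$ avoids the position singled out by $c(A)$'', carrying this out over all edges yields $d_{c(A)}(A) = 0$ for every $A \in B$ simultaneously.

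The main obstacle I expect is the $p = r-1$ bookkeeping: one must make the correspondence $A \leftrightarrow E \setminus A$ between $(r-1)$-subsets and complementary vertices, and the correspondence between $P_i$ and its missing position, completely explicit, so that ``$A$ occupies $P_{c(A)}$ in $E$'' translates cleanly into ``the complement of $A$ sits at a prescribed position'', and then verify that the global condition $d_{c(A)}(A)=0$ really follows from the per-edge matchings --- it does, because distinct edges are oriented independently and each contributes the same forbidden-position constraint for $A$. It is worth recording why the argument stops at these two values: for $2 \le p \le r-2$ a single ordering of $E$ couples the position sets of many $p$-subsets at once, so the reduction to an edge-by-edge bipartite matching breaks down and one should not expect equality in general.
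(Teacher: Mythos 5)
Your proof is correct. The first half (the inequality) is exactly the paper's argument: colour each $p$-set having a vanishing coordinate by the index of such a coordinate, and note that a $p$-monochromatic edge of colour $i$ would force $d_i(A^{\ast})\ge 1$ for the $p$-subset $A^{\ast}$ that its ordering places on $P_i$, contradicting $d_i(A^{\ast})=0$. Where you genuinely diverge is the equality construction for $p\in\{1,r-1\}$. The paper does not use Hall's theorem there; instead it averages over all $r!$ orderings of an edge $E$: the expected number of forbidden placements is at most $1$, with equality only when every $p$-subset of $E$ lies in $B(H,p)$, and in that case the non-monochromaticity of $E$ is used to exhibit one ordering with \emph{two} forbidden placements (for $p=r-1$ this requires the compatibility $|P_i\cap P_j|=|X\cap Y|=r-2$, realized by an explicit placement of $X\cap Y$, $X\setminus Y$ and $Y\setminus X$), whence some ordering has none. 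Your bipartite-matching argument --- each vertex of $E$ barred from at most one position, Hall's condition able to fail only on the full vertex set, and only in the excluded monochromatic configuration --- reaches the same per-edge conclusion more constructively; it is in fact the same device the paper itself uses in the proof of Theorem \ref{f in M}, so it fits the paper's toolbox naturally, and your bookkeeping for $p=r-1$ (identifying ``$A$ occupies $P_i$'' with ``the complementary vertex $E\setminus A$ occupies the position missing from $P_i$'') is exactly what is needed. Both arguments isolate the same reason the method stops at $p=1$ and $p=r-1$: only there does the constraint attached to a bad $p$-set reduce to a single vertex avoiding a single position (your phrasing), equivalently only there can two prescribed forbidden placements always be realized by one ordering (the paper's phrasing).
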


\begin{proof}
Let $D$ be an orientation of the edges of $H$ that realizes $f(H,p,1)$. Let $A_i$ be the set of $p$-sets $A\subseteq V$ in which the $i$-th coordinate of its degree vector is the first coordinate equal to $0$, $1 \le i \le {r \choose p}$. Clearly, some or even all of the $A_i$ can be empty and they are pairwise disjoint as no $p$-set can be in both $A_i$ and $A_j$ by definition. As for every edge $E$, there is a $p$-subset of $E$ in each of the possible positions $i$, $1 \le i \le {r \choose p}$, not all $p$-sets contained in an edge can be contained in some $A_i$ and thus $\bigcup A_i$, $1 \le i \le {r \choose p}$, is a family of $p$-sets colored by $r \choose p$ colors such that no edge is $p$-monochromatic. Hence $b(H,p) \ge {n \choose p} - f(H,p,1)$.

We will prove now that equality holds when $p=1$ or $p=r-1$. Let $B(H, p) = \bigcup A_i$, $1\le i \le {r \choose p}$, be a largest family of $p$-sets using at most ${r \choose p}$ colors such that no edge $E$ of $H$ with all its $p$-sets in $B(H,p)$ is $p$-monochromatic, where $A_i$ are the color classes. Now we have to show that every edge $E \in E(H)$ can be oriented in such a way that if $E$ contains a $p$-set $A \in A_i$, then the vertices of $A$ will not be placed on the set of positions corresponding to the $i$-th coordinate of the degree vector. Thus we will obtain an orientation of the edges of $H$ with at least $\bp(H,p)$ $p$-sets with at least one zero-coordinate in its degree vector, showing that $f(H,p,1) \le {n \choose p} - b(H,p)$. Such an order is possible in case that $p=1$ or $p=r-1$.

(a) Let $p = 1$. Suppose that $E = \{v_1,v_2, \ldots, v_r\}$ and consider all the permutations of the vertices of $E$. The probability that a vertex $v$ is placed on a forbidden position is $1/r$ if $\{v\} \in \bigcup A_i$ and $0$ otherwise. Hence the expected number of vertices of $E$ placed in a forbidden position is $$\sum_{v \in E \cap (\bigcup A_i)} \frac{1}{r} \;\le \;1,$$
and equality holds when $E \subseteq \bigcup A_i$. So if $E$ is not contained in $\bigcup A_i$, the expected number of vertices placed in a forbidden position is less than $1$ and thus the required order exists, namely there is an order of $E$ such that no vertex ($p=1$) is placed in a forbidden position. Otherwise, if $E \subseteq \bigcup A_i$, as the $A_i$'s are independent (and no edge contained in $B(H,p)$ is $p$-monochromatic), there has to be a placement of $u \in A_i$ and $v \in A_j$, $i \neq j$ with $u , v \in E$ such that they are placed in the $i$-th and $j$-th positions and the rest of $E$ is placed arbitrarily among the color classes. This is an ordering of $E$ in which at least two vertices  are placed in forbidden positions. But, as the expected number of vertices in forbidden positions is equal to $1$, this implies that there has to be also an ordering of $E$ with no vertex placed into a forbidden position. Hence we have proved that there is an orientation of the edges such that all vertices in $A_i$ have a zero on its $i$-th coordinate of the degree vector. Thus there are at least $\bp(H,1)$ vertices whose degree vector contains a zero-coordinate, implying that $f(H,1,1) \le n - \bp(H,1)$, and with the inequality proved above, we obtain $f(H,1,1) = n - \bp(H,1)$.

(b) Let $p=r-1$. Suppose that $E = \{v_1,v_2, \ldots, v_r\}$ and consider all the permutations of the vertices of $E$. The probability that an $(r-1)$-set $A \subseteq E$ is placed on a forbidden position is ${r \choose r-1}^{-1} = 1/r$ if $A \in \bigcup A_i$ and $0$ otherwise. Hence the expected number of $(r-1)$-sets of $E$ placed in a forbidden position is 
$$\sum_{A \subseteq E \cap (\bigcup A_i), |A|=r-1} \frac{1}{r} \;\le \;1,$$
and equality holds precisely when all $(r-1)$-subsets of $E$ are contained in $\bigcup A_i$. So if not all $(r-1)$-subsets of $E$ are contained in $\bigcup A_i$, then the expected number of $(r-1)$-subsets of $E$ placed in a forbidden position is less than $1$ and hence the required order exists. Otherwise, i.e. if all $(r-1)$-subsets of $E$ are contained in $B(H,r-1) = \bigcup A_i$, as $E$ is not $(r-1)$-monochromatic, there are at least two $(r-1)$-subsets $X, Y \subseteq E$ in distinct color classes. Say that $X \in A_i$ and $Y \in A_j$. We want to show the existence of an orientation of $E$ such that $X$ and $Y$ are placed into the sets of positions corresponding, respectively, to the $i$-th and $j$-th coordinate of the degree vector. To do so for general $p$-sets, we need that the $p$-sets of positions $P_i$ and $P_j$  corresponding, respectively, to the $i$-th and $j$-th coordinates of the degree vector, are such that $|P_i \cap P_j| = |X \cap Y|$. This will allow us to orient $E$ such that $X$ has position set $P_i$ and $Y$ has position set $P_j$. For $p=r-1$, we have $|P_i \cap P_j| = r-2$ and $|X \cap Y| = r-2$ and such an orientation exists just by putting the vertices of $X \cap Y$ into positions $P_i \cap P_j$ and the vertex $v \in X \setminus Y$ into the position given by $P_i \setminus P_j$, while the vertex $u \in Y \setminus X$ into position $P_j \setminus P_i$. In this manner, we have produced an ordering of $E$ in which two $(r-1)$-sets are placed on forbidden positions. But, as the expected number of  $(r-1)$-sets of $E$ in forbidden positions is equal to $1$, this implies that there has to be also an ordering of $E$ with no $(r-1)$-set placed in a forbidden position. Hence we have proved that there is an orientation of the edges such that all  $(r-1)$-sets in $A_i$ have a zero on its $i$-th coordinate of the degree vector. Thus $f(H,r-1,1) \le n - \bp(H,r-1)$, and with the inequality proved above, we obtain $f(H,r-1,1) = n - \bp(H,r-1)$.
\end{proof}

\begin{cor} The following assertions hold:\\[-4ex]
\begin{enumerate}
\item[\rm{(1)}] $f(r,r-1,1) = R(H(r,r-1),r)$, the Ramsey number of $H(r,r-1)$ using $r$ colors.
\item[\rm{(2)}] $f(3,2,1) = 17$, $f(4,3,1) \le 15202$. 
\end{enumerate}
\end{cor}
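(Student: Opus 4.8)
The plan is to derive both parts directly from Theorem \ref{b(H,p)}, specialized to $H = H(n,r)$ and $p = r-1$, together with the dictionary relating the quantity $\bp$, the Ramsey $p$-chromatic number $\chi_R$, and ordinary Ramsey numbers.

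For part (1), I would invoke the equality case of Theorem \ref{b(H,p)}, which holds precisely because $p = r-1$: for every $n$,
\[
f(n,r,r-1,1) = \binom{n}{r-1} - \bp(H(n,r),r-1).
\]
By the definition $f(r,r-1,1) = \min\{n : f(n,r,r-1,1) > 0\}$, it then suffices to find the smallest $n$ for which $\bp(H(n,r),r-1) < \binom{n}{r-1}$, i.e. for which not all $(r-1)$-sets can belong to an admissible family $B(H,r-1)$.

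Next I would translate this strict inequality into a Ramsey condition. By definition, $\bp(H(n,r),r-1) = \binom{n}{r-1}$ exactly when the entire collection of $(r-1)$-sets of $\{1,\ldots,n\}$ can be colored with at most $\binom{r}{r-1} = r$ colors with no $(r-1)$-monochromatic edge, which is precisely the statement $\chi_R(n,r,r-1) \le r$. Unwinding the definition of a $(r-1)$-monochromatic edge, such a proper $r$-coloring is exactly an $r$-coloring of the $(r-1)$-sets with no monochromatic copy of $H(r,r-1)$; and such a coloring exists if and only if $n < R(H(r,r-1),r)$. Hence $\bp(H(n,r),r-1) < \binom{n}{r-1}$ if and only if $n \ge R(H(r,r-1),r)$, so the minimum such $n$ is $R(H(r,r-1),r)$, establishing part (1). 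Part (2) then follows by substituting known Ramsey values: for $r=3$ we have $H(3,2) = K_3$ and $R(H(3,2),3) = R(K_3,K_3,K_3) = 17$, giving $f(3,2,1) = 17$; for $r=4$, $H(4,3)$ is the complete $3$-uniform hypergraph on $4$ vertices, and the bound $R(H(4,3),4) \le 15202$ from \cite{Ra} yields $f(4,3,1) \le 15202$.

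The only genuinely delicate point is the equivalence in the middle step: one must verify carefully that ``$\bp$ attains its maximum $\binom{n}{r-1}$'' is the exact negation of the Ramsey arrow, boundary included, so that the threshold value $R(H(r,r-1),r)$ itself—rather than one more or one less—is the correct answer. Everything else is definition-chasing on top of Theorem \ref{b(H,p)}.
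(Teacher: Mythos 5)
Your proof is correct and follows essentially the same route as the paper's: both apply the equality case of Theorem \ref{b(H,p)} with $p=r-1$ to get $f(n,r,r-1,1)=\binom{n}{r-1}-\bp(H(n,r),r-1)$, then unwind the definition of $\bp$ to identify the threshold $\min\{n : f(n,r,r-1,1)>0\}$ with the $r$-color Ramsey number $R(H(r,r-1),r)$, and finally substitute the known values from \cite{Ra}. Your write-up is in fact somewhat more careful than the paper's about the boundary of the equivalence (and it correctly writes $R(H(4,3),4)$ where the paper has the typo $R(H(4,3),1)$), but the underlying argument is identical.
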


\begin{proof}
(1) By Theorem \ref{b(H,p)}, $f(n,r,r-1,1) = {n \choose r-1} - \bp(H(n,r),r-1)$. Since $\bp(H(n,r))$ is the cardinality of a largest family of $(r-1)$-sets that can be colored with at most $r$ colors such that no edge of $H(n,r)$ is $(r-1)$-monochromatic, it follows that $f(r,r-1,1)$ is the minimum $n$ such that any coloring of the $(r-1$-sets of $V(H(n,r))$ has a $(r-1)$-monochromatic edge. Hence $f(r,r-1,1) = R(H(r,r-1),r)$.\\
(2) By (1), we obtain $f(3,2,1) = R(H(3,2),3) = R(K_3,K_3,K_3) = 17$ and $f(4,3,1) = R(H(4,3),1) \le 15202$ (see \cite{Ra}).
\end{proof}

\subsection{Bounds using Ramsey numbers}

\begin{thm}\label{ramsey}
There exists a constant $0 < c(r,p,k) < 1$ and a positive integer $N(r,p,k)$ such that if $H$ is an $r$-uniform hypergraph on $n \ge N(r,p,k)$ vertices and with $e(H) \ge c(r,p,k) {n \choose r}$ edges, then $f(H,p,k) > 0$.
\end{thm}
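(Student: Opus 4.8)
The plan is to reduce the dense‑hypergraph statement to the complete‑hypergraph case and then feed in the Ramsey‑type finiteness of $f(r,p,k)$. Assume throughout $1 \le p \le r-1$ (for $p=r$ the statement is degenerate). Set $t := f(r,p,k)$; the whole argument rests on the fact that this quantity is \emph{finite}, which is exactly the Ramsey input recorded in the introduction (the $t$-set statement applied with $t=p$ gives $f(r,p,k)\le N(r,p,p,k)<\infty$) and which I sketch below. Granting $t<\infty$, I would take $N(r,p,k)=t$ and let $c(r,p,k)$ be any constant in the interval $\left(1-\binom{t}{r}^{-1},\,1\right)$. The engine is then: a hypergraph this dense must contain a complete sub‑hypergraph $H(t,r)$, and restricting \emph{any} orientation of $H$ to that copy produces a good $p$-set, which remains good in $H$.

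First I would run the density step, which is the easy part. If $e(H)\ge c\binom{n}{r}$ with $c>1-\binom{t}{r}^{-1}$, then $H$ has at most $(1-c)\binom{n}{r}$ non‑edges, each contained in $\binom{n-r}{t-r}$ of the $t$-subsets of $V(H)$. Hence the number of $t$-subsets meeting at least one non‑edge is at most $(1-c)\binom{n}{r}\binom{n-r}{t-r}=(1-c)\binom{t}{r}\binom{n}{t}<\binom{n}{t}$, where I used the identity $\binom{n}{r}\binom{n-r}{t-r}=\binom{t}{r}\binom{n}{t}$ together with $(1-c)\binom{t}{r}<1$. Therefore some $t$-set $W$ induces only edges, i.e. $H[W]=H(t,r)$. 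For the transfer, fix an arbitrary orientation $D$ of $H$ and consider the orientation it induces on $H[W]$. By the definition of $t=f(r,p,k)$, this induced orientation has a $p$-set $A\subseteq W$ whose degree vector, computed inside $W$, has all $\binom{r}{p}$ coordinates at least $k$. Since every $r$-edge of $H[W]$ containing $A$ is in particular an edge of $H$ containing $A$, each coordinate $d_i(A)$ computed in $H$ is at least the one computed inside $W$; hence $A$ is good in $H$. As $D$ was arbitrary, $f(H,p,k)\ge 1>0$.

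The main obstacle is the Ramsey input, namely the finiteness of $f(r,p,k)$ for $p\ge 2$, rather than the density reduction. If it is not taken as already available, I would prove it by reading an orientation of $H(n,r)$ as an $r!$-colouring of the $r$-sets of $[n]$ (each edge receiving the permutation describing its order relative to the natural order of $[n]$) and applying the hypergraph Ramsey theorem to extract a set $S$ all of whose $r$-subsets are oriented by one fixed permutation pattern. Inside such a homogeneous $S$ I would pick a $p$-set $A$ whose elements are widely separated, and then realise $A$ in a prescribed position set $P_i$ by choosing the complementary $(r-p)$ vertices of the edge to fall into the gaps around the elements of $A$ dictated by $P_i$; because the pattern is fixed, each target $P_i$ is reachable, and once $|S|$ is large enough as a function of $r,p,k$ there are at least $k$ admissible completions for every $P_i$ simultaneously, making $A$ good. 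This gives $f(r,p,k)\le R^{(r)}(m;r!)$ for a suitable $m=m(r,p,k)$, which is the finiteness needed above.
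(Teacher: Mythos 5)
Your proof is correct. It shares the paper's skeleton --- embed a complete subhypergraph into the dense $H$, then apply a Ramsey-type result for complete hypergraphs and transfer via monotonicity of degree-vector coordinates under induced orientations --- but both steps are carried out differently. For the embedding, the paper merely cites ``Turan's theorem for hypergraphs''; the precise fact needed (for fixed $m$ there is $c<1$ such that density $c$ forces a copy of $H(m,r)$) is exactly what your non-edge count establishes through the identity $\binom{n}{r}\binom{n-r}{t-r}=\binom{t}{r}\binom{n}{t}$, so your version is self-contained and yields explicit parameters $N(r,p,k)=t=f(r,p,k)$ and any $c>1-\binom{t}{r}^{-1}$, where the paper leaves the constant implicit. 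For the Ramsey input, the paper re-runs inside the complete copy the coloring argument of Theorem \ref{t-set}: each $p$-set is colored by the index of the first degree-vector coordinate that is at most $k-1$ (one extra color for ``all coordinates at least $k$''), hypergraph Ramsey produces a homogeneous $q$-set, and the double count $\binom{q}{r}=e(H[Q])\le\sum_{S}\deg_\ell(S)\le(k-1)\binom{q}{p}$ rules it out. Your invocation of $t=f(r,p,k)<\infty$ via Theorem \ref{t-set} applied with $t=p$ is equivalent and non-circular, since that theorem is proved independently of Theorem \ref{ramsey}; your backup proof of finiteness --- an $r!$-coloring of $r$-sets by orientation pattern, followed by an explicit count of admissible completions in the gaps around a well-separated $p$-set inside a pattern-homogeneous set --- is a genuinely different Ramsey argument, constructive where the paper's is a counting contradiction. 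Net effect: your route is more quantitative and self-contained; the paper's route establishes the stronger structural conclusion (an entire $t$-set all of whose $p$-subsets are good) along the way.
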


We will prove the following theorem from which Theorem \ref{ramsey} is a corollary from Turan's theorem for hyergraphs.

\begin{thm}\label{t-set}
Let $H = H(n,r)$ be the complete $r$-uniform hypergraph on $n$ vertices and let $r > p \ge 1$, $t \ge p, k \ge 1$. Then there is an integer $N(r, p ,t, k) > 0$ such that if $n \ge N(r, p, t, k)$ then every orientation $D(H)$ has a $t$-set $B \subseteq V(H)$ all of its $p$-subsets $S \subseteq B$ having $\deg_i(S) \ge k$ for all $1 \le i \le {r \choose p}$. 
\end{thm}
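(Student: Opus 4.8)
The plan is to reduce the statement to the classical Ramsey theorem for $r$-uniform hypergraphs by encoding the orientation as a coloring of the $r$-sets. Fix the natural linear order on the vertex set $\{1,2,\dots,n\}$. For each edge $E=\{w_1<w_2<\dots<w_r\}$, the orientation $D(H)$ places its vertices in some order, which we record as the permutation $\pi_E\in S_r$ where $\pi_E(a)=b$ means that the $a$-th smallest vertex of $E$ sits in position $b$. Coloring each $r$-set $E$ by $\pi_E$ gives a coloring with $r!$ colors, so by Ramsey's theorem for $r$-uniform hypergraphs there is a function $R$ such that, whenever $n\ge R(L;r!)$, some set $U$ with $|U|=L$ is monochromatic: there is a single permutation $\pi\in S_r$ with $\pi_E=\pi$ for \emph{every} $r$-subset $E\subseteq U$. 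I would take $N(r,p,t,k)=R(L;r!)$ for a suitable $L=L(r,p,t,k)$ chosen below.

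On the homogeneous set $U$, carrying the order inherited from $\{1,\dots,n\}$, the orientation becomes transparent: if a $p$-set $S\subseteq U$ occupies the ranks $Q\subseteq\{1,\dots,r\}$ inside an edge $E\subseteq U$, then it occupies the set of positions $\pi(Q)$. Since $\pi$ is a bijection, as $Q$ ranges over all $\binom{r}{p}$ $p$-subsets of $\{1,\dots,r\}$ the position set $\pi(Q)$ ranges over all $\binom{r}{p}$ patterns $P_1,\dots,P_{\binom{r}{p}}$, each exactly once. Hence, to force $\deg_i(S)\ge k$ for all $i$, it suffices to exhibit, for every rank pattern $Q$, at least $k$ edges $E\subseteq U$ with $S\subseteq E$ in which the elements of $S$ occupy precisely the ranks $Q$; these edges already belong to $H$, so they lower bound the degrees computed over all of $H$ and no edge leaving $U$ is needed.

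Realizing a prescribed rank pattern $Q$ for a fixed $S=\{s_1<\dots<s_p\}$ amounts to choosing the remaining $r-p$ vertices of $E$ from the $p+1$ gap regions of $U$ determined by $S$ (below $s_1$, strictly between consecutive $s_j$, and above $s_p$), placing in each region the number dictated by the composition $(q_0,\dots,q_p)$ of $r-p$ associated with $Q$. Thus I would select $B\subseteq U$ with $|B|=t$ that is \emph{spread out}, meaning that below the least element of $B$, above the greatest, and between any two consecutive elements of $B$ there are at least $M$ elements of $U$; this is possible as soon as $L\ge t+(t+1)M$. For such a $B$, every $p$-subset $S$ has each of its gap regions containing at least $M$ vertices of $U$, so for each rank pattern the number of admissible edges equals $\prod_{j=0}^{p}\binom{N_j}{q_j}$ with each $N_j\ge M$; this grows polynomially in $M$ of degree $r-p\ge1$, and hence exceeds $k$ once $M=M(r,p,k)$ is large enough. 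Taking $L=t+(t+1)M$ then finishes the argument: every $p$-subset $S$ of $B$ has $\deg_i(S)\ge k$ for all $1\le i\le\binom{r}{p}$.

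The technical heart, and the step I expect to be most delicate, is the gap-region count of the last paragraph: one must check that a single spread-out $t$-set simultaneously works for all of its $\binom{t}{p}$ $p$-subsets and all $\binom{r}{p}$ patterns, and that distributing the extra $r-p$ vertices inside $U$ genuinely produces distinct edges realizing each rank pattern. The Ramsey reduction itself is routine once the order-type coloring is set up; the only care required there is to verify that monochromaticity yields one global permutation $\pi$ governing all $r$-subsets of $U$ relative to the inherited order.
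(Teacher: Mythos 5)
Your proposal is correct, but it takes a genuinely different route from the paper's proof. The paper never looks at order types: it colors the \emph{$p$-sets} of $V(H)$ with $\binom{r}{p}+1$ colors according to the first coordinate of the degree vector that is at most $k-1$ (with a last color for ``all coordinates $\ge k$''), applies the $p$-uniform Ramsey theorem with asymmetric targets $R(q,\dots,q,t)$, and kills the bad outcome by a double count: a $q$-set $Q$ monochromatic in color $\ell$ would force $\binom{q}{r}=e(H[Q])\le\sum_{S\subseteq Q,|S|=p}\deg_\ell(S)\le(k-1)\binom{q}{p}$, contradicting the choice of $q$ as the least integer with $(k-1)\binom{q}{p}<\binom{q}{r}$. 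You instead color the \emph{$r$-sets} by the permutation $\pi_E\in S_r$ recording the orientation against the ambient linear order, apply the $r$-uniform Ramsey theorem with $r!$ colors, and then work constructively inside the homogeneous set $U$: the bijectivity of $\pi$ turns position patterns into rank patterns, and your spread-out $t$-set with gaps of size $M$ gives, for each $p$-subset $S$ and each rank pattern, at least $\prod_j\binom{N_j}{q_j}\ge k$ explicit witnessing edges (your ``delicate'' step is indeed fine: every gap region of $S$ contains a full gap of $B$ or an outer region of $B$, hence at least $M$ elements of $U$, and distinct choices of the complementary $r-p$ vertices give distinct edges). The trade-off is quantitative versus structural: the paper's reduction lives at uniformity $p<r$, so its $N(r,p,t,k)$ is governed by $p$-uniform Ramsey numbers (much smaller tower height; for $p=1$ it is essentially pigeonhole), whereas your $N=R(L;r!)$ is an $r$-uniform Ramsey number with $r!$ colors and is enormously larger. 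In exchange, your argument gives more structure than is asked for --- a single permutation governing all edges of $U$, explicit witnesses for every coordinate, and no need to reason about degree vectors until the final count --- and the order-type trick generalizes readily to other statements about orientations of complete hypergraphs.
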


\begin{proof}
Define a coloring of the $p$-sets of $\{1,2, \ldots,r\}$ with colors $1,2, \dots, {r \choose p}+1$ the following way. If $S \subseteq \{1,2, \ldots,r\}$ is a $p$-set such that the $i$-th coordinate of the degree vector is the first to be at most $k-1$, then we assign $S$ the color $i$. If all of its coordinates are at least $k$, then $S$ gets color ${r \choose p} + 1$ assigned. Let $q$ be the smallest integer such that $(k-1) {q \choose p} < {q \choose r}$. By the Ramsey Theorem for hypergraphs, if $n$ is sufficiently large, i.e. $n \ge N(r,p,q,t,k) \ge R(q,q, \ldots, q, t)$, $q$ appearing ${r \choose p}$ times, then there is a $q$-set $Q$ with all its $p$-sets of the same color, for some color from $\{1,2, \ldots, {r \choose p}\}$, or there is a $t$-set $B$ all whose $p$-sets are colored with color ${r \choose p} + 1$. If the latter case occurs, we are done. So suppose this is not the case and there is a $q$-set $Q$ with all its $p$-sets of the same color, say $\ell$, which means that, for every $p$-set $S$ of $Q$, $\deg_{\ell}(S)$ is the first coordinate of the degree vector being less than $k$. By double-counting the edges of the subhypergraph induced by $Q$, we obtain
\[
{q \choose r} = e(H[Q]) \le \sum_{S \subseteq Q, |S|=p} \deg_{\ell}(S) \le (k-1) {q \choose p},
\]
which is a contradiction to the choice of $q$. Hence there has to exist a $t$-set $B$ all whose $p$-sets are colored with color ${r \choose p} + 1$, i.e. such that all their coordinates on the degree vectors are at least $k$.
\end{proof}

Now we can give the proof of Theorem \ref{ramsey}.

\noindent
{\it Proof of Theorem \ref{ramsey}.} 
Let $q$ be the smallest integer such that $(k-1) {q \choose p} < {q \choose r}$ and let $H$ be an $r$-uniform hypergraph on $n \ge N(r,p,k)$ vertices, where $N(r,p,k) > R(q,q, \ldots, q)$, $q$ appearing ${n \choose p}$ times. Let $n' = R(q,q, \ldots, q)$, $q$ appearing ${n \choose p}$ times. By the Turan-Theorem for hypergraphs, if $e(H)$ is suficently large, say $e(H) > c(r,p,k) {n \choose r}$ for a constant $0 < c(r,p,k) < 1$, then it contains a $H(n',r)$ as a subhypergraph. Let $D$ be an orientation of $H$ realizing $f(H,p,k)$. Suppose that in the induced orientation on $H(n',r)$, all $p$-sets of $\{1,2, \ldots, r\}$ have at least one coordinate of the degree vector less than $k$. Then, as in the proof of Theorem \ref{t-set}, we can color the $p$-sets of $H(n',r)$ with ${n' \choose p}$ colors according to the first coordinate which is less than $k$. But now the Ramsey-Theorem for hypergraphs implies that there is a $q$-set $Q$ with all of its $p$-sets colored with the same color, which by the choice of $q$ is not possible. Hence there has to be a $p$-set in the subhypergraph $H(n',r)$ with all its coordinates at least $k$. This implies that $f(H,p,k) \ge f(H(n',r),p,k) > 0$. $\Box$\\

In the next theorem we will show that, in every orientation of $H(n,r)$, a positive fraction of all the $p$-sets must have all degree-vector coordinates at least $k$, as $n$ increases. For this purpose, we need to define the concept of packing. An \emph{$(n,m,p)$-packing} $\mathcal{F}$ is a family of $m$-sets of $\{1,2, \ldots, n\}$ having the property that every of its $p$-sets is contained in at most one $F \in \mathcal{F}$. In \cite{Ro}, R\"odl proved that there exists a family $\mathcal{F}$ with
\[|\mathcal{F}| \ge {n \choose p}{m \choose p}^{-1} (1 - o(1)),\]
where $p$ and $m$ are fixed and $n$ tends to infinity.

\begin{thm}
For positive integers $n$, $r$, $p \le r$ and $k$,
\[f(n,r,p,k) \ge {n \choose p}{f(r,p,k) \choose p}^{-1} (1 - o(1)),\]
where $r$, $p$ and $k$ are fixed and the	$o(1)$-term tends to zero as $n$ tends to infinity.
\end{thm}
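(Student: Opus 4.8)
The plan is to combine the definition of $f(r,p,k)$ with R\"odl's packing result through a short injectivity argument. Write $m = f(r,p,k)$; by Theorem \ref{t-set} (applied with $t=p$, which forces $f(n,r,p,k)>0$ for all large $n$) this quantity is finite, and since $p<r$ we have $m \ge r > p$, so ${m \choose p}$ is a positive integer. Fix an arbitrary orientation $D$ of $H=H(n,r)$; it suffices to bound $f(D,p,k)$ from below by the claimed quantity, since $f(n,r,p,k)$ is by definition the minimum of $f(D,p,k)$ over all orientations $D$. First I would invoke R\"odl's theorem to fix an $(n,m,p)$-packing $\mathcal{F}$ of $m$-subsets of $V(H)$ with $|\mathcal{F}| \ge {n \choose p}{m \choose p}^{-1}(1-o(1))$; here $m$ depends only on $r,p,k$ and is therefore fixed, so R\"odl's hypotheses are met and the $o(1)$ is taken as $n \to \infty$.

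Next, for each block $F \in \mathcal{F}$ I would exploit that $H[F]$ is a copy of $H(m,r)$ and that the restriction of $D$ to $H[F]$ is an orientation of it. Since $m = f(r,p,k) = \min\{n : f(n,r,p,k)>0\}$, every orientation of $H(m,r)$ contains at least one $p$-set all of whose degree-vector coordinates are at least $k$; let $S_F \subseteq F$ be such a $p$-set for the induced orientation. The crucial monotonicity observation is that degree-vector coordinates can only grow when we pass from $H[F]$ to $H$: each edge of $H[F]$ is an edge of $H$ carrying the same induced order, so the $i$-th coordinate of $S_F$ computed in $D$ is at least its value computed inside $H[F]$, for every $1 \le i \le {r \choose p}$. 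Hence each $S_F$ already has all coordinates $\ge k$ in the full orientation $D$, i.e. it is counted by $f(D,p,k)$.

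Finally I would use the defining property of a packing to see that the assignment $F \mapsto S_F$ is injective: every $p$-set of $V(H)$ lies in at most one member of $\mathcal{F}$, so if $S_F = S_{F'}$ then this common $p$-set is contained in both $F$ and $F'$, forcing $F=F'$. Consequently $D$ has at least $|\mathcal{F}|$ distinct good $p$-sets, giving $f(D,p,k) \ge |\mathcal{F}| \ge {n \choose p}{f(r,p,k) \choose p}^{-1}(1-o(1))$, and taking the minimum over all $D$ yields the theorem. The argument is short; its only genuinely delicate points are the degree monotonicity under passage to an induced subhypergraph and the packing-to-injectivity step. I expect the main (modest) obstacle to be pinning down that the block size must be \emph{exactly} $m=f(r,p,k)$: a smaller block size would not guarantee a good $p$-set in every induced orientation, while a larger one would weaken the denominator ${m \choose p}$ and hence the bound.
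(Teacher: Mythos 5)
Your proposal is correct and follows essentially the same route as the paper: fix a maximum $(n,m,p)$-packing with $m=f(r,p,k)$ via R\"odl's theorem, extract from each block a $p$-set that is good for the induced orientation, and use the packing property to conclude these $p$-sets are distinct. Your write-up is in fact slightly more careful than the paper's, since you make explicit both the monotonicity of degree-vector coordinates under passage from $H[F]$ to $H$ and the finiteness of $f(r,p,k)$ (via Theorem \ref{t-set} with $t=p$), which the paper leaves implicit.
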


\begin{proof}
Setting $m = f(r,p,k)$, in every orientation of $H(m,r)$ there is a $p$-set with all its degree-vector coordinates at least $k$. Let $\mathcal{F} = \{F_1, F_2, \ldots, F_t\}$ be a maximum $(n,m,p)$-packing. Consider every set $F_i$ with all its $p$-sets as a copy of $H(m,p)$ into $H(n,p)$, $1 \le i \le t$. Thus, any two copies of $H(m,p)$ have at most $p-1$ vertices in common. Now consider an orientation of $H(n,r)$ and the induced orientation on the copies of $H(m,r)$ defined on the vertex sets $F_1, F_2, \ldots, F_t$. Clearly, no $p$-set belongs to two copies of $H(m,p)$ defined on the same ground set of vertices. Also, as $p \le r$, no $r$-edge belongs to two copies of $H(m,r)$. Hence, in this orientation, each copy of $H(m,p)$ contributes with at least one $p$-set with all its degree-vector coordinates at least $k$. Since all these sets are distinct, it follows that $f(n,r,p,k) \ge t$.
Then, by R\"odl's Theorem, we obtain
\[f(n,r,p,k) \ge t \ge {n \choose p}{m \choose p}^{-1} (1 - o(1)),\]
where $m$ and $p$ are fixed and $n$ grows.
\end{proof}
 
 \section{Complexity of f(G,k)}
 
 Given a graph $G$ on $n$ vertices and a positive integer $k$, let us define the following problems.
 
 \noindent
 {\sc Independent Set}\\
 Does $G$ have an independent set $I$ with $|I| \ge k$?\\[2ex]
 \noindent
 {\sc Two Disjoint Independent Sets}\\
  Does $G$ have two disjoint independent sets $A$ and $B$ with $|A \cup B| \ge 2k$?\\[2ex]
 \noindent
 {\sc $f(G,1)$-orientation} \\
 Does $G$ have an orientation with at most $n-k$ vertices with indegree and outdegree at least $1$?
 
 It is well-known that the problem  {\sc Independent Set} is NP-complete.
 
 \begin{thm}
The problem {\sc $f(G,1)$-orientation} is NP-complete. 
 \end{thm}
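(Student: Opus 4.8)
The plan is to establish membership in NP first and then prove NP-hardness by a reduction from \textsc{Independent Set}, conceptually routed through \textsc{Two Disjoint Independent Sets}. Membership is immediate: an orientation $D$ of $G$ is a certificate of size polynomial in $|V(G)|+|E(G)|$, and given $D$ one checks in polynomial time whether at most $n-k$ vertices $v$ satisfy both $\deg^+(v)\ge 1$ and $\deg^-(v)\ge 1$. So a ``yes'' instance is verifiable, and the problem lies in NP.

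The key structural fact I would exploit for hardness is the exact formula already available in the excerpt. Applying Theorem \ref{f in M} with $r=2$ and $k=1$, together with the identity $\alpha_2(G)=M(G,0)$ noted before Corollary \ref{coro_graphs}, gives $f(G,1)=n-\alpha_2(G)$ for every graph $G$ on $n$ vertices. Consequently the query of \textsc{$f(G,1)$-orientation} on input $(G,k)$ --- namely whether $f(G,1)\le n-k$ --- is \emph{literally} the question $\alpha_2(G)\ge k$, i.e.\ whether $G$ contains two vertex-disjoint independent sets whose union has size at least $k$. Thus \textsc{$f(G,1)$-orientation} and \textsc{Two Disjoint Independent Sets} are the same decision problem up to a trivial rescaling of the threshold parameter, and it suffices to prove that deciding $\alpha_2(G)\ge k$ is NP-hard.

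To do so I would reduce from \textsc{Independent Set}. Given an instance $(G,k)$ with $n=|V(G)|$, build $G'$ from two vertex-disjoint copies $G_1,G_2$ of $G$ by inserting all edges between $V(G_1)$ and $V(G_2)$ (the complete join); this takes time $O(n^2)$. Because every cross edge is present, any independent set of $G'$ lies entirely inside one copy, so $\alpha(G')=\alpha(G)$. For two disjoint independent sets $A,B\subseteq V(G')$ there are two cases: if $A$ and $B$ lie in opposite copies then $|A\cup B|=|A|+|B|\le 2\alpha(G)$ with equality attainable, while if they lie in the same copy then $|A\cup B|\le \alpha_2(G)\le 2\alpha(G)$, using the trivial bound $\alpha_2(G)\le 2\alpha(G)$ recorded in the excerpt. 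Hence $\alpha_2(G')=2\alpha(G)$, so $G$ has an independent set of size $\ge k$ iff $\alpha_2(G')\ge 2k$, i.e.\ iff $f(G',1)\le 2n-2k$. Therefore $(G,k)\mapsto(G',2k)$ is a polynomial-time reduction: $G'$ admits an orientation with at most $2n-2k$ vertices of indegree and outdegree at least $1$ precisely when $\alpha(G)\ge k$.

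The routine portions are the NP-membership check and the cardinality bookkeeping for the join. The only step needing genuine care is the computation $\alpha_2(G')=2\alpha(G)$, and within it the verification that the ``same copy'' subcase cannot exceed $2\alpha(G)$; this is exactly where $\alpha_2(G)\le 2\alpha(G)$ is invoked. I do not expect a serious obstacle here: the conceptual point is simply that the exact identity $f(G,1)=n-\alpha_2(G)$ collapses the orientation problem onto the purely combinatorial two-disjoint-independent-sets problem, which the join construction pins to ordinary independence, yielding NP-completeness of \textsc{$f(G,1)$-orientation}.
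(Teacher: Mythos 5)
Your proposal is correct and takes essentially the same route as the paper: both hinge on the identity $f(G,1)=n-M(G,0)=n-\alpha_2(G)$ from Theorem \ref{f in M} and reduce \textsc{Independent Set} via the complete join of two copies of $G$ (the paper's $H=G\times K_2$). Your only departures are cosmetic --- you certify NP-membership with the orientation itself and establish the equivalence by computing $\alpha_2(G')=2\alpha(G)$ exactly, whereas the paper argues that the larger of the two disjoint independent sets has size at least $k$ and must lie inside one copy.
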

 
 \begin{proof}
Since $f(G,1) = n - M(G,0)$, to determine if $f(G,1) \le n-k$ is equivalent to prove that $M(G,0) \ge k$. Hence, we will show that the problem {\sc $f(G,1)$-orientation} is NP-complete by reducing the problem {\sc Independent Set} into the problem {\sc Two Disjoint Independent Sets}. Indeed given a graph $G$ on $n$ vertices and two vertex disjoint independent sets $A$ and $B$ claimed to have $|A \cup B| \ge 2k$, verification of independence, vertex-disjointness and cardinality of the union are easily checked in $O(n^2)$. Let $G$ and $k$ be an instance of the problem {\sc Independent Set}. Consider the graph $H = G \times K_2$ consisting of two copies of $G$ and all the edges between them. If $G$ has an independent  set $I$ of cardinality $|I| \ge k$, then $H$ has two disjoint independent sets whose union has cardinality  $2|I| \ge 2k$. Conversely, let $A$ and $B$ be two vertex disjoint independent sets of $H$ with $|A \cup B| \ge 2k$. Without loss of generality, say that $|A| \ge |B|$ and thus $|A| \ge k$. As, in $H$, every vertex of one copy of $G$ is adjacent to every vertex of the other copy of $G$, $A$ has to be fully contained in one of the two copies. Hence, $A$ is an independent set of $G$ with $|A| \ge k$. Thus, we have the desired reduction and, as the problem {\sc Independent Set} is NP-complete, it follows that the problems {\sc Two Disjoint Independent Sets} as well as {\sc $f(G,1)$-orientation} are NP-complete.
 \end{proof}
 
\section{Open problems}

When $G$ is a maximal planar graph on $n \ge 4$ vertices, we have shown that $2 \le f(G,1) \le n/2$, and both bounds are sharp. Since planar graphs are $5$-degenerate then, by Lemma \ref{lemma4} (2), $V(G)$ can be partitioned into two vertex disjoint sets $V_1$ and $V_2$ such that their induced subgraphs are $2$-degenerate. By Lemma \ref{lemma4} (1), $\mad(V_1) \le 4$ and $\mad(V_2) \le 4$ and hence $M(G,2) = n$ and $f(G,3) = n - M(G,2)=0$.  So every planar graph has an orientation for which every vertex $v$ has either $\deg^+(v) \ge 2$ or $\deg^-(v) \ge 2$. We are left with the problem of $f(G,2)$ for MP graphs, which is equivalent to the following problem: 

\begin{prob}
Does there exist an MP graph $G$ such that $V(G)$ can be partitioned into two vertex disjoint sets $A$ and $B$ such that $\mad(A) \le 2$ and $\mad(B) \le 2$? 
\end{prob}
 
There are examples of planar graphs which do not have such a partition with $\mad(A) < 2$ and $\mad(B) < 2$. For this, see the literature on the vertex arboricity of maximal planar graphs, for instance \cite{ChK, RW, Wo}. 
 
An easy upper bound for $f(G,2)$ comes from the fact that an MP graph $G$ can be vertex-partitioned into three subsets, each of one induces a forest (i.e. a $1$-degenerate subgraph). Let $V(G) = A  \cup B  \cup C$ be the such a partition, where $|A| \ge |B| \ge |C|$. Then $M(G,1) \ge 2n/3$ and hence $f(G,2) = n - M(G,1) \le n/3$. Another observation is that if there is a planar graph $G$ on $m$ vertices with $f(G,2) = q > 0$, then we can take $r$ copies of $G$ and complete them to a MP graph $H$ on $rm$ vertices. Then each copy of $G$ gives $q$ vertices with either $\deg^+(v) \ge 2$ or $\deg^-(v) \ge 2$. Hence $f(H,2) \ge rq = \frac{qn}{m}$. This gives that $\max \{f(G,2) \,:\, G \mbox{ n-vertex MP graph}\} \ge \frac{q}{m}n$. Hence either $\max \{f(G,2) \,:\, G \mbox{ n-vertex MP graph}\} = 0$ or there exists a constant $c > 0$ such that $\max \{f(G,2) \,:\, G \mbox{ n-vertex MP graph}\} \ge cn$. 
 
\begin{prob} 
Prove or disprove that $\max \{f(G,2) \,:\, G \mbox{ n-vertex MP graph}\} = 0$.
\end{prob}
 
\begin{prob} 
Find lower and upper bounds on $f(G,k)$ where $G$ belongs to the following families of graphs: claw-free graphs, $K_{1,r}$-free graphs, Line graphs, $K_3$-free graphs, series-parallel graphs, $k$-trees, etc.
 \end{prob}

\begin{prob}
Improve the bound $f(H,1,k) \ge  X(H) -  r(r(k-1) +1)$ or show sharpness  for $r> 2$.
\end{prob}
 
 \begin{prob}
Improve the bounds on $f(n,r,p,k)$ given in Section 4. 
\end{prob}

\begin{prob} 
Is it true that  $f(H,p,1)  = {n \choose p}  - \bp(H,p)$ for $2 \le  p \le r$? 
\end{prob}

\begin{prob}
Can we prove at least  $f(n,r,p,1)  = {n \choose p}  - \bp(H(n,r),p)$ for $2 \le  p \le r$?
\end{prob}

\begin{prob}
Is it possible to further generalize Hakimi's theorem from the case $r \ge 2$,  $p=1$ to  $r \ge p \ge 2$?
 \end{prob}

\end{document}